\newcounter{i}
\newcommand\numberthis{\addtocounter{equation}{1}\tag{\theequation}}
\newtheorem{theorem}{Theorem}[section]
\newtheorem{lemma}[theorem]{Lemma}
\newtheorem{proposition}[theorem]{Proposition}
\newtheorem{conjecture}[theorem]{Conjecture}
\newtheorem{question}[theorem]{Question}
\newtheorem{definition}[theorem]{Definition}
\newtheorem{procedure}[theorem]{Procedure}
\newtheorem{claim}[theorem]{Claim}
\newtheorem{subclaim}[theorem]{Subclaim}
\newenvironment{proofclaim}[1][]
    {\noindent \emph{Proof.} {}{#1}{}}{\hfill $\Diamond$\vspace{1em}}
\theoremstyle{plain} 
\newcommand{\thistheoremname}{}
\newtheorem{genericthm}[section]{\thistheoremname}
\newcommand{\sn}{N^{s}} 
\newcommand{\sd}{d^{s}}
\newcommand{\overbar}[1]{\mkern 1.5mu\overline{\mkern-1.5mu#1\mkern-1.5mu}\mkern 1.5mu}
\title{Colouring Graphs with Sparse Neighbourhoods:\\Bounds and Applications}
\author{Marthe Bonamy\thanks{CNRS, LaBRI, Universit\'e de Bordeaux, email: \texttt{marthe.bonamy@u-bordeaux.fr}}, Thomas Perrett\thanks{Technical University of Denmark, email: \texttt{tper@dtu.dk}. Supported by ERC Advanced Grant GRACOL, project number 320812.}, Luke Postle\thanks{University of Waterloo, email: \texttt{lpostle@uwaterloo.ca}. Partially supported by NSERC under Discovery Grant No. 2014-06162.}}
\date{\today}
\begin{document}
\maketitle
\begin{abstract}
Let $G$ be a graph with chromatic number $\chi$, maximum degree $\Delta$ and clique number $\omega$. Reed's conjecture states that $\chi \leq \lceil (1-\varepsilon)(\Delta + 1) + \varepsilon\omega \rceil$ for all $\varepsilon \leq 1/2$. It was shown by King and Reed that, provided $\Delta$ is large enough, the conjecture holds for $\varepsilon \leq 1/130,000$. In this article, we show that the same statement holds for $\varepsilon \leq 1/26$, thus making a significant step towards Reed's conjecture. We derive this result from a general technique to bound the chromatic number of a graph where no vertex has many edges in its neighbourhood. Our improvements to this method also lead to improved bounds on the strong chromatic index of general graphs. We prove that $\chi'_s(G)\leq 1.835 \Delta(G)^2$ provided $\Delta(G)$ is large enough.
\end{abstract}

\section{Introduction}

It is well known that the chromatic number $\chi(G)$ of a graph $G$ is bounded above by $\Delta(G) +1$, where $\Delta(G)$ denotes the maximum degree of $G$. Similarly, a trivial lower bound on $\chi(G)$ is given by the clique number $\omega(G)$, which is the largest number of pairwise adjacent vertices in $G$. In 1998, Reed conjectured that, up to rounding, the chromatic number of a graph is at most the average of these two bounds.

\begin{conjecture}\label{conj:Reeds}\emph{\cite{Reeds}}
If $G$ is a graph, then $\chi(G) \leq \lceil \frac12 (\Delta(G) + 1 + \omega(G)) \rceil$.
\end{conjecture}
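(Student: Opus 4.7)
The plan is to prove Reed's conjecture by a structure-versus-sparseness dichotomy combined with a probabilistic colouring, in the spirit of the approach initiated by Reed. Set the target bound $k=\lceil\tfrac12(\Delta+1+\omega)\rceil$, so that compared to the greedy $\Delta+1$ one must save exactly $s=\lfloor(\Delta+1-\omega)/2\rfloor$ colours per vertex. The first step would be a structural dichotomy: for a small parameter $\eta$, each vertex $v$ either lies in a \emph{near-clique} $K_v$ with $|K_v|\ge (1-\eta)(\Delta+1)$ inducing at most $\eta\binom{|K_v|}{2}$ non-edges, or has at least $\eta\binom{\Delta}{2}$ non-edges in its own neighbourhood. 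The parameter $\eta$ must be tuned so that both branches of the dichotomy admit the same saving $s$.

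The main steps, in order, are: (i) refine the dichotomy into a \emph{near-clique decomposition} of $V(G)$ into parts $K_1,\ldots,K_t$ together with a leftover sparse set $S$, so that each $K_i$ satisfies $|K_i|\le \omega+O(\eta\Delta)$ and each vertex of $S$ has many non-edges in its neighbourhood; (ii) construct a base partial colouring on $\bigcup K_i$ by first solving the \emph{quotient} instance whose vertices are the near-cliques, allotting each $K_i$ exactly $|K_i|$ colours and keeping $k-|K_i|\ge s-O(\eta\Delta)$ reserve colours for the later extension; (iii) extend the colouring to each $v\in S$ by a Lovász-Local-Lemma or semi-random nibble argument on the palette $[k]$, where the $\Omega(\eta\Delta^{2})$ non-edges in $N(v)$ guarantee a free colour with positive probability; (iv) modify the base colouring via local swaps within each $K_i$ so that every boundary vertex of $S$ can be extended, which amounts to finding a system of distinct representatives in a bipartite auxiliary graph.

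The main obstacle is step (iv), the interface between near-cliques and sparse vertices. A sparse vertex adjacent to several near-cliques can see nearly every colour on its clique-neighbours at once, threatening to consume the entire $s$-colour saving. Overcoming this for the tight constant $1/2$ likely requires a simultaneous probabilistic assignment of colours inside the near-cliques and on $S$, using an entropy-compression or algorithmic Local Lemma in which the $O(\eta\Delta)$ slack lost in step (i) is absorbed into the $O(\sqrt{\Delta\log\Delta})$ concentration error rather than accumulating multiplicatively. Steps (i)--(iii) are adaptations of tools already well-established in the area; it is the deterministic tightness of $s$ that constitutes the bottleneck between the current record and the exact statement of Reed's conjecture.
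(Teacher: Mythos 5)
This statement is Reed's Conjecture (Conjecture~\ref{conj:Reeds}), which is open; the paper does not prove it and does not claim to. What the paper establishes is only the $\varepsilon$-version: $\chi(G) \leq \lceil (1-\varepsilon)(\Delta+1) + \varepsilon\omega \rceil$ for $\varepsilon = 1/26$ and $\Delta$ sufficiently large (Theorem~\ref{thm:MainThmReeds}). Your text is a research plan rather than a proof, and you concede as much in step (iv), where the interface between near-cliques and sparse vertices is left to an unspecified ``entropy-compression or algorithmic Local Lemma'' argument. An acknowledged missing step at the bottleneck means there is no proof to evaluate; nothing in steps (i)--(iii) compensates for it.

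Beyond step (iv), the quantitative content of the sparse branch is nowhere near the saving $s \approx \tfrac12(\Delta+1-\omega)$ you need, and this is a genuine obstruction, not a technicality. The best known answer to Question~\ref{ques:2} (including the improvement in this paper, Theorem~\ref{thm:SparsityApprox}) gives savings of roughly $0.3012\,\delta\,\Delta$ with $\delta \le 1$, so even a maximally sparse neighbourhood buys well under $\tfrac12\Delta$; and the density step (Theorem~\ref{thm:density}) only yields sparsity $\delta = \tfrac12(\alpha-2\varepsilon)^2$, which degrades quadratically as the target saving grows, so chaining these two steps can never reach $\varepsilon = 1/2$ --- this is precisely why the paper lands at $1/26$. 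Your step (iii) asserts that $\Omega(\eta\Delta^2)$ non-edges in $N(v)$ ``guarantee a free colour,'' but a free colour per vertex is not the issue: one must save $s$ colours simultaneously at every vertex, and with positive probability globally, which is exactly what the naive-colouring machinery quantifies and where the constants fall short. Finally, all known results of this type (including the paper's) require $\Delta$ large, whereas the conjecture is stated for all graphs, so even a successful execution of your plan would need a separate argument for small $\Delta$.
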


As evidence for his conjecture, Reed proved that the chromatic number can be bounded above by a non-trivial convex combination of $\omega$ and $\Delta+1$.

\begin{theorem}\label{thm:KingReedEpsilon}\emph{\cite{Reeds}}
There exists $\varepsilon > 0$ such that for every graph $G$, we have $\chi(G) \leq \lceil (1-\varepsilon)(\Delta(G) + 1) + \varepsilon \omega(G)  \rceil$.
\end{theorem}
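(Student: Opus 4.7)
The plan is to follow the standard strategy for this kind of result: fix a minimum counterexample and exploit a dichotomy between ``dense'' and ``sparse'' vertices of $G$. The first step is a reduction to the case where $\Delta(G)$ is large. If $\varepsilon$ is chosen small enough in terms of some fixed constant $\Delta_0$, say $\varepsilon < 1/(\Delta_0+1)$, then for every graph with $\Delta \leq \Delta_0$ the bound $\lceil(1-\varepsilon)(\Delta+1)+\varepsilon\omega\rceil$ equals the trivial bound $\Delta+1$, so the statement follows from greedy colouring. Hence we may assume $\Delta(G) \geq \Delta_0$ for a large constant $\Delta_0$, and we may also assume $G$ is vertex-critical with respect to the putative chromatic number.

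Next, I would partition $V(G)$ into dense and sparse classes: a vertex $v$ is sparse if the number of edges in the subgraph induced on $N(v)$ is at most $(1-\eta)\binom{\Delta}{2}$, for a suitably chosen $\eta=\eta(\varepsilon)$, and dense otherwise. Sparse vertices carry the ``saving'' because, in a uniformly random proper colouring of $N(v)$, one expects several colours to repeat on non-adjacent pairs, so effectively fewer than $\Delta$ distinct colours are used on $N(v)$. For the dense vertices, a structural lemma (similar in spirit to Reed's) shows that each connected component of the subgraph induced on the dense vertices is close to a clique of order near $\Delta+1$; combining these yields a ``dense decomposition'' of $V(G)$ into near-cliques together with a remainder of sparse vertices.

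The colouring itself is then constructed in two phases. First, handle the dense decomposition by assigning a palette to each near-clique so that few new colours are introduced per clique, using Hall-type matching arguments to bundle small near-cliques together or to share colours between cliques whose sizes are strictly smaller than $\Delta+1$ (this is where the improvement over $\Delta+1$ ultimately comes from on the dense side, since each near-clique has size at most $\omega$). Second, extend the colouring to the sparse vertices via a random process consisting of random shuffles within each near-clique combined with random choices on the remaining vertices, and apply the Lov\'asz Local Lemma to deduce that with positive probability every sparse vertex sees at most $\lceil(1-\varepsilon)(\Delta+1)+\varepsilon\omega\rceil-1$ distinct colours in its neighbourhood, so that a valid colour remains for it.

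The main obstacle will be the probabilistic step. One needs a concentration inequality, typically Talagrand's, to prove that the number of colours appearing in $N(v)$ for a sparse vertex $v$ is, with high probability, smaller than $\Delta$ by roughly $\varepsilon(\Delta-\omega)$; and one then has to control the local dependencies between these bad events well enough to invoke the Local Lemma. The delicate balancing act lies in calibrating the density threshold $\eta$ and the constant $\varepsilon$ so that the deterministic argument over the dense components and the probabilistic argument over the sparse vertices go through simultaneously, which is precisely what determines how small $\varepsilon$ must be in Theorem~\ref{thm:KingReedEpsilon}.
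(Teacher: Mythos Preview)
Your sketch is essentially Reed's original 1998 proof strategy, and it is a valid route to Theorem~\ref{thm:KingReedEpsilon}. However, the paper does not follow this route; it follows the later King--Reed simplification, and the difference is worth noting.

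In your approach the dense/sparse dichotomy lives at the level of individual vertices, and you must carry a structural ``dense decomposition'' (near-cliques) alongside the probabilistic colouring of the sparse vertices, managing the interaction between the two in the random-extension step. The paper instead eliminates dense vertices entirely before any probabilistic argument begins. Using King's theorem (Theorem~\ref{thm:King}) that any graph with $\omega > \tfrac{2}{3}(\Delta+1)$ contains an independent set meeting every maximum clique, one repeatedly removes such a maximal independent set; each removal drops $\Delta$, $\omega$, and $\chi$ by one, so a minimum counterexample may be assumed to satisfy $\omega \le \tfrac{2}{3}(\Delta+1)$. A short criticality argument (Theorem~\ref{thm:density} in the paper) then shows that \emph{every} vertex of such a graph is $\delta$-sparse for some explicit $\delta>0$. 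At this point there are no dense vertices at all, so the Hall-type matching and the near-clique structural lemma in your outline are unnecessary: one simply applies the naive colouring procedure plus the Local Lemma uniformly (Theorem~\ref{thm:SparsityApprox}).

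What each buys: your approach is self-contained and does not need King's hitting-set theorem, but the dense decomposition and the two-phase colouring are technically heavier. The paper's approach trades that machinery for an appeal to Theorem~\ref{thm:King}, after which the problem reduces cleanly to colouring a globally $\delta$-sparse graph; this modularity is also what allows the paper to push $\varepsilon$ up to $1/26$ by improving the two modules separately.
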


King and Reed~\cite{KingReedShort} subsequently gave a shorter proof of Theorem~\ref{thm:KingReedEpsilon} by exploiting a recent result of King~\cite{KingIndep} on independent sets hitting every maximal clique. Using King's result, it suffices to prove Theorem~\ref{thm:KingReedEpsilon} for graphs $G$ with clique number $\omega(G) \leq \frac{2}{3}(\Delta(G) +1)$. Given this fact, there are two main steps in the proof of King and Reed. The first is to show that if such a graph is also critical, then no neighbourhood contains many edges. More precisely, there exists $\delta >0$ such that every neighbourhood induces at most $(1-\delta){\Delta(G) \choose 2}$ edges. We say that such a graph is \emph{$\delta$-sparse}. The second step is to invoke the naive colouring procedure and the probabilistic method to colour the graph. Indeed, using these techniques, it can be shown that a $\delta$-sparse graph is $(1-\varepsilon)(\Delta(G) + 1)$-colourable for some $\varepsilon > 0$ depending on $\delta$. This completes the proof.

Seeking only a short proof of Theorem~\ref{thm:KingReedEpsilon}, King and Reed did not optimise the two steps of their method. Approximately, they find that $\delta = 1/160$ and $\varepsilon < 1/320e^6$ suffice. However, since Reed's Conjecture is equivalent to proving Theorem~\ref{thm:KingReedEpsilon} for $\varepsilon \leq 1/2$, it is natural to ask if one can increase the value of $\varepsilon$ obtained. It would suffice to provide an improved answer to any of the two following questions. Recall first that a graph $G$ is \emph{ $(k+1)$-critical} if $G$ is not $k$-colorable but every proper subgraph of $G$ is.

\begin{question}\label{ques:1}
Let $G$ be a $\lfloor (1-\varepsilon)(\Delta(G)+1) \rfloor + 1$-critical graph with $\omega(G) \leq (1-\alpha)(\Delta(G)+1)$. What is the largest $\delta = \delta(\varepsilon,\alpha)$, such that $G$ is $\delta$-sparse?
\end{question}

\begin{question}\label{ques:2}
Let $G$ be a $\delta$-sparse graph. What is the largest $\varepsilon = \varepsilon(\delta)$ such that $\chi(G) \leq (1-\varepsilon)(\Delta+1)$?
\end{question}

\subsection{Main Results}

In this paper we improve on the best known results for both of these questions. In fact, we prove results in the context of list colouring, a generalization of colouring. A \emph{list assignment} is a function that to each vertex $v\in V(G)$ assigns a nonempty set $L(v)$ of colours. An \emph{$L$-colouring} is a coloring $\phi$ of $G$ such that $\phi(v)\in L(v)$ for every $v\in V(G)$. A \emph{$k$-list-assignment} is a list assignment $L$ such that $|L(v)|\ge k$ for every $v\in V(G)$. A graph $G$ is \emph{$k$-list-colourable} if $G$ has an $L$-coloring for every $k$-list-assignment $L$. The \emph{list chromatic number} of $G$, denoted $\chi_{\ell}(G)$ is the minimum $k$ such that $G$ is $k$-list-colourable. We say a graph $G$ is \emph{$L$-critical} with respect to a list assignment $L$ if $G$ does not have an $L$-colouring but every proper subgraph of $G$ does.

In response to Question~\ref{ques:1}, we prove the following theorem.

\begin{theorem}\label{thm:density}
Let $\varepsilon, \alpha > 0$ such that $\varepsilon < \frac{\alpha}{2}$. If $G$ is $L$-critical with respect to some $ \lceil (1-\varepsilon)(\Delta(G) +1) \rceil$-list-assignment $L$ and $\omega(G) \leq (1-\alpha)(\Delta(G)+1)$, then $G$ is $\frac{(\alpha-2 \varepsilon)^2}2$-sparse.
\end{theorem}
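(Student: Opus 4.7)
The plan is to fix an arbitrary vertex $v \in V(G)$ and show that $|E(G[N(v)])| \leq (1-\delta)\binom{\Delta}{2}$ with $\delta := (\alpha - 2\varepsilon)^2/2$. By $L$-criticality, every vertex $u$ of $G$ satisfies $\deg(u) \geq |L(u)| \geq \lceil(1-\varepsilon)(\Delta+1)\rceil$, because otherwise an $L$-coloring of $G - u$ would leave a free color at $u$ and extend. Setting $d := \deg(v)$, we have $d \geq (1-\varepsilon)(\Delta+1)$. If $d$ is noticeably less than $\Delta$ — specifically when $\binom{d}{2} \leq (1-\delta)\binom{\Delta}{2}$, i.e.\ $d$ is bounded away from $\Delta$ by roughly $(\alpha-2\varepsilon)\Delta/\sqrt{2}$ — then trivially $|E(G[N(v)])| \leq \binom{d}{2}$ and we are done, so we assume $d$ is close to $\Delta$ and hence that $|N(v)|$ is almost $\Delta$.

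Next, fix an $L$-coloring $\phi$ of $G - v$, which exists by criticality. Since $G$ has no $L$-coloring, every color of $L(v)$ must be used on $N(v)$. Setting $U_c = \phi^{-1}(c) \cap N(v)$, we have $|U_c| \geq 1$ for every $c \in L(v)$ and $\sum_{c \in L(v)} |U_c| \leq d$, so a straightforward double counting gives that at least $2|L(v)| - d \geq (1-2\varepsilon)(\Delta+1)$ colors in $L(v)$ appear on exactly one vertex of $N(v)$. Let $A \subseteq N(v)$ denote the corresponding set of ``singleton'' vertices, so $|A| \geq (1-2\varepsilon)(\Delta+1)$. Because any clique of $G[A]$ together with $v$ is a clique of $G$, the bound $\omega(G) \leq (1-\alpha)(\Delta+1)$ gives $\omega(G[A]) \leq (1-\alpha)(\Delta+1) - 1$; thus $A$ exceeds the maximum-clique size of $G[A]$ by at least $(\alpha - 2\varepsilon)(\Delta+1) + 1$. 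Furthermore, for each $u \in A$ with $\phi(u) = c$, a standard Kempe-chain argument forces $L(u) \setminus \{c\} \subseteq \phi(N(u))$: otherwise recoloring $u$ to some $c' \in L(u)$ unused by its neighbors would free $c$ at $v$ and produce an $L$-coloring of $G$.

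The main obstacle is to convert these ingredients into a lower bound on the number of non-edges in $G[N(v)]$ of order $(\alpha-2\varepsilon)^2\binom{\Delta}{2}$. Applying Tur\'an's theorem directly to $G[A]$ only yields a linear-in-$\Delta$ number of non-edges, whereas a quadratic bound is needed. The quadratic bound must therefore come from combining the tight Kempe-chain constraint on each $u \in A$ with the size of $A$: the goal is to show that each of the roughly $(\alpha-2\varepsilon)\Delta$ ``excess'' vertices of $A$ is involved in roughly $(\alpha-2\varepsilon)\Delta/2$ distinct non-edges of $G[N(v)]$, for instance by pairing each excess vertex with a collection of blockers to Kempe swaps, or by averaging the singleton-color decomposition over many colorings $\phi$. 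Summing these contributions (and dividing by two to account for double counting) then yields the claimed $\delta\binom{\Delta}{2}$ non-edges, equivalent to the stated sparsity.
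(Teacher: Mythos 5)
Your write-up sets up correct preliminary facts (minimum degree at least $|L(v)|$ in an $L$-critical graph, the reduction to $d(v)$ close to $\Delta$, the existence of many ``singleton'' colours on $N(v)$ under a colouring of $G-v$, and the recolouring constraint $L(u)\setminus\{\phi(u)\}\subseteq \phi(N(u))$ for a singleton vertex $u$), but the theorem is not proved: the entire quadratic lower bound on the number of non-edges in $G[N(v)]$ is left as a stated goal. You yourself note that Tur\'an applied to $G[A]$ gives only $O(\Delta)$ non-edges, and the proposed remedies (``pairing each excess vertex with blockers to Kempe swaps'' or ``averaging over many colourings $\phi$'') are not arguments but hopes; it is not at all clear that the singleton/recolouring information, which constrains $\phi(N(u))$ over all of $G$, can be converted into $\Theta\bigl((\alpha-2\varepsilon)^2\Delta^2\bigr)$ non-adjacencies \emph{inside} $N(v)$. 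Since this conversion is exactly the content of the theorem, the proposal has a genuine gap at its core.

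For comparison, the paper obtains the quadratic count by a different mechanism. First it proves that in an $L$-critical graph every induced subgraph $H$ satisfies $\delta(H) < \Delta - k + \chi_\ell(H)$ (delete $V(H)$, colour the rest by criticality, and list-colour $H$ with the residual lists), and separately that $\chi_\ell(H) \leq \frac12\bigl(|V(H)|+\omega\bigr)$, via a large matching in the complement together with the Erd\H{o}s--Rubin--Taylor theorem on complete multipartite graphs with parts of size at most two. Then, taking a minimum-degree ordering $v_1,\dots,v_\Delta$ of $N(v)$ (padded with isolated vertices to size $\Delta$) and writing the number of non-edges as $\sum_i\bigl(|V(H_i)|-1-d_{H_i}(v_i)\bigr)$ where $H_i$ is induced by $\{v_i,\dots,v_\Delta\}$, each term is at least roughly $2k-\Delta-\omega-i+1$; summing this linear-in-$i$ deficit over $i$ produces the triangular number $\frac12\binom{2k-\Delta-\omega+1}{2}$, which is where the $(\alpha-2\varepsilon)^2\binom{\Delta}{2}/2$ bound comes from. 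If you want to salvage your approach, you would need an analogue of this ``many vertices each missing many neighbours'' step, and the cleanest known route is precisely the minimum-degree-ordering argument rather than an analysis of a single colouring of $G-v$.
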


Note that this implies the same result for $\lfloor (1-\varepsilon)(\Delta(G) +1) \rfloor + 1$-critical graphs. King and Reed~\cite{KingReedShort} showed that if $G$ is a $\lfloor (1-\varepsilon)(\Delta(G) +1) \rfloor + 1$-critical graph with clique number $\omega(G) \leq \frac{2}{3}(\Delta(G) + 1)$, then $G$ is $\delta$-sparse provided $\delta < \frac{1}{4}(\frac{1}{6}-\varepsilon)^2$. Setting $\alpha = 1/3$ in Theorem~\ref{thm:density}, our bound gives $\delta = 2(\frac{1}{6}-\varepsilon)^2$, an eightfold improvement.

Question~\ref{ques:2} is a well studied problem. Molloy and Reed~\cite{MolloyReedSCI} proved that, for $\delta \in [0, 0.9]$, one may take $\varepsilon(\delta) =  0.0238\delta$ provided that the maximum degree is large enough. More recently, with the same conditions, Bruhn and Joos~\cite{BruhnJoos} improved this to $\varepsilon(\delta) = 0.1827 \delta - 0.0778\delta^{3/2}$. These bounds are approximations of more complicated expressions, see~\cite{MolloyReedSCI} and~\cite{BruhnJoos} respectively. Both of these results are proved using a single application of the naive colouring procedure, a randomised colouring technique which generates a partial proper colouring of a $\delta$-sparse graph. In this article, we develop an iterative version and using this we improve the bound of Bruhn and Joos by a factor of $\sqrt{e} \approx 1.6487$ as follows.

\newcounter{DeltaSparsity}
\setcounter{DeltaSparsity}{\thei}
\addtocounter{i}{1}
\begin{theorem}\label{thm:SparsityApprox}
Let $G$ be a $\delta$-sparse graph with $\delta \in [0,0.9]$, and let $\varepsilon = 0.3012 \delta - 0.1283 \delta^{3/2}$. There exists $\Delta_\theDeltaSparsity(\delta)$ such that if $\Delta(G) > \Delta_\theDeltaSparsity(\delta)$, then $\chi(G) \leq \chi_{\ell}(G) \leq (1-\varepsilon)(\Delta(G)+1)$.
\end{theorem}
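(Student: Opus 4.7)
The plan is to prove the stronger list colouring statement by an iterative wasteful-colouring scheme in the spirit of Molloy--Reed, rather than the single-round procedure used by Bruhn and Joos. We build a sequence of partial $L$-colourings: after round $i$, each still-uncoloured vertex $v$ has a residual list $L_i(v)$ of size $\ell_i(v)$ and a residual uncoloured-degree $d_i(v)$, and we maintain the invariant that $\ell_i(v) - d_i(v)$ exceeds a prescribed slack $s_i(\Delta+1)$ that grows with $i$. At each round, every uncoloured vertex is activated independently with a small probability $p$, activated vertices tentatively pick a uniformly random colour from their current list, and an attempt is \emph{kept} only if no neighbour attempts the same colour; kept colours are then removed from the lists of neighbours. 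Once $s_i$ reaches a small absolute constant, the residual graph is degree-choosable from its lists and we finish greedily.

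The engine of the proof is a one-round expectation calculation. Straightforwardly, $\mathbb{E}[\ell_{i+1}(v)] \approx (1-p\,\mathrm{e}^{-p})\,\ell_i(v)$, whereas $d_i(v)$ shrinks faster than $\ell_i(v)$ by an amount that is controlled by the sparsity: each non-edge in $N(v)$ contributes an additional chance that two neighbours of $v$ both keep a colour, which decreases $\mathbb{E}[d_{i+1}(v)]$ by a genuine $\Omega(\delta)$ factor relative to $\mathbb{E}[\ell_{i+1}(v)]$. The slack ratio $s_i$ therefore satisfies a recurrence of the form $s_{i+1} \approx s_i + c\,\delta\,p\,\mathrm{e}^{-p} - c'\,\delta^{3/2}p\,\mathrm{e}^{-p}$, which, summed over $\Theta(\log \Delta)$ rounds, integrates to a total slack that is exactly $\sqrt{\mathrm{e}}$ times what one gets from a single round with the optimised $p$; this is the source of the coefficients $0.3012$ and $0.1283$.

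For each round, concentration of $\ell_i(v)$ and $d_i(v)$ around their expectations follows from Talagrand's inequality, since both are Lipschitz and certifiable functions of the independent colour choices and the independent activation bits; the Lovász Local Lemma, applied to the family of bad events ``$\ell_i(v)$ or $d_i(v)$ deviates from its expectation by more than a tiny margin,'' gives with positive probability a round in which the invariant is preserved everywhere simultaneously, provided $\Delta$ is large enough to absorb the polylog error terms.

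The main obstacle will be carrying the sparsity hypothesis through the iterations: after one round the residual graph need not be $\delta$-sparse in the original, unweighted sense, because the non-edges in $N(v)$ that powered the saving may lose one of their endpoints. The fix is to maintain a \emph{weighted} sparsity invariant --- roughly, the sum over non-edges $uu' \subseteq N(v)$ of the probabilities that both $u$ and $u'$ are still uncoloured --- and to show, again using Talagrand concentration, that this weighted quantity evolves compatibly with the slack recurrence above. Designing this invariant, choosing the activation schedule $p_i$ to solve the associated differential equation, and verifying that all Talagrand-LLL preconditions hold throughout the iteration is the calculational core of the proof.
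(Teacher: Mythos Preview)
Your overall strategy---iterate a random partial colouring, track the slack $\ell_i(v)-d_i(v)$, use $\delta$-sparsity to show the slack grows, finish greedily---matches the paper. But two concrete choices in your proposal diverge from the paper in ways that break the argument for the stated constants.

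First, the paper does \emph{not} use a nibble with small activation probability. In each round \emph{every} uncoloured vertex picks a colour (Procedure~\ref{proc:ncp}); a vertex retains its colour with probability $(1-\tfrac{1}{2k})^{\Delta}\approx e^{-1/2}$ (Proposition~\ref{prop:prob}). The uncoloured fraction is therefore $\mu\approx 1-e^{-1/2}$, the residual maximum degree shrinks geometrically by the factor $\mu$, and the Bruhn--Joos one-round saving, summed over the resulting geometric series, is multiplied by $\sum_{i\ge 0}\mu^i=1/(1-\mu)=e^{1/2}$. That is the entire source of the factor $\sqrt{e}$ and hence of the constants $0.3012=\sqrt{e}\cdot 0.1827$ and $0.1283=\sqrt{e}\cdot 0.0778$. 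Your scheme with small $p$ does not reproduce this: the per-round contribution of coloured \emph{pairs} in $N(v)$ scales like $p^2$ while the per-round list loss scales like $p$, so the ratio of saving to progress is $O(p)$ and vanishes as $p\to 0$. Independently, the recurrence you wrote, $s_{i+1}\approx s_i+(c\delta-c'\delta^{3/2})p e^{-p}$, has increments that do not depend on $i$; summing it over $\Theta(\log\Delta)$ rounds gives something growing with $\Delta$, not a fixed multiple of the one-round bound. The sentence ``integrates to a total slack that is exactly $\sqrt{e}$ times what one gets from a single round with the optimised $p$'' is not supported by the recurrence you stated.

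Second, your proposed ``weighted sparsity invariant'' is more elaborate than what is needed and, as written, is only a slogan. The paper's mechanism is the real new idea and is cleaner: one shows that the uncoloured set is $\mu$-\emph{quasirandom} in the sense of Definition~\ref{def:mu-quasirandom} (each common neighbourhood meets it in the expected $\mu$-fraction, up to $\sqrt{\Delta}\,(\log\Delta)^5$), and then proves directly (Lemma~\ref{lem:stablesparsity}) that a $\mu$-quasirandom subgraph of a $\delta$-sparse graph is literally $\delta'$-sparse for any fixed $\delta'<\delta$. This lets one re-apply the same one-round lemma verbatim to the residual graph, with no weighted bookkeeping. Because the maximum degree drops by a fixed constant factor each round, a \emph{constant} number of iterations (independent of $\Delta$) already suffices, which also keeps the accumulated concentration errors under control; the $\Theta(\log\Delta)$ rounds you propose are neither necessary nor obviously compatible with the error budget.
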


In fact, we prove Theorem~\ref{thm:SparsityApprox} in the setting of correspondence colouring defined in Section~\ref{sec:sparsity}, a generalization of list colouring. The use of correspondence colouring allows us to simplify some of the intricacies in the proof and is quite natural in this setting.

This paper is not the first to consider an iterative application of the naive colouring procedure. Indeed, the notable result of Johansson~\cite{Johansson}, which states that triangle-free graphs satisfy $\chi(G) \leq O(\Delta(G)/\log \Delta(G))$ is proved in this way, see also~\cite{MolloyReedBook}. Triangle-free graphs behave particularly nicely with respect to an iterative version because, for any partial colouring, the subgraph induced by the uncoloured vertices is still triangle-free. We should briefly remark however that the method of Johannson~\cite{Johansson} is somewhat different in the sense that the procedure is only applied to a fraction of the vertices in each step. In this case the technique is often called the \emph{semi random method} or \emph{R\"odl nibble} and can be traced back to~\cite{AKS,Rodl}.

In this paper, we show that for $\delta$-sparse graphs, the naive colouring procedure can generate a partial colouring with the additional property that the uncoloured subgraph $G'$ is almost $\delta$-sparse (see Lemma~\ref{lem:stablesparsity}). This is the key which allows us to apply the procedure iteratively to the uncoloured subgraph. In addition, the probability that a vertex remains coloured is about $e^{-1/2}$ (see Proposition~\ref{prop:prob}) and hence the probability a vertex is in $G'$ is about $p=1-e^{-1/2}$. After one iteration, Bruhn and Joos had shown that the difference between the maximum degree of $G'$ and the resulting list sizes had decreased by at least $(0.1827 \delta - 0.0778\delta^{3/2})\Delta(G)$; if that was the initial difference, then we could greedily colour $G'$ to finish. However, given the key lemma that $G'$ is almost $\delta$-sparse, we may apply the procedure again. In each step, we accrue a new savings proportional to the current maximum degree. Terminating this procedure ad infinitum would result in roughly the following savings:

\begin{align*}
(0.1827 \delta - 0.0778\delta^{3/2})\Delta(G)(1+p+p^2 + p^3 + \ldots) &= (0.1827 \delta - 0.0778\delta^{3/2})\Delta(G)\frac{1}{1-p}\\
 &=e^{1/2}(0.1827 \delta - 0.0778\delta^{3/2})\Delta(G)\\
&\approx 0.3012 \delta - 0.1283 \delta^{3/2}
\end{align*}

Of course, we cannot carry out this iteration indefinitely, but after four iterations, we have saved as much as claimed in Theorem~\ref{thm:SparsityApprox}. For technical reasons, we adopt a different perspective in the proof of Theorem~\ref{thm:SparsityApprox}, wherein we study the ratio of maximum degree to list size and show that as long as this ratio is at most that of Theorem~\ref{thm:SparsityApprox}, then the ratio will slowly decrease after each iteration until it falls below $1$ whereupon we finish by colouring greedily.

By using Theorem~\ref{thm:density} and Theorem~\ref{thm:SparsityApprox} together with the technique of King and Reed, we obtain that the $\varepsilon$-version of Reed's Conjecture holds for $\varepsilon = 1/26$.

\newcounter{Deltamain}
\setcounter{Deltamain}{\thei}
\addtocounter{i}{1}
\begin{theorem}\label{thm:MainThmReeds}
There exists $\Delta_\theDeltamain >0$ such that if $G$ is a graph of maximum degree $\Delta > \Delta_\theDeltamain$ and clique number $\omega$, then $\chi(G) \leq \lceil \frac{25}{26}(\Delta + 1) + \frac1{26} \omega \rceil$.
\end{theorem}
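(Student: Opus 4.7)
The plan is to combine the technique of King and Reed with our new Theorems~\ref{thm:density} and~\ref{thm:SparsityApprox}. Suppose for contradiction that $G$ is a minimum (edge-critical) counterexample, so $\chi(G) > k$ where $k = \lceil \tfrac{25}{26}(\Delta(G)+1) + \tfrac{1}{26}\omega(G)\rceil$. Following~\cite{KingReedShort}, we use King's theorem on independent sets hitting every maximum clique to reduce to the regime $\omega(G) \le \tfrac{2}{3}(\Delta(G)+1)$: otherwise we peel off such an independent set as one colour class and finish by induction on the remaining smaller graph. Since $G$ is $(k{+}1)$-critical, it is $L$-critical with respect to the constant $k$-list-assignment $L$, and in this regime the parameters satisfy
\[
k \;\le\; \left\lceil \left(1-\tfrac{1}{78}\right)(\Delta(G)+1)\right\rceil.
\]

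Now apply Theorem~\ref{thm:density} with parameters $\alpha = 1/3$ and $\varepsilon = 1/78$ (the hypothesis $\varepsilon < \alpha/2$ holds with room to spare). We conclude that $G$ is $\delta$-sparse with
\[
\delta \;=\; \tfrac{1}{2}\bigl(\tfrac{1}{3} - \tfrac{1}{39}\bigr)^{2} \;=\; \tfrac{1}{2}\bigl(\tfrac{4}{13}\bigr)^{2} \;=\; \tfrac{8}{169}.
\]
Feeding this $\delta$ into Theorem~\ref{thm:SparsityApprox}, and taking $\Delta_\theDeltamain$ large enough that $\Delta(G) > \Delta_\theDeltaSparsity(\delta)$, we obtain $\chi(G) \le \chi_{\ell}(G) \le (1-\varepsilon')(\Delta(G)+1)$ with
\[
\varepsilon' \;=\; 0.3012\,\delta \;-\; 0.1283\,\delta^{3/2} \;\approx\; 0.3012\cdot 0.04734 \;-\; 0.1283\cdot 0.01030 \;\approx\; 0.01294 \;>\; \tfrac{1}{78}.
\]
Hence $\chi(G) < k$, contradicting the choice of $G$.

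The crux, and the reason for the specific constant $\tfrac{1}{26}$, is precisely the final numerical inequality $\varepsilon' > \tfrac{1}{78}$. More generally, replacing $\tfrac{1}{26}$ by some $\varepsilon_0$ in the statement requires
\[
0.3012\,\delta(\varepsilon_0) \;-\; 0.1283\,\delta(\varepsilon_0)^{3/2} \;\ge\; \tfrac{\varepsilon_0}{3}, \qquad \delta(\varepsilon_0) \;=\; \tfrac{1}{2}\bigl(\tfrac{1}{3} - \tfrac{2\varepsilon_0}{3}\bigr)^{2},
\]
and $\varepsilon_0 = \tfrac{1}{26}$ is essentially the largest value for which this holds; any further improvement of the constant would have to come from sharpening Theorem~\ref{thm:density} or Theorem~\ref{thm:SparsityApprox}, or from strengthening the King--Reed reduction to allow $\alpha > 1/3$. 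A secondary, technical point is that the iterated King--Reed peeling should leave a graph of sufficiently large maximum degree for Theorem~\ref{thm:SparsityApprox} to apply; this is routine, since each peeling step decreases $\Delta$ by at most a constant, and a large enough choice of $\Delta_\theDeltamain$ absorbs this loss.
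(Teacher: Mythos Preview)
Your overall strategy and the numerical verification are the same as the paper's, and the computation with $\alpha=1/3$, $\varepsilon=1/78$, $\delta=8/169$ leading to $0.3012\,\delta-0.1283\,\delta^{3/2}\approx 0.01294>1/78$ is exactly the crux of the argument (this is the paper's Lemma~\ref{lem:epsilon(alpha,delta)} specialised to $\alpha=1/3$).

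There is, however, a genuine gap in your final paragraph. You write that the King--Reed peeling is harmless because ``each peeling step decreases $\Delta$ by at most a constant, and a large enough choice of $\Delta_\theDeltamain$ absorbs this loss.'' This is false: while each peel decreases $\Delta$ by at least~$1$, the \emph{number} of peels needed to reach $\omega\le\frac23(\Delta+1)$ is not bounded by an absolute constant; it can be $\Theta(\Delta)$ (take $\omega(G)$ close to $\Delta(G)$). Equivalently, in your minimal-counterexample formulation, one peel drops $\Delta$ to something that is only guaranteed to exceed $\omega(G)-2>\tfrac23\Delta(G)-2$, which may already lie at or below $\Delta_\theDeltamain$, and then the induction hypothesis no longer applies. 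No choice of $\Delta_\theDeltamain$ repairs this.

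The paper closes this gap by bringing in one extra ingredient you omit: Reed's original theorem (Theorem~\ref{th:reedo} here), which gives $\chi(G)\le\lceil\frac12(\Delta+1+\omega)\rceil$ whenever $\omega(G)\ge(1-\tfrac{1}{7\cdot10^7})\Delta(G)$ and $\Delta$ is large. This disposes of the regime $\omega(G)>\Delta(G)-2\Delta_\theDeltaSparsity$ outright. In the remaining regime $\omega(G)\le\Delta(G)-2\Delta_\theDeltaSparsity$, one peels iteratively to a graph $G''$ with $\omega(G'')\le\frac23(\Delta(G'')+1)$ and then splits into two cases: if $\Delta(G'')>\Delta_\theDeltaSparsity$ your sparsity argument applies; if $\Delta(G'')\le\Delta_\theDeltaSparsity$ one uses the crude bound $\chi(G)\le\Delta_\theDeltaSparsity+p\le\Delta_\theDeltaSparsity+\omega(G)$ together with $\omega(G)\le\Delta(G)-2\Delta_\theDeltaSparsity$ to finish. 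You should incorporate this case analysis (or an equivalent workaround) rather than dismissing it as routine.
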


\subsection{The Strong Chromatic Index}

The \emph{strong chromatic index}, $\chi'_s(G)$, of a graph $G$ is defined as the least integer $k$ for which there exists a $k$-colouring of $E(G)$ such that edges at distance at most $2$ receive different colours. Equivalently, $\chi'_s(G) = \chi (L^2(G))$, where $L^2(G)$ denotes the square of the line graph of $G$. Since $\Delta(L^2(G)) < 2\Delta(G)^2$, the trivial upper bound on the chromatic number gives that $\chi'_s(G) \leq 2\Delta(G)^2$. However Erd\H{o}s and Ne\v{s}et\v{r}il conjectured a much stronger upper bound, see~\cite{conj1.25}.

\begin{conjecture}\label{conj:1.25}
If $G$ is a graph, then $\chi'_s(G) \leq 1.25 \Delta(G)^2$.
\end{conjecture}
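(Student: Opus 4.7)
The plan is to attack $\chi'_s(G) = \chi(L^2(G))$ through the sparsity framework established by Theorem~\ref{thm:SparsityApprox}. Since $\Delta(L^2(G)) \leq 2\Delta(G)(\Delta(G)-1)$, a bound of the form $\chi(L^2(G)) \leq (1-\varepsilon)(\Delta(L^2(G))+1)$ translates into $\chi'_s(G) \lesssim 2(1-\varepsilon)\Delta(G)^2$; to reach $1.25\Delta(G)^2$ one needs to save a fraction $\varepsilon \approx 3/8$ of the colours used by the greedy bound.

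My approach would proceed as follows. First, I would prove that $L^2(G)$ is $\delta$-sparse for some sizeable absolute constant $\delta > 0$. The neighbourhood of an edge $e = uv \in E(G)$ in $L^2(G)$ consists of the edges of $G$ at distance at most $2$ from $e$, so counting edges between pairs in that neighbourhood reduces to counting short paths and cycles of $G$ through $u$ or $v$. An extremal analysis, in which the worst case is the blow-up of $C_5$, should yield a density bound strictly less than $\binom{\Delta(L^2(G))}{2}$. Second, I would feed this sparsity into Theorem~\ref{thm:SparsityApprox} to obtain a savings proportional to $\varepsilon(\delta)\cdot \Delta(L^2(G))$ over the trivial bound. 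Third, I would handle near-extremal configurations (graphs close to a $C_5$-blow-up or to other sharp examples) by a direct structural argument, colouring $L^2(G)$ more efficiently when the sparsity argument is weakest.

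The hard part — indeed the reason Conjecture~\ref{conj:1.25} has been open for over three decades — is that the savings delivered by Theorem~\ref{thm:SparsityApprox} are fundamentally insufficient on their own. Even when $\delta$ approaches its maximum allowable value $0.9$, the resulting $\varepsilon$ is only about $0.27$, giving a bound near $1.47\Delta(G)^2$; in fact, as announced in the abstract, the method pushed to its present limit yields only $\chi'_s(G) \leq 1.835\Delta(G)^2$. To close the gap to $1.25$, one would need either a substantially stronger colouring lemma for very sparse graphs, perhaps an iterative nibble specifically tailored to squares of line graphs that exploits the forbidden induced subgraphs not present in arbitrary sparse graphs, or a structural reduction identifying and handling the few extremal families separately. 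Absent such a new ingredient, I would not expect the sparsity framework of Theorems~\ref{thm:density} and~\ref{thm:SparsityApprox} alone to settle the conjecture; my proposal should therefore be understood as an outline of the partial progress this paper can provide rather than a route to a complete proof.
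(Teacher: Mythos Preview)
The statement in question is Conjecture~\ref{conj:1.25}, the Erd\H{o}s--Ne\v{s}et\v{r}il conjecture, which the paper does \emph{not} prove; it remains open. Your proposal correctly recognises this and accurately diagnoses why the sparsity framework of Theorem~\ref{thm:SparsityApprox} cannot close the gap to $1.25\Delta^2$: the saving $\varepsilon(\delta)$ is far too small even for the largest admissible $\delta$. In that sense there is nothing to compare --- there is no proof in the paper, and you rightly decline to claim one.

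That said, your sketch of the partial progress differs from what the paper actually does to reach $1.835\Delta^2$ in Theorem~\ref{thm:MainThmStrongEdge}. You propose bounding the sparsity of $L^2(G)$ itself (this is the Bruhn--Joos route, giving $\delta$ asymptotically $1/4$) and then handling near-extremal configurations structurally. The paper instead restricts attention to the maximal subgraph $F\subseteq L^2(G)$ of minimum degree at least $(2-\eta)\Delta^2$, argues that it suffices to colour $F$ (the rest finishes greedily), and then proves in Lemma~\ref{lem:Fdensity} that $F$ enjoys a substantially better sparsity bound ($\delta\approx 0.345$ at $\eta=0.164$) than $L^2(G)$ does globally. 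This high-minimum-degree reduction is the extra structural ingredient that lifts the bound from Bruhn--Joos's $1.93\Delta^2$ to $1.835\Delta^2$; your outline does not mention it, and your hypothetical calculation with $\delta$ near $0.9$ is not attainable for $L^2(G)$ by any known argument.
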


If true, this bound would be tight. Indeed, if $G_k$ denotes the graph obtained from a $5$-cycle by blowing up each vertex into $k$ vertices, then $\Delta(G_k) = 2k$ and $L^2(G_k)$ is a clique with $5 k^2 = 1.25 \Delta(G_k)^2$ vertices. Figure~\ref{fig:blowup} depicts the graph $G_3$.

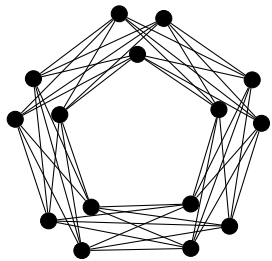
\begin{figure}[h]
\centering
\begin{tikzpicture}[scale=1.7]
\tikzstyle{whitenode}=[draw=white,circle,fill=white,minimum size=0pt,inner sep=0pt]
\tikzstyle{blacknode}=[draw,circle,fill=black,minimum size=6pt,inner sep=0pt]
\draw (0,0) node[whitenode] (a1) {};
\foreach \i in {2,3,4,5} {\pgfmathparse{\i-1} \let\j\pgfmathresult \draw (a\j) ++(72*\j-72:1) node[whitenode] (a\i) {};}
\foreach \i in {1,2,3,4,5} {\foreach \j in {1,2,3} {\draw (a\i) ++(120*\j+72*\i-2*72:0.2) node[blacknode] (b\i\j) {};}}
\foreach \i/\k in {1/2,2/3,3/4,4/5,5/1} {\foreach \j in {1,2,3} {\foreach \l in {1,2,3} {\draw (b\i\j) edge node {} (b\k\l);}}}
\end{tikzpicture}\caption{A blow-up of the $5$-cycle.}\label{fig:blowup}
\end{figure}

In 1997, Molloy and Reed made the first step towards Conjecture~\ref{conj:1.25}. They showed that for all graphs $G$, the graph $L^2(G)$ is a subgraph of a graph $H$ such that $\Delta(H) = 2\Delta(G)^2$ and $H$ is $1/36$-sparse. Thus the naive colouring procedure guarantees that $H$ (and hence $G$) can be coloured with $(1-\varepsilon)(2\Delta(G)^2 + 1)$ colours for some $\varepsilon >0$.

\begin{theorem}\emph{\cite{MolloyReedSCI}}\label{thm:SCIepsilon}
There exists $\varepsilon >0$ such that if $G$ is a graph with sufficiently large maximum degree $\Delta$, then $\chi'_s(G) \leq (1-\varepsilon) \cdot 2 \Delta^2$.
\end{theorem}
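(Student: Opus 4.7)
The plan is to reduce $\chi'_s(G) = \chi(L^2(G))$ to colouring a $\delta$-sparse graph of maximum degree $2\Delta^2$ that contains $L^2(G)$ as a subgraph, and then to invoke Theorem~\ref{thm:SparsityApprox} (which we have already proved in the paper). Concretely, we first construct a supergraph $H \supseteq L^2(G)$ on the same vertex set with $\Delta(H) = 2\Delta^2$ exactly, padding with dummy edges where necessary. This is possible because for each edge $e = uv \in E(G)$, the edges at distance at most $2$ from $e$ split naturally into the group $N_u$ of edges incident to a vertex of $(N(u) \cup \{u\}) \setminus \{v\}$ and the symmetric group $N_v$, each of size at most $\Delta^2$.

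The core step is to show that $H$ can be taken to be $\delta$-sparse with $\delta = 1/36$, following Molloy and Reed~\cite{MolloyReedSCI}. The key observation is that inside the neighbourhood $N(e) = N_u \cup N_v$ of a vertex $e = uv$ of $L^2(G)$, many pairs are \emph{not} adjacent. Specifically, a pair $(f_1, f_2)$ with $f_1 \in N_u$ incident to a neighbour $x$ of $u$ outside $N(v) \cup \{v\}$ and $f_2 \in N_v$ incident to a neighbour $y$ of $v$ outside $N(u) \cup \{u\}$ is adjacent in $L^2(G)$ only if $xy \in E(G)$. A double counting of such pairs, controlled only by degree bounds in $G$, shows that a constant fraction of $\binom{2\Delta^2}{2}$ pairs are missing edges inside $N(e)$, yielding a positive constant $\delta$. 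The added dummy edges can be chosen to preserve this sparsity bound (at worst a slightly smaller $\delta$).

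With $H$ in hand, we apply Theorem~\ref{thm:SparsityApprox} with $\delta = 1/36 \in [0, 0.9]$: since $\Delta(H) = 2\Delta^2 \to \infty$ with $\Delta$, we obtain
\[
\chi'_s(G) = \chi(L^2(G)) \leq \chi(H) \leq (1 - \varepsilon_0)(2\Delta^2 + 1),
\]
where $\varepsilon_0 = 0.3012\,\delta - 0.1283\,\delta^{3/2} > 0$ is a positive absolute constant. For $\Delta$ sufficiently large, the additive $+1$ is absorbed into a slightly smaller positive $\varepsilon < \varepsilon_0$, yielding $\chi'_s(G) \leq (1 - \varepsilon) \cdot 2\Delta^2$, as required.

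The main obstacle is Step~2, the sparsity lemma for the auxiliary graph $H$: once the constant $\delta$ is established, the colouring is handled entirely by Theorem~\ref{thm:SparsityApprox}, and the remaining manipulations are purely arithmetic. Note that any improvement in either the constant $\delta$ of the structural lemma or in the function $\varepsilon(\delta)$ of Theorem~\ref{thm:SparsityApprox} translates directly into a better value of $\varepsilon$ in Theorem~\ref{thm:SCIepsilon}; this is precisely how the stronger bound $\chi'_s(G) \leq 1.835\,\Delta^2$ advertised in the abstract will arise.
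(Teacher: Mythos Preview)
Your proposal is correct and follows essentially the same approach that the paper describes: the paper does not give its own proof of Theorem~\ref{thm:SCIepsilon} but rather cites Molloy and Reed, summarising their argument as ``$L^2(G)$ is a subgraph of a graph $H$ such that $\Delta(H)=2\Delta(G)^2$ and $H$ is $1/36$-sparse, so the naive colouring procedure gives a $(1-\varepsilon)(2\Delta^2+1)$-colouring.'' Your plan reproduces exactly this outline, substituting the paper's stronger Theorem~\ref{thm:SparsityApprox} for the original Molloy--Reed colouring step, which is a harmless (indeed stronger) replacement.
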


With $\delta = 1/36$ and their colouring procedure, the value of $\varepsilon$ that Molloy and Reed obtain is approximately $0.0238 \cdot \frac{1}{36} \approx 0.0007$. Bruhn and Joos~\cite{BruhnJoos} improved the bound on the neighbourhood sparsity and showed that $L^2(G)$ is asymptotically $1/4$-sparse. With $\delta = 0.24$, say, and their colouring procedure, they deduce Theorem~\ref{thm:SCIepsilon} for $\varepsilon = 0.1827 \cdot 0.24 - 0.0778 \cdot 0.24^{3/2} \approx 0.0347$. This gives the following.

\begin{theorem}\emph{\cite{BruhnJoos}}\label{thm:BruhnJoosSCI}
If $G$ is a graph of sufficiently large maximum degree $\Delta$, then $\chi'_s(G) \leq 1.93 \Delta^2$. 
\end{theorem}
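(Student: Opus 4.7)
The plan is to follow the two-step framework pioneered by Molloy and Reed: first embed $L^2(G)$ into an auxiliary graph $H$ of maximum degree exactly $2\Delta(G)^2$ which is asymptotically $(1/4-o(1))$-sparse, and then apply the Bruhn--Joos sparsity-based colouring bound. For the embedding, note that each vertex of $L^2(G)$ corresponds to an edge $e = uv$ of $G$ and has $L^2(G)$-degree at most $2\Delta(\Delta-1) < 2\Delta^2$; so we may pad with a small number of auxiliary vertices and edges to raise every degree to exactly $2\Delta^2$. This yields $H \supseteq L^2(G)$ with $\Delta(H) = 2\Delta^2$ and $\chi'_s(G) = \chi(L^2(G)) \le \chi(H)$.

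The heart of the argument is establishing that $H$ is $\delta$-sparse for some $\delta$ just below $1/4$; we aim for $\delta = 0.24$. For each vertex $e = uv$ of $H$, we must show that within the padded neighbourhood of size $2\Delta^2$, at most $(3/4 + o(1))\binom{2\Delta^2}{2}$ pairs are joined in $L^2(G)$. An edge $e' = xy$ of $G$ lies in the $L^2(G)$-neighbourhood of $e$ iff $e'$ is within $G$-edge-distance $2$ of $e$, and two such edges $e',e''$ are adjacent in $L^2(G)$ iff they are within $G$-edge-distance $2$ of each other. A careful double-counting---partitioning the neighbourhood according to which endpoints of $e$ each incident edge is close to, and using Cauchy--Schwarz to control the number of within-distance-$2$ pairs across these parts---should yield the $(1/4 - o(1))$-sparsity bound. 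The extremal configuration to benchmark against is the blow-up of $C_5$ from Figure~\ref{fig:blowup}: there the $L^2(G)$-neighbourhood of any edge is a clique of size roughly $5k^2$ sitting inside a padded neighbourhood of size $2\Delta^2 \approx 8k^2$, so roughly $3/4$ of the pairs are non-edges, saturating the bound.

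Once the $0.24$-sparsity of $H$ is in hand, applying the Bruhn--Joos colouring bound $\varepsilon(\delta) = 0.1827\delta - 0.0778\delta^{3/2}$ with $\delta = 0.24$ gives $\varepsilon \approx 0.0347$, and hence
\[
\chi'_s(G) \le \chi(H) \le (1-\varepsilon)(\Delta(H)+1) = (1 - 0.0347)(2\Delta^2 + 1) \le 1.93\Delta^2
\]
for $\Delta$ sufficiently large. The main obstacle is the sparsity calculation of step two: the structural analysis must be sharp enough to match the $1/4$ barrier witnessed by the $C_5$-blowup, which is a significant improvement over the $1/36$ bound obtained by the straightforward Molloy--Reed counting and is where all the new ideas go.
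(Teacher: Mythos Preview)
Your overall plan matches the paper's brief description of how Bruhn and Joos proved this: show that $L^2(G)$ (or a supergraph of it with maximum degree $2\Delta^2$) is asymptotically $1/4$-sparse, take $\delta = 0.24$ for $\Delta$ large, and plug into the Bruhn--Joos colouring bound $\varepsilon(\delta) = 0.1827\delta - 0.0778\delta^{3/2} \approx 0.0347$ to obtain $\chi'_s(G) \le 1.93\Delta^2$. The paper does not prove Theorem~\ref{thm:BruhnJoosSCI} itself; the sparsity input underlying it is recorded as Lemma~\ref{lem:BruhnJoosBounds}, and the actual structural analysis you flag as ``the main obstacle'' is precisely the content of that lemma.

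One substantive correction: your benchmark for the sparsity calculation is wrong, and would mislead you if you tried to execute the argument. In the $C_5$-blowup $G_k$ one has $\Delta = 2k$ and $L^2(G_k)$ is a clique on $5k^2$ vertices, so the padded neighbourhood of size $2\Delta^2 = 8k^2$ contains $\binom{5k^2}{2}$ edges, giving density $(5/8)^2 = 25/64 \approx 0.39$ --- roughly $61\%$ non-edges, not $75\%$. This does \emph{not} come close to saturating the $1/4$-sparsity bound (which asserts density at most $3/4$). The $C_5$-blowup is extremal for the conjectured value of $\chi'_s$, not for the neighbourhood density of $L^2(G)$. In the notation of Lemma~\ref{lem:BruhnJoosBounds}, the near-extremal configurations for the sparsity bound have $\alpha = \beta = \gamma = 0$ (disjoint independent neighbourhoods of $u$ and $v$, with each second-neighbour sending either all or none of its edges into $X$), where the bound evaluates to $\tfrac{3}{2}\Delta^4 + O(\Delta^3) = (\tfrac{3}{4}+o(1))\binom{2\Delta^2}{2}$. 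Your Cauchy--Schwarz\,/\,double-counting sketch is in the right spirit, but you should calibrate it against these parameters rather than against the $C_5$-blowup.
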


In this article we improve the bound in Theorem~\ref{thm:BruhnJoosSCI}. To do this we first show that one only needs to colour a subgraph $F$ of $L^2(G)$ consisting of high degree vertices with many neighbours of high degree. This idea resembles the notion that one need only colour a critical subgraph of $L^2(G)$. We then show that $F$ admits a much better bound on its neighbourhood sparsity than $L^2(G)$. Combined with Theorem~\ref{thm:SparsityApprox}, we obtain the following result.

\begin{theorem}\label{thm:MainThmStrongEdge}
If $G$ is a graph of sufficiently large maximum degree $\Delta$, then $\chi'_s(G) \leq 1.835 \Delta^2$. 
\end{theorem}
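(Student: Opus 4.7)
The plan is to combine Theorem~\ref{thm:SparsityApprox} with an identification of a suitable subgraph $F\subseteq L^2(G)$ which is substantially sparser than $L^2(G)$ itself (whose best sparsity, by Bruhn--Joos, is only $1/4$, giving the $1.93\Delta^2$ bound), and whose list colouring can be extended greedily to the whole of $L^2(G)$. Concretely, fix a threshold $t$ of the form $t=(2-\eta)\Delta^2$ for a small constant $\eta>0$, and let $F$ be the subgraph of $L^2(G)$ induced by those vertices $e$ of degree at least $t$ in $L^2(G)$ that additionally have at least $t$ neighbours of degree at least $t$ in $L^2(G)$. This is the "high degree, with many neighbours of high degree" subgraph alluded to in the introduction.

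Next, I would prove an extension lemma: any proper list colouring of $F$ from lists of size $\lceil 1.835\Delta^2\rceil$ extends to all of $L^2(G)$. The vertices $e\notin V(F)$ split into two types: either (a) $\deg_{L^2(G)}(e)<t$, or (b) $\deg_{L^2(G)}(e)\ge t$ but fewer than $t$ neighbours of $e$ have degree $\ge t$. Colouring type (b) vertices after $F$ but before type (a) vertices, one sees that each such vertex has at most $t<1.835\Delta^2$ neighbours already coloured, so a free colour exists in its list.

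The main step, and the hardest part, is to show that $F$ is $\delta$-sparse for a $\delta$ large enough that $0.3012\,\delta-0.1283\,\delta^{3/2}\ge 1-\tfrac{1.835\,\Delta^2}{\Delta(F)+1}$; using $\Delta(F)\le \Delta(L^2(G))<2\Delta^2$ this amounts to $\delta$ somewhat above $0.37$, significantly beyond the $1/4$-sparsity of the whole of $L^2(G)$. The point is that for $e=uv\in V(F)$, the lower bound $\deg_{L^2(G)}(e)\ge t=(2-\eta)\Delta^2$ forces the local structure around $e$ to be nearly extremal: both endpoints $u,v$ have close to $\Delta$ neighbours, those neighbours themselves have close to $\Delta$ neighbours, and all these second neighbourhoods are nearly pairwise disjoint. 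Under this near-extremal configuration, most pairs of edges in $N_{L^2(G)}(e)$ are at distance greater than $2$ in $G$ and hence non-adjacent in $L^2(G)$; a careful count, partitioning neighbours of $e$ according to which endpoint of $e$ they attach to and whether the attachment is direct or via a common neighbour in $N(u)\cup N(v)$, then gives the required sparsity.

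Finally, applying Theorem~\ref{thm:SparsityApprox} to $F$ with this value of $\delta$ (the hypothesis $\Delta(F)$ large enough being inherited from $\Delta$ large enough, since $F$ contains vertices of degree at least $t=\Theta(\Delta^2)$) gives $\chi_\ell(F)\le(1-\varepsilon)(\Delta(F)+1)\le 1.835\,\Delta^2$. Combined with the extension lemma, this bounds $\chi'_s(G)=\chi(L^2(G))\le \chi_\ell(L^2(G))\le 1.835\,\Delta^2$. The main obstacle is the sparsity analysis of $F$: squeezing the near-extremality of high-degree vertices in $L^2(G)$ to prove a sparsity bound comfortably better than the $1/4$ available for all of $L^2(G)$, while still keeping $t$ (and hence $\Delta(F)$) large enough to make the extension step trivial.
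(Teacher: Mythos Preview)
Your overall strategy is correct and is essentially the paper's: isolate a high-min-degree subgraph $F$ of $L^2(G)$, prove $F$ is much sparser than the $1/4$ bound for all of $L^2(G)$, colour $F$ via Theorem~\ref{thm:SparsityApprox}/\ref{thm:MainThmCorr}, and extend greedily. Two points deserve correction.

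First, the paper does not take $F$ to be ``high degree with many high-degree neighbours'' (a two-step condition) but rather the $(2-\eta)\Delta^2$-\emph{core} of $L^2(G)$: the maximal induced subgraph with minimum degree at least $(2-\eta)\Delta^2$. Your extension argument still works for the core, and the core has the extra property that every $e\in F$ has at least $(2-\eta)\Delta^2$ neighbours \emph{in $F$}; the paper's sparsity lemma (Lemma~\ref{lem:Fdensity}) uses precisely this, and your two-level $F$ does not guarantee it.

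Second, and more importantly, the sparsity mechanism is not the ``near-extremal local structure'' heuristic you sketch, and your numerical target is off. The paper does not argue that high strong degree forces second neighbourhoods to be nearly disjoint; instead it works inside the Bruhn--Joos framework with the parameters $\alpha,\beta,\gamma$ and the $4$-cycle count $C_4(X,Y)$. The gains over $\delta=1/4$ come from two places: an extra term $-\tfrac{\gamma^2}{2(2-\alpha-\beta)}\Delta^4$ in the edge count (Lemma~\ref{lem:improvedBound}), and the $F$-membership constraint, which both forces $\alpha+\beta\le\eta$ and lets one subtract an \emph{additional} $4\,C_4(X,Y)$ from the edge count (so $6\,C_4(X,Y)$ in total rather than $2$). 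Optimising over $\alpha,\beta,\gamma$ with $\eta=0.164$ yields $\delta\approx 0.345$, not $0.37$. This is \emph{not} enough for Theorem~\ref{thm:SparsityApprox} to give $\varepsilon\ge 0.0825$; the paper instead invokes the sharper exact inequality of Theorem~\ref{thm:MainThmCorr}, which $\delta=0.345,\ \varepsilon=0.0825$ does satisfy. So your plan to reach $\delta>0.37$ via a direct near-extremal count would have to outperform the paper's optimisation, and there is no indication that it does; the safer route is to follow the $C_4$-based calculation and then appeal to Theorem~\ref{thm:MainThmCorr} rather than its approximation.
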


\subsection{Outline of the Paper}

In Section~\ref{sec:density} we deal with Question~\ref{ques:1} and prove Theorem~\ref{thm:density}. In Section~\ref{sec:sparsity} we address Question~\ref{ques:2}. We recall the naive colouring procedure and develop an iterative version. We then derive Theorem~\ref{thm:SparsityApprox} as a consequence. Section~\ref{sect:strongedge} is devoted to the strong chromatic index and the proof of Theorem~\ref{thm:MainThmStrongEdge}. Finally, in Section~\ref{sec:ReedsConj}, we prove Theorem~\ref{thm:MainThmReeds}.

For standard definitions and graph theoretic notation, we refer the reader to Diestel~\cite{diestel}.

\section{A Density Lemma}\label{sec:density}

In this section we prove Theorem~\ref{thm:density}, which guarantees that a graph that is critical with respect to some $k$-list-assignment is $\delta$-sparse, for some $\delta$ depending on $k$ and the clique number of $G$. To do this, we first show that if $G$ is an $L$-critical graph with respect to some $k$-list-assignment $L$, then the minimum degree of an induced subgraph of $G$ cannot be too large.

\begin{proposition}\label{prop:minDegree}
If $G$ is an $L$-critical graph with respect to some $k$-list-assignment $L$, then for all induced subgraphs $H$ of $G$, we have $\delta(H) < \Delta(G) - k + \chi_\ell(H)$.
\end{proposition}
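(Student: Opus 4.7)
The strategy is a standard extension argument exploiting $L$-criticality. Suppose toward contradiction that some induced subgraph $H$ of $G$ satisfies $\delta(H) \ge \Delta(G) - k + \chi_\ell(H)$. I will show that this allows an $L$-colouring of $G$ to be assembled in two stages: first colour $G - V(H)$, which is a proper subgraph of $G$ and hence $L$-colourable by criticality, and then extend the colouring to $H$ by list colouring $H$ with the residual lists.

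Concretely, let $\phi$ be an $L$-colouring of $G - V(H)$, which exists since $G$ is $L$-critical. For each $v \in V(H)$, define the residual list
\[
L'(v) = L(v) \setminus \{\phi(u) : u \in N_G(v) \setminus V(H)\}.
\]
The number of colours removed is at most $|N_G(v) \setminus V(H)| = d_G(v) - d_H(v) \le \Delta(G) - \delta(H)$, so
\[
|L'(v)| \ge k - \Delta(G) + \delta(H) \ge \chi_\ell(H),
\]
where the last inequality is the assumption for contradiction. Thus $L'$ is a list assignment on $H$ with every list of size at least $\chi_\ell(H)$, so by definition of the list chromatic number, $H$ admits an $L'$-colouring $\psi$.

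Combining $\phi$ and $\psi$ produces a proper $L$-colouring of the whole graph $G$: any edge inside $V(H)$ is properly coloured by $\psi$, any edge inside $V(G) \setminus V(H)$ is properly coloured by $\phi$, and any edge $uv$ with $u \in V(H)$ and $v \notin V(H)$ is properly coloured because $\psi(u) \in L'(u)$ explicitly excludes $\phi(v)$. This contradicts the fact that $G$ has no $L$-colouring, completing the proof.

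There is essentially no obstacle: the argument is the natural ``peel off $H$, colour the rest, then relist and colour $H$'' template. The only point to state carefully is the definition of $\chi_\ell(H)$, namely that any list assignment with lists of size at least $\chi_\ell(H)$ admits a colouring, which is exactly what allows the residual lists $L'$ (whose sizes may vary but are all $\ge \chi_\ell(H)$) to be used.
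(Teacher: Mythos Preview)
Your proof is correct and follows essentially the same approach as the paper: assume $\delta(H)\ge \Delta(G)-k+\chi_\ell(H)$, colour $G-V(H)$ by criticality, form residual lists of size at least $\chi_\ell(H)$, and extend. The only difference is that you spell out the verification that the combined colouring is proper on all three edge types, which the paper leaves implicit.
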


\begin{proof}
Suppose for a contradiction that $H$ is an induced subgraph of $G$ with $\delta(H) \geq \Delta(G)-k+\chi_\ell(H)$. Let $G' = G - V(H)$ and note that for every vertex $v \in V(H)$, we have $d_{G'}(v) \leq \Delta(G) - \delta(H) \leq k - \chi_\ell (H)$. Since $G$ is $L$-critical, $G'$ has an $L$-colouring $\phi$. Now to each vertex $v \in V(H)$, assign a list of colours $L'(v)$, defined by $L'(v) = L(v) \setminus \{\phi(u) \: | \: u \in N_{G'}(v)\}$. For each $v \in V(H)$, we have $|L'(v)| \geq k - d_{G'}(v) \geq \chi_\ell (H) $. Hence $\phi$ can be extended to an $L$-colouring of $G$, a contradiction.
\end{proof}

The bound in Proposition~\ref{prop:minDegree} exhibits an awkward dependence on $\chi_\ell(H)$, and so we first derive an upper bound on this parameter. Note that we let $\overbar{G}$ denote the complement of $G$. One can easily guarantee a large matching in the complement of a graph if the clique number is small.

\begin{proposition}\label{prop:antimatching}
For every graph $G$, $\overbar{G}$ has a matching of size at least $\lceil \frac12 (|V(G)| - \omega(G)) \rceil$.
\end{proposition}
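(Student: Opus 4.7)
The plan is to consider a maximum matching $M$ in $\overbar{G}$ and analyse the set $U$ of vertices left unmatched by $M$. The observation is that any two vertices $u,v \in U$ must be adjacent in $G$ (equivalently, non-adjacent in $\overbar{G}$); for otherwise the edge $uv$ would lie in $\overbar{G}$ and could be added to $M$, contradicting its maximality. Consequently $U$ induces a clique in $G$, so $|U| \leq \omega(G)$.

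From here the bound is immediate: since $|V(G)| = 2|M| + |U|$, we get
\[
|M| \;\geq\; \frac{|V(G)| - \omega(G)}{2},
\]
and because $|M|$ is an integer we may round up to obtain $|M| \geq \lceil \tfrac{1}{2}(|V(G)| - \omega(G))\rceil$, as required.

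There is no real obstacle here; the only thing to be careful about is phrasing the maximality argument correctly (a maximum matching in $\overbar{G}$, not merely a maximal one, is not even needed — maximality with respect to inclusion already suffices for the unmatched set to be an independent set of $\overbar{G}$, i.e.\ a clique of $G$). The ceiling in the statement is then just the integrality of $|M|$.
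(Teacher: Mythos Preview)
Your proof is correct and follows exactly the same idea as the paper's: a maximal matching in $\overbar{G}$ leaves an unmatched set that is independent in $\overbar{G}$, hence a clique in $G$, giving $|V(G)|-2|M|\le \omega(G)$. Your remark that inclusion-maximality already suffices matches the paper's one-line argument precisely.
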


\begin{proof}
If $M$ is a maximal matching in $\overbar{G}$, then $G - V(M)$ is a clique. Thus $|V(G)| - 2|M| \leq \omega(G)$.
\end{proof}

We make use of the following classical result of Erd\H{o}s, Rubin and Taylor~\cite{ERT}.  

\begin{theorem}\label{thm:ERT}\emph{\cite{ERT}}
Let $r$ be an integer. If $G$ is a complete $r$-partite graph where each partition class contains at most two vertices, then $\chi_\ell (G) = r$.
\end{theorem}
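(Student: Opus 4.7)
The lower bound $\chi_\ell(G)\ge r$ is immediate, since $G$ contains a clique of size $r$ obtained by choosing one vertex from each partition class, so $\chi_\ell(G)\ge \chi(G)\ge \omega(G)=r$.

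For the upper bound I would proceed by induction on $r$, with the base case $r=1$ being trivial. Given an $r$-list-assignment $L$, first check whether some colour $c$ lies in $L(v)$ for every vertex $v$ of some part $V_i$. If so, assign $c$ to all of $V_i$, delete these vertices from $G$, and remove $c$ from every remaining list. The resulting graph is complete $(r-1)$-partite with parts of size at most two, and every remaining list has size at least $r-1$, so the inductive hypothesis completes the colouring. Observe that singletons $V_i=\{v\}$ always fall under this easy case, since any $c\in L(v)$ witnesses it.

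The remaining case is when every part has size exactly two and the negation of the above condition forces $L(u_i)\cap L(w_i)=\emptyset$ for every part $V_i=\{u_i,w_i\}$. In this case a proper $L$-colouring amounts to selecting $2r$ pairwise distinct colours, one from each of the $2r$ vertex lists. I would handle this via Hall's theorem applied to the bipartite graph $B$ whose one side is $V(G)$, whose other side is the set of all colours appearing in some list, and with edges encoding list membership; a matching saturating $V(G)$ in $B$ yields the colouring, since two vertices in distinct parts are adjacent in $G$ and two vertices in the same part have disjoint lists, so in either case the matched colours must differ.

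To verify Hall's condition, fix any $S\subseteq V(G)$ and let $k$ be the number of parts that $S$ meets. Because $L(u_i)\cap L(w_i)=\emptyset$, every colour appears in at most one list per part, hence in at most $k$ lists among $\{L(v):v\in S\}$. Double-counting incidences between $S$ and $N_B(S)$ yields
\begin{align*}
k\cdot|N_B(S)| \;\ge\; \sum_{v\in S}|L(v)| \;\ge\; r|S|,
\end{align*}
and since $k\le r$ we conclude $|N_B(S)|\ge |S|$, completing the proof. The only real obstacle I anticipate is articulating the case split so that Case B genuinely reduces to a $2r$-colour distinctness problem; once that is pinned down, the counting argument for Hall's condition is routine.
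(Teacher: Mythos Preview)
The paper does not supply its own proof of this theorem: it is quoted as a classical result of Erd\H{o}s, Rubin and Taylor and used as a black box in the proof of Proposition~\ref{prop:listChrom}. So there is nothing to compare against on the paper's side.

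Your argument is correct. The induction with the two-case split is clean: Case~A absorbs all singleton parts and any part whose two lists share a colour, and Case~B is then genuinely a system-of-distinct-representatives problem, which your Hall verification settles. The key counting step---that in Case~B any colour appears in at most one list per part, hence in at most $k$ lists over a set $S$ meeting $k$ parts---is exactly what is needed to push the inequality $k\,|N_B(S)|\ge r|S|\ge k|S|$ through. This Hall/SDR route is in fact the standard way this result is proved, so your approach is in line with the original.
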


Using Proposition~\ref{prop:antimatching} and Theorem~\ref{thm:ERT}, we can derive the desired bound.

\begin{proposition}\label{prop:listChrom}
If $G$ is a graph then $\chi_\ell (G) \leq \lfloor \frac12 (|V(G)| + \omega(G)) \rfloor$.
\end{proposition}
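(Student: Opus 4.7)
The natural plan is to combine Proposition~\ref{prop:antimatching} with Theorem~\ref{thm:ERT} by realising $G$ as a subgraph of a complete multipartite graph whose parts all have size at most two.

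First I would invoke Proposition~\ref{prop:antimatching} to obtain a matching $M$ in $\overbar{G}$ of size $m \geq \lceil \tfrac{1}{2}(|V(G)|-\omega(G))\rceil$. Since the edges of $M$ are non-edges of $G$, the $m$ pairs of matched vertices together with the $|V(G)|-2m$ unmatched vertices form a partition $\mathcal{P}$ of $V(G)$ into $r := |V(G)| - m$ classes, each of size $1$ or $2$, and each class is independent in $G$. Let $H$ be the complete $r$-partite graph with parts $\mathcal{P}$; then $G \subseteq H$, and in particular $\chi_\ell(G) \leq \chi_\ell(H)$.

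Theorem~\ref{thm:ERT} applies to $H$ (parts of size at most $2$) and yields $\chi_\ell(H) = r$. Finally, the arithmetic
\[
r \;=\; |V(G)| - m \;\leq\; |V(G)| - \left\lceil \tfrac{1}{2}(|V(G)|-\omega(G)) \right\rceil \;=\; \left\lfloor \tfrac{1}{2}(|V(G)|+\omega(G)) \right\rfloor
\]
closes the proof.

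There is no real obstacle here: the two preceding statements were clearly set up for exactly this argument, and the only verification required is that singletons are permitted among the parts in Theorem~\ref{thm:ERT}, which is indeed the case since the hypothesis is \emph{at most} two vertices per class.
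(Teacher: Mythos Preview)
Your proposal is correct and follows essentially the same route as the paper: invoke Proposition~\ref{prop:antimatching} to get a large antimatching, embed $G$ in the complete multipartite graph whose parts are the matched pairs and the singletons, and apply Theorem~\ref{thm:ERT}. The paper phrases the supergraph as $K_{|V(G)|}$ minus the antimatching rather than as a complete multipartite graph, but this is the same object, and the arithmetic is identical.
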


\begin{proof}
By Proposition~\ref{prop:antimatching}, the graph $G$ has an antimatching $M$ of size $\lceil \frac12 (|V(G)| - \omega(G)) \rceil$. Let $G'$ denote the graph with vertex set $V(G)$ and edge set ${V(G) \choose 2} \setminus M$. Note that $G'$ satisfies the conditions in Theorem~\ref{thm:ERT} with $r = \omega(G) + \lfloor \frac12 (|V(G)| - \omega(G)) \rfloor = \lfloor \frac12 (|V(G)| + \omega(G)) \rfloor$. Thus, by Theorem~\ref{thm:ERT}, we have $\chi_\ell(G') = r$. Now, since $G \subseteq G'$, we have $\chi_\ell(G) \leq \chi_\ell(G') = r$ as desired.
\end{proof}

Let $G$ be a graph and $A$ be a subset of $V(G)$ with $A = \{v_1, \dots, v_r\}$. We say that $v_1, \dots, v_r$ is a \emph{minimum-degree ordering} of $A$ if $v_i$ is a vertex of minimum degree in the subgraph $G[\{v_i, \dots, v_r\}]$, for all $i \in \{1, \dots, r\}$. We use this ordering to derive a first bound on $\delta$.

\begin{lemma}\label{lem:gap^2-gap*saved}
Let $G$ be a graph of maximum degree $\Delta$ and clique number $\omega$. If $G$ is $L$-critical with respect to some $k$-list-assignment $L$, then for every vertex $v \in V(G)$, we have $${\Delta \choose 2} - |E(G[N(v)])| \geq \frac12 \cdot {2k - \Delta - \omega + 1 \choose 2}.$$
\end{lemma}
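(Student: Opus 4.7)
The plan is to fix a vertex $v \in V(G)$, set $d := d_G(v)$, $H := G[N(v)]$, and $s := 2k - \Delta - \omega + 1$, and then lower-bound $\binom{\Delta}{2} - |E(H)|$ by extracting many non-edges via a minimum-degree ordering of $N(v)$. We may assume $s \geq 2$, since otherwise the right-hand side of the claim is at most zero, and similarly we may assume $\omega \geq 2$, since otherwise $H$ is edgeless and the claim is immediate. Since $G$ is $L$-critical with $|L(v)| \geq k$, a standard extension argument (colour $G-v$ first, then extend greedily to $v$) gives $d \geq k$.

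I would then take a minimum-degree ordering $v_1, \ldots, v_d$ of $N(v)$ and set $H_i := G[\{v_i, \ldots, v_d\}]$ with $n_i := d - i + 1$, so that $v_i$ achieves the minimum degree in $H_i$. Since $H_i \subseteq G[N(v)]$, adjoining $v$ to any clique of $H_i$ yields a larger clique of $G$, giving $\omega(H_i) \leq \omega - 1$. Proposition~\ref{prop:listChrom} then gives $\chi_\ell(H_i) \leq (n_i + \omega - 1)/2$, and Proposition~\ref{prop:minDegree} applied to $H_i$ gives $d_{H_i}(v_i) \leq \Delta - k - 1 + \chi_\ell(H_i)$. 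Combining these yields that the number $m_i$ of non-neighbours of $v_i$ in $\{v_{i+1}, \ldots, v_d\}$ satisfies $m_i \geq (n_i + s - \Delta)/2$ whenever this right-hand side is non-negative.

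Writing $r := \Delta - d$ and using $\sum_{i=1}^{d-1} m_i = \binom{d}{2} - |E(H)|$ together with $\binom{\Delta}{2} - \binom{d}{2} = r(2\Delta - r - 1)/2$, I would sum the contributions. A direct evaluation shows the positive-part sum equals $(s-r)(s-r+1)/4$ when $r \leq s$ and equals $0$ when $r \geq s + 1$. Thus, depending on the case, $\binom{\Delta}{2} - |E(H)|$ is at least $\tfrac{r(2\Delta - r - 1)}{2} + \tfrac{(s-r)(s-r+1)}{4}$ or $\tfrac{r(2\Delta - r - 1)}{2}$, respectively.

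The main obstacle is the ensuing algebraic verification that each of these expressions dominates $s(s-1)/4 = \tfrac{1}{2}\binom{s}{2}$. The case $r > s$ reduces to $2r(r-1) \geq s(s-1)$, immediate from $r \geq s+1$ and $r \leq \Delta$. The case $r \leq s$ reduces, after clearing denominators, to $4r\Delta - r^2 - 3r - 2s(r-1) \geq 0$; the sub-cases $r \in \{0,1\}$ are immediate, and for $r \geq 2$ one uses $r \leq \Delta - k$ and $2s \leq 4k - 2\Delta$ (valid since $\omega \geq 1$) to obtain $r + 2s + 3 \leq 4\Delta$, whence $(4\Delta - r - 3 - 2s)r + 2s \geq 0$. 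Careful bookkeeping with the floors from Proposition~\ref{prop:listChrom} is required but does not affect the argument.
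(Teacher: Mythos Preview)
Your proof is correct and uses the same core ingredients as the paper: a minimum-degree ordering of the neighbourhood together with Propositions~\ref{prop:minDegree} and~\ref{prop:listChrom}. The difference lies in how the gap between $d(v)$ and $\Delta$ is handled. You track the deficit $r=\Delta-d$ separately via the term $\binom{\Delta}{2}-\binom{d}{2}$, which then forces the case split $r\le s$ versus $r>s$ and the quadratic verification at the end. The paper instead pads $G[N(v)]$ with $\Delta-d(v)$ isolated vertices to obtain a graph $H$ on exactly $\Delta$ vertices; in a minimum-degree ordering of $H$ these isolated vertices come first, and each contributes the full $|V(H_i)|-1$ non-edges to the count. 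This absorbs your $\binom{\Delta}{2}-\binom{d}{2}$ term directly into the sum, so that the single uniform bound $D(v)\ge\sum_{i=1}^{\Delta}\max\{0,\tfrac12(2k-\Delta-\omega-i+1)\}$ collapses to $\tfrac12\binom{s}{2}$ immediately, with no case analysis or algebra. Your sharpening $\omega(H_i)\le\omega-1$ (using the apex $v$) is not in the paper, which uses only $\omega(H_i)\le\omega$; the paper compensates by invoking the strict inequality in Proposition~\ref{prop:minDegree}, and the two adjustments exactly cancel.
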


\begin{proof}
Let $v$ be a vertex of $G$ with $d(v) = r$, and let $D(v) = {\Delta \choose 2} - |E(G[N(v)])|$. Also, let $H$ denote the graph formed from $G[N(v)]$ by adding $\Delta - r$ independent vertices. We do this so as to compare $|E(G[N(v)])|$ more easily with ${\Delta \choose 2}$, as it is the maximum number of edges in the neighborhood of a vertex of degree $\Delta$. Finally, let $v_1, \dots, v_\Delta$ be a minimum-degree ordering of $V(H)$, and set $H_i = H[\{v_i, \dots, v_\Delta\}]$ for $i \in \{1, \dots, \Delta\}$. Clearly, we have $D(v) = \sum_{i=1}^{\Delta} \left( |V(H_i)| - 1 -d_{H_i}(v_i) \right)$.
For $i \in \{1, \dots, \Delta - r\}$, the vertex $v_i$ is isolated, and thus $d_{H_i}(v_i) = 0$. On the other hand, for $i \in \{\Delta - r +1, \dots, \Delta\}$, the vertex $v_i$ has degree $d_{H_i}(v_i) = \delta(H_i) < \Delta - k + \chi_\ell (H_i)$ by Proposition~\ref{prop:minDegree},  so we have
$$D(v) \geq \sum_{i=1}^{\Delta} \max\{0,  |V(H_i)| - (\Delta - k) -  \chi_\ell (H_i)\}.$$

By Proposition~\ref{prop:listChrom}, we have $\chi_\ell(H_i) \leq  \frac12 (|V(H_i)| + \omega(G))$. Furthermore, $|V(H_i)| = \Delta - i + 1$ for each $i \in \{1, \ldots, \Delta \}$. Thus, we have:
\begin{align*}
D(v) &\geq \sum_{i=1}^{\Delta} \max \left\{0,  \frac{\Delta - i+1}{2} - \frac{\omega}{2} - (\Delta - k)\right\}\\
 &= \frac12 \sum_{i=1}^{\Delta} \max \left\{0,  2k-\Delta-\omega - i+1 \right\}. \numberthis \label{eq:truncate_sum}
\end{align*}

The second term in the maximum of~\eqref{eq:truncate_sum} eventually becomes negative when $i > 2k - \Delta - \omega + 1$. Because of the maximum, we may truncate the sum and deduce that

\begin{align*}
D(v) &\geq \frac12 \cdot \sum_{i=1}^{2k-\Delta-\omega} \max \left\{0,  2k-\Delta-\omega - i+1 \right\}\\
&= \frac12 \cdot \sum_{j=1}^{2k- \Delta - \omega} j \\
& = \frac12 \cdot {2k - \Delta - \omega + 1 \choose 2}.
\end{align*}
\end{proof}

We can now prove Theorem~\ref{thm:density}.

\begin{proof}[Proof of Theorem~\ref{thm:density}]
Let $k= \lceil (1-\varepsilon)(\Delta(G) +1) \rceil$. By Lemma~\ref{lem:gap^2-gap*saved}, we have for every vertex $v\in V(G)$ that
\begin{align*}
{\Delta \choose 2} - |E(G[N(v)])| &\geq \frac12 \cdot {2k - \Delta - \omega + 1 \choose 2}\\ 
& = \frac12 \cdot {2\lceil (1-\varepsilon)(\Delta +1) \rceil - \Delta - (1-\alpha)(\Delta+1) + 1 \choose 2}\\
& \ge \frac12 \cdot {(\alpha-2\varepsilon)(\Delta+1) + 1 \choose 2}\\
& \ge \frac12 (\alpha-2\varepsilon)^2 {\Delta \choose 2}.
\end{align*}
Hence $G$ is $\frac{(\alpha-2\varepsilon)^2}2$-sparse.
\end{proof}

\section{A Sparsity Lemma}\label{sec:sparsity}

\subsection{The Naive Colouring Procedure}\label{subsec:ncp}

The \emph{naive colouring procedure} is a well studied technique which generates a partial proper $k$-colouring of a graph $G$. In the context of graph colourings it was first used by Kahn~\cite{KahnList}, though it had already appeared in a more abstract setting~\cite{AKS}. We refer the reader to~\cite{MolloyReedBook} for a survey on further applications of the technique. In its simplest form, the naive colouring procedure consists of the following two steps.

\begin{enumerate}
\item To each vertex $u \in V(G)$, assign a colour chosen uniformly at random from $\{1,\dots, k\}$.
\item If $u$ and $v$ are adjacent vertices with the same colour, then uncolour both $u$ and $v$.
\end{enumerate}

Let $G$ be a graph and $k$ be an integer with $k < \Delta(G)+1$. If no vertex of $G$ has too many edges in its neighbourhood, then one can show that with positive probability, the partial $k$-colouring generated by the above procedure has the property that vertices of large degree see many repeated colours in their neighbourhoods. To be more precise, let $\textrm{Col}(u)$ denote the number of coloured vertices in $N(u)$ and let $\textrm{Dist}(u)$ denote the number of \emph{distinct} colours amongst the colours of the vertices in $N(u)$. If there are repeated colours in $N(u)$, then clearly $\textrm{Col}(u) > \textrm{Dist}(u)$. The following proposition states that if the difference is large enough, then such a partial colouring can be extended to a colouring of the whole graph in an efficient way.

\begin{proposition}\label{prop:partial}
Let $G$ be a graph and $k$ be an integer such that $k < \Delta(G) + 1$. If there is a partial proper $k$-colouring of $G$ such that for every vertex $u \in V(G)$, we have $\textrm{Col}(u) - \textrm{Dist}(u) \geq d(u)+1-k$, then $G$ has a $k$-colouring.
\end{proposition}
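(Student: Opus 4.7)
The plan is to reduce the claim to a standard greedy list-colouring argument applied to the uncoloured subgraph. Let $\phi$ denote the given partial proper $k$-colouring and let $H$ be the subgraph of $G$ induced by the uncoloured vertices. For each $u\in V(H)$, define
\[
L(u) = \{1,\dots,k\} \setminus \{\phi(w) : w \in N_G(u) \text{ and } w \text{ is coloured}\}.
\]
Since the number of colours used on coloured neighbours of $u$ in $G$ is exactly $\textrm{Dist}(u)$, we have $|L(u)| \ge k - \textrm{Dist}(u)$. Moreover, the degree of $u$ in $H$ is $d_H(u) = d_G(u) - \textrm{Col}(u)$, because the neighbours of $u$ that remain in $H$ are precisely its uncoloured neighbours in $G$.

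Next I would use the hypothesis to compare these two quantities. Rearranging $\textrm{Col}(u) - \textrm{Dist}(u) \ge d_G(u) + 1 - k$ gives
\[
k - \textrm{Dist}(u) \;\ge\; d_G(u) - \textrm{Col}(u) + 1 \;=\; d_H(u) + 1,
\]
so $|L(u)| \ge d_H(u) + 1$ for every $u\in V(H)$.

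Finally, I would finish by greedy list-colouring of $H$: processing the vertices of $H$ in any order and assigning to each $u$ a colour from $L(u)$ not used so far on its neighbours works, since at most $d_H(u)$ colours are ever forbidden by neighbours and $|L(u)| \ge d_H(u)+1$. Combining the resulting $L$-colouring of $H$ with the original partial colouring $\phi$ of $V(G)\setminus V(H)$ yields a proper $k$-colouring of $G$; propriety across the interface is automatic because any colour assigned to $u\in V(H)$ lies in $L(u)$, which forbids the colours of all coloured neighbours.

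There is essentially no obstacle here: the entire content is the algebraic rearrangement that turns the hypothesis on $\textrm{Col}(u)-\textrm{Dist}(u)$ into the standard degree-plus-one condition for greedy list-colouring on the uncoloured subgraph.
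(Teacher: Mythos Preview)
Your proof is correct and follows essentially the same approach as the paper: define lists on the uncoloured subgraph, observe that $|L(u)|\ge k-\textrm{Dist}(u)\ge d_H(u)+1$ from the hypothesis, and finish greedily. The paper's argument is just a terser version of what you wrote.
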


\begin{proof}
Let $u \in V(G)$ be an uncoloured vertex. The number of uncoloured neighbours of $u$ is precisely $d(u) - \textrm{Col}(u)$. The number of colours in $\{1, \dots, k\}$ which do not appear in $N(u)$ is $k - \textrm{Dist}(u) \geq d(u) - \textrm{Col}(u)+1$. It remains to list colour the uncoloured subgraph $G'$, where every vertex $u \in V(G')$ has a list of size at least one greater than $d_{G'}(u)$. Such a colouring can be constructed greedily.
\end{proof}

It is hard to analyse the expectation of the random variable $\textrm{Col}(u) - \textrm{Dist}(u)$. However, by inclusion-exclusion, it is easy to see that $\textrm{Col}(u) - \textrm{Dist}(u) \geq P_u - T_u$, where $P_u$ and $T_u$ denote the number of pairs and triples of vertices in $N(u)$ which are all coloured the same and all remain coloured after the procedure. When computing the expectation of $P_u$ and $T_u$, it is convenient to assume that the graph in question is $\Delta$-regular. Indeed, this is no restriction, since if $G$ is a graph of maximum degree $\Delta$, then $G$ may be embedded in a $\Delta$-regular graph $G'$ by iterating the following process. Take two copies of $G$ and add edges between corresponding vertices of degree less than $\Delta$. Note that $\chi(G') = \chi(G)$ and if $G$ is $\delta$-sparse, then so is $G'$. In this way, we will frequently assume that the graph under consideration is $\Delta$-regular.

Once the expectations of $P_u$ and $T_u$ have been calculated, one can show that they are concentrated about their expectations. In other words, the probability that $P_u - T_u$ is far from its expectation is very small. The Lov\'{a}sz Local Lemma can then be applied to ensure that this is the case for every $u \in V(G)$.

\newtheorem*{lllem}{Lov\'asz Local Lemma}
\newcommand{\Dep}{\textnormal{Dep}}

\begin{lllem}
	Let $p \in [0, 1)$, $d$ a postive integer, and $\mathcal{B}$ a finite set of (bad) events such that for every $B \in \mathcal{B}$,
	\begin{itemize}
		\item $\Pr[B] \le p$, and
		\item There exists a set of events $\Dep(B) \subseteq \mathcal{B}$ of size at most $d$ such that $B$ is mutually independent of $\mathcal{B} \setminus \Dep(B)$. 
	\end{itemize}
	If $4pd \le 1$, then there exists an outcome in which none of the events in $B$ occur.
\end{lllem}

In this paper we show that the naive colouring procedure can be iterated. More precisely, we prove that if $G$ is a $\delta$-sparse graph, then after a single application of the procedure the graph induced by the uncoloured vertices retains some of the sparsity of the original graph. Thus we can apply the procedure again to the uncoloured subgraph. In order to show that the sparsity is retained, we first show that with positive probability, the set of uncoloured vertices behaves somewhat randomly. The precise condition that we require is the following.
 
\begin{definition}\label{def:mu-quasirandom}
Let $\mu \in [0,1]$, $G$ be a graph with maximum degree $\Delta$, and $A \subseteq V(G)$. We say that $G[A]$ is a \emph{$\mu$-quasirandom} subgraph of $G$ if for every pair of not necessarily distinct vertices $u, v \in V(G)$, we have $$||N(u)\cap N(v) \cap A|-\mu |N(u)\cap N(v)|| \leq \sqrt{\Delta}(\log \Delta)^5.$$
\end{definition}

Note that for $u=v$, the condition in Definition~\ref{def:mu-quasirandom} reduces to $|d_A(u) - \mu d(u)| \leq \sqrt{\Delta}(\log \Delta)^5$. To show that the uncoloured subgraph is a $\mu$-quasirandom subgraph of $G$, we track more random variables which count the number of uncoloured vertices in the common neighbourhood of two vertices. These random variables will also be shown to be highly concentrated, and so we can add the corresponding bad events to our previous application of the Lov\'{a}sz Local Lemma.

\subsection{Correspondence Colouring}

Any iterative application of the naive colouring procedure necessitates the introduction of lists of colours. This is because in each step, some colours are forbidden at a vertex $v$, namely those which have been assigned to the neighbours of $v$ in a previous application. In analysing the procedure, a technical issue arises due to the fact that the probability a vertex keeps a particular colour in its list may vary depending on the vertex and the colour. Previously, this issue has been dealt with by introducing extra vertices, or coin flips, to equalise the probabilities. 

Here, we use a generalisation of list colouring called \emph{correspondence colouring}, introduced by Dvo\v{r}\'ak and the third author in~\cite{correspondence} (and sometimes referred to as $\textnormal{DP}$-coloring). As well as proving a more general statement, the use of correspondence coloring automatically equalises the probabilities, and thus simplifies the proof. Here is the definition we use which is equivalent to but slightly different from the definitions given elsewhere.

\begin{definition}\emph{\cite{correspondence}}\label{def:correspondence}
Let $G$ be a graph, and let $\vec{G}$ be an arbitrary orientation of $G$.
\begin{itemize}
\item A \emph{correspondence assignment} $C$ of $G$ is a function defined on $V(G) \cup E(\vec{G})$ as follows: To each vertex $u \in V(G)$, $C$ assigns a set $C(u) \subseteq \mathbb{N}$, and to each edge $uv\in E(\vec{G})$, $C$ assigns an injective partial function $C_{uv}:C(u)\to C(v)$ such that $C_{vu}=C_{uv}^{-1}$ for every edge $uv \in \vec{G}$. 
\item If each $C(u)$ has size at least $k$, then $C$ is a \emph{$k$-correspondence assignment} for $G$.
\item A \emph{$C$-colouring} of $G$ is a function
 $f:V(G)\to\mathbb{N}$ such that $f(u) \in C(u)$ for every $u \in V(G)$, and for every edge $uv\in E(\vec{G})$, either $f(u)\not\in\text{dom}(C_{uv})$ or $C_{uv}(f(u))\neq f(v)$.
\item The \emph{correspondence chromatic number} of $G$, denoted $\chi_c(G)$, is the smallest integer $k$ such that $G$ is $C$-colourable
for every $k$-correspondence assignment $C$.
\end{itemize}
\end{definition}

We say that the function $C_{uv}$ assigned to the edge $uv$ is \emph{total} if $\text{dom}(C_{uv}) = C(u)$. Note that there is no requirement that functions in the definition above are total. Hence the following definition.

\begin{definition}
Let $G$ be a graph and $C$ be a correspondence assignment of $G$. We say $C$ is \emph{total} if $C_{uv}$ and $C_{vu}$ are total for every edge $uv$ of $G$.
\end{definition} 

Note that if $C$ is total and $G$ is connected, then $|C(u)|=|C(v)|$ for every pair of vertices $u,v\in V(G)$. We remark if $C$ is a correspondence assignment of a graph $G$ such that $|C(u)|=|C(v)|$ for every pair of vertices $u,v\in V(G)$, then we will often extend $C$ to a total correspondence assignment $C'$ by arbitrarily extending each function $C_{uv}, uv \in E(G)$ to be total. Clearly, if $G$ is $C'$-colourable, then $G$ is also $C$-colourable. 

\begin{definition}
Let $G$ be a graph and let $C$ be a total correspondence assignment of $G$. If $uv \in E(G)$, $c_1\in C(u)$, $c_2\in C(v)$, then we say $c_1$ and $c_2$ \emph{correspond} under $C$ if $C_{uv}(c_1) = c_2$, or equivalently, $C_{vu}(c_2) = c_1$. If the correspondence assignment is clear from the context, then we simply say that $c_1$ and $c_2$ \emph{correspond}. 
\end{definition}

Note that Proposition~\ref{prop:partial} is still valid for correspondence colouring.

We now state precisely the variant of the naive colouring procedure that we use. Let $C$ be a $k$-correspondence assignment.

\begin{procedure}\label{proc:ncp}
Suppose $G$ is a graph and $C$ is a correspondence assignment for $G$. We generate a partial $C$-colouring $f$ as follows.
\begin{enumerate}[Step 1:]
\item Assign each vertex $u \in V(G)$ a colour $f_1(u)$ chosen uniformly at random from $C(u)$.\label{step:colour}
\item For every edge $uv \in E(G)$, pick an end $D(uv)$ uniformly at random, that is $D(uv)=u$ with probability $\frac12$ and $D(uv)=v$ with probability $\frac12$.\label{step:direct}
\item\label{step:Uncolour} For each vertex $u\in V(G)$, let $f(u)=f_1(u)$ if and only if for every edge $uv\in E(G)$, at least one of the following hold: $C_{uv}(f_1(u))\ne f_1(v)$ or $D(uv)=v$. (Equivalently, uncolour $u$ if there exists an edge $uv\in E(G)$ such that $C_{uv}(f_1(u))=f_1(v)$ and $D(uv)=u$.)
\end{enumerate}
\end{procedure}

We remark that the uncolouring method used here in Steps~\ref{step:direct} and~\ref{step:Uncolour} was also used by Bruhn and Joos~\cite{BruhnJoos}. Before analysing the procedure, we note the following fundamental fact.

\begin{proposition}\label{prop:prob}
Let $G$ be a $\Delta$-regular graph and let $C$ be a total $k$-correspondence assignment of $G$. For every vertex $u \in V(G)$, the probability that $u$ is coloured after an application of Procedure~\ref{proc:ncp} (that is $f(u)=f_1(u)$) is $(1-\frac{1}{2k})^\Delta$.
\end{proposition}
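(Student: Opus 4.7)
The plan is to condition on the value $f_1(u)=c$ that vertex $u$ receives in Step~\ref{step:colour}, compute the probability that $u$ survives Step~\ref{step:Uncolour} given this conditioning, and observe that this conditional probability does not depend on $c$.

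First I would fix a neighbour $v$ of $u$ and analyse the edge $uv$ in isolation. By the definition of Procedure~\ref{proc:ncp}, the vertex $u$ is uncoloured \emph{because of $uv$} precisely when $C_{uv}(f_1(u))=f_1(v)$ and $D(uv)=u$. Since $C$ is total, $C_{uv}$ is a total injective function from $C(u)$ to $C(v)$, so once we condition on $f_1(u)=c$, the image $c':=C_{uv}(c)$ is a single, specific element of $C(v)$. Because $f_1(v)$ is chosen uniformly from $C(v)$ (of size $k$, since $C$ is total and we may assume $G$ is connected, so all $|C(w)|$ are equal and hence equal to $k$) and $D(uv)$ is chosen uniformly and independently, the probability of the bad event for this edge is exactly $\tfrac{1}{k}\cdot\tfrac12=\tfrac{1}{2k}$.

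Next I would argue the mutual independence of the bad events over the $\Delta$ edges incident to $u$. Conditional on $f_1(u)=c$, the event corresponding to edge $uv$ depends only on the pair $(f_1(v),D(uv))$. Across distinct neighbours $v$, the random variables $f_1(v)$ are independent (different vertices get independent draws in Step~\ref{step:colour}), and similarly the variables $D(uv)$ for distinct edges are independent of each other and of all $f_1$'s (Step~\ref{step:direct}). Hence the $\Delta$ bad events are mutually independent conditionally on $f_1(u)=c$, so the conditional probability that none of them occurs — equivalently, that $f(u)=f_1(u)$ — is $\bigl(1-\tfrac{1}{2k}\bigr)^{\Delta}$.

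Finally, since this conditional probability is the same for every $c\in C(u)$, the law of total probability gives
\[
\Pr[f(u)=f_1(u)]=\sum_{c\in C(u)}\Pr[f_1(u)=c]\Bigl(1-\tfrac{1}{2k}\Bigr)^{\!\Delta}=\Bigl(1-\tfrac{1}{2k}\Bigr)^{\!\Delta}.
\]
The only subtle point worth a sentence of justification is the equalisation of probabilities across colours, which is precisely where the totality of $C$ (and hence the fact that each $|C(w)|=k$) is used: without it, $C_{uv}(c)$ might be undefined for some $c$, and the probability $\tfrac{1}{2k}$ would have to be replaced by $0$ in those cases, making the answer depend on $c$. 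I expect no substantial obstacle beyond carefully invoking totality at this step.
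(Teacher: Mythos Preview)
Your proof is correct and follows essentially the same approach as the paper: define the bad event at each edge $uv$ as $C_{uv}(f_1(u))=f_1(v)$ together with $D(uv)=u$, compute its probability as $\tfrac{1}{2k}$ using totality, and multiply over the $\Delta$ neighbours by independence. The only difference is that you condition on $f_1(u)=c$ to make the mutual independence of the $\Delta$ bad events transparent (since they then depend on disjoint random variables), whereas the paper simply asserts that the events $U_v$ are independent; your formulation is arguably cleaner, but the content is the same.
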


\begin{proof}
Let $K$ be the event that $f(u)=f_1(u)$. For each neighbour $v$ of $u$, let $U_v$ be the event that $C_{uv}(f_1(u)) = f_1(v)$ and $D(uv)=u$. Now by definition, $\mathbb{P}[K] = \mathbb{P}[\bigcap_{v\in N(u)} \overline{U_v}]$. Since these events are independent, we find that $\mathbb{P}[K] = \prod_{v \in N(u)}\mathbb{P}[\overline{U_v}].$

Note that $\mathbb{P}[U_v] = \mathbb{P}[C_{uv}(f_1(u)) = f_1(v)] \times \mathbb{P}[D(uv)=u],$
since the events are independent. Since all correspondences are total, $\mathbb{P}[C_{uv}(f_1(u)) = f_1(v)] = \frac{1}{k}$. Furthermore, $\mathbb{P}[D(uv)=u] = \frac12$. Hence $\mathbb{P}[U_v] = \frac{1}{2k}$ and $\mathbb{P}[\overline{U_v}] = 1 - \frac{1}{2k}$.

Thus $\mathbb{P}[K] = \prod_{v \in N(u)}(1-\frac{1}{2k}) = (1-\frac{1}{2k})^{|N(u)|}$.  As $G$ is $\Delta$-regular, $|N(u)|=\Delta$. Hence $\mathbb{P}[K] = (1-\frac{1}{2k})^\Delta$ as desired.
\end{proof}

We are ready to prove the key lemma of this section. The result is similar to Lemma 7 in Bruhn and Joos~\cite{BruhnJoos}, however we extend it to correspondence colouring, and we ensure that the uncoloured vertices induce a $\mu$-quasirandom subgraph.

\newcounter{DeltaColouring}
\setcounter{DeltaColouring}{\thei}
\addtocounter{i}{1}
\begin{lemma}\label{lem:colouring}
Let $G$ be a $\Delta$-regular $\delta$-sparse graph and let $C$ be a $k$-correspondence assignment for $G$. Also let $\gamma > 0$ satisfy $$\gamma < \frac{\Delta\delta}{2k}e^{-\frac{\Delta}{k}} - \frac{\Delta^2\delta^{\frac{3}{2}}}{6k^2} e^{-\frac{7\Delta}{8k}}.$$ There exists an integer $\Delta_\theDeltaColouring(\delta,\gamma)$ such that if $\Delta \geq \Delta_\theDeltaColouring(\delta,\gamma)$, then there is a $\mu$-quasirandom subgraph $G'$ of $G$, and a $k'$-correspondence assignment $C'$ of $G'$ such that any $C'$-colouring of $G'$ extends to a $C$-colouring of $G$, where $\mu = 1 - (1-\frac{1}{2k})^\Delta$ and $ k' \geq k- (1-\mu - \gamma)\Delta$.
\end{lemma}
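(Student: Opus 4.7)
The plan is to apply Procedure~\ref{proc:ncp} once to $(G,C)$, let $G'$ be the subgraph induced by the vertices that remain uncoloured, and define $C'(v) \subseteq C(v)$ to consist of those colours $c$ for which no coloured neighbour $w$ of $v$ satisfies $C_{vw}(c) = f(w)$. By construction every $C'$-colouring of $G'$ extends the partial colouring to a $C$-colouring of $G$. To bound $|C'(v)|$ from below, for each $c \in C(v)$ set $a_c = |\{w \in N(v) \text{ coloured}: C_{vw}(c) = f(w)\}|$, so that $k - |C'(v)| = |\{c: a_c \geq 1\}|$. The Bonferroni inequality then gives
\[
|C'(v)| \;\geq\; k - \sum_c a_c + \sum_c \binom{a_c}{2} - \sum_c \binom{a_c}{3} \;=\; k - \Delta + d_{G'}(v) + P_v - T_v,
\]
where $P_v = \sum_c \binom{a_c}{2}$ and $T_v = \sum_c \binom{a_c}{3}$. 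The target list-size bound is equivalent to $d_{G'}(v) + P_v - T_v \geq (\mu + \gamma)\Delta$, so the whole task reduces to establishing this inequality, together with the quasirandomness condition on the counts $|N(u) \cap N(v) \cap V(G')|$, with positive probability over the random choices of the procedure.

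The second step computes the expectations of these random variables. Proposition~\ref{prop:prob} and linearity give $\mathbb{E}[d_{G'}(v)] = \mu\Delta$ and $\mathbb{E}|N(u)\cap N(v) \cap V(G')| = \mu|N(u)\cap N(v)|$. For $P_v$, totality of $C$ makes the marginal probability that $f_1(w_i) = C_{vw_i}(c)$ equal to $1/k$ for each neighbour $w_i$ and each $c \in C(v)$, and the probability that $w_i$ then remains coloured is essentially $(1-\tfrac{1}{2k})^\Delta$ up to lower-order corrections from edges inside $N(v)$; for non-adjacent pairs $\{w_1,w_2\} \subseteq N(v)$ the two retention events are essentially independent. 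Summing over $c$ and over the at least $\delta\binom{\Delta}{2}$ non-edges in $N(v)$ gives
\[
\mathbb{E}[P_v] \;\gtrsim\; \frac{\delta\Delta^2}{2k}\Bigl(1-\tfrac{1}{2k}\Bigr)^{2\Delta} \;\approx\; \frac{\delta\Delta^2}{2k}\,e^{-\Delta/k},
\]
which (after dividing by $\Delta$) matches the first term of the bound on $\gamma$. Adjacent pairs contribute no more than this because of the extra uncolouring risk along the edge $w_1w_2$. A similar but cruder computation, bundling triples of neighbours and using that $|E(G[N(v)])| \leq \binom{\Delta}{2}$, yields an upper bound on $\mathbb{E}[T_v]$ whose asymptotic form matches the subtracted term $\tfrac{\Delta^2\delta^{3/2}}{6k^2}\,e^{-7\Delta/(8k)}$. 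Combining these, $\mathbb{E}[d_{G'}(v) + P_v - T_v]$ exceeds $(\mu+\gamma)\Delta$ by a strict positive margin, leaving room for deviations.

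The final step is concentration combined with the Lov\'asz Local Lemma. Each of $d_{G'}(v)$, $P_v$, $T_v$, and $|N(u)\cap N(v)\cap V(G')|$ is a Lipschitz function of the random variables $\{f_1(x)\}_x$ and $\{D(e)\}_e$ with small certifying sets, so Talagrand's inequality shows each deviates from its expectation by at most $\sqrt{\Delta}(\log\Delta)^5$ except with probability superpolynomially small in $\Delta$. Each of these bad events depends only on the random choices within a ball of radius $O(1)$ around the relevant vertex or pair, so is mutually independent of all but polynomially many others, and the local lemma condition $4pd \leq 1$ is easily satisfied for $\Delta \geq \Delta_\theDeltaColouring(\delta,\gamma)$. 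This produces an outcome witnessing the desired $G'$ and $C'$. The main obstacle is the expectation computation for $P_v$ and $T_v$ in the correspondence-colouring setting: although totality of $C$ keeps marginal probabilities uniform, the retention events for adjacent neighbours interact with the correspondence along the edge, and extracting the sharp coefficients (in particular the $\delta^{3/2}$ term) requires careful tracking of how edges inside $N(v)$ interact with the colour correspondences; adding the common-neighbour variables needed for quasirandomness into the local lemma setup is then routine.
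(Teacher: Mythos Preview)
Your overall architecture matches the paper's: apply Procedure~\ref{proc:ncp} once, take $G'$ to be the uncoloured subgraph, define $C'$ by removing from each $C(v)$ the colours blocked by coloured neighbours, bound $|C'(v)|$ from below by a pairs-minus-triples inclusion--exclusion, compute expectations, concentrate, and close with the Lov\'asz Local Lemma together with the common-neighbourhood variables $|N(u)\cap N(v)\cap V(G')|$ for quasirandomness.

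The genuine gap is in your upper bound on $\mathbb E[T_v]$. You set $T_v=\sum_c\binom{a_c}{3}$, which counts \emph{all} triples of coloured neighbours whose colours correspond to the same $c\in C(v)$, and then assert that this matches $\tfrac{\Delta^2\delta^{3/2}}{6k^2}e^{-7\Delta/(8k)}$. But summing over all $\binom{\Delta}{3}$ triples gives a leading term of order $\Delta^3/k^2$ with no dependence on $\delta$ at all, which is far too large to subtract. The $\delta^{3/2}$ factor in the paper comes from restricting both $P_u$ and $T_u$ to \emph{pairwise non-adjacent} pairs and triples in $N(u)$: since $G$ is $\delta$-sparse, the complement of $G[N(u)]$ has at most $\delta\binom{\Delta}{2}$ edges, and Rivin's bound then caps the number of its triangles---equivalently, the number of independent triples in $G[N(u)]$---by $\delta^{3/2}\Delta^3/6$. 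Your remark ``using that $|E(G[N(v)])|\le\binom{\Delta}{2}$'' does not recover this factor. If you do switch to the restricted $P_v$ and $T_v$, note that the Bonferroni identity you wrote no longer applies verbatim and needs its own justification. A secondary point: the concentration is harder than a plain Lipschitz--Talagrand argument, since resampling a single $f_1(w)$ can simultaneously uncolour every neighbour of $w$ whose colour happens to correspond at $w$; the paper handles this with the Bruhn--Joos ``exceptional outcomes'' variant of Talagrand (Theorem~\ref{thm:BruhnJoosTalagrand}), declaring as exceptional those outcomes in which some nearby vertex has $\geq\log\Delta$ neighbours agreeing on one corresponding colour.
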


\begin{proof}[Proof of Lemma~\ref{lem:colouring}]

We may assume that for each vertex $u \in V(G)$, the set $C(u)$ has size precisely $k$ (by restricting to an arbitrary subset of $C(u)$ of size $k$). Furthermore, we may assume that $C$ is a total correspondence assignment (by extending, for each edge $uv$, the function $C_{uv}$ to an arbitrary total function and setting $C_{vu}= C^{-1}_{uv}$). Note the latter two assumptions only restrict the possible set of $C$-colourings.

Now consider an application of Procedure~\ref{proc:ncp} to the graph $G$, which produces a partial $C$-colouring $f$ of $G$. Let $G'$ be the subgraph of $G$ induced by the uncoloured vertices, and let $C'$ be the correspondence assignment obtained from $C$ as follows: For each $u \in V(G')$, let $C'(u) := C(u) \setminus \{C_{vu}(f(v)): v\in N_G(u)\setminus V(G')\}$. To every edge $uv$ in $E(G')$, let $C'$ assign the map $C'_{uv}$, where $C'_{uv}$ is the restriction of $C_{uv}$ to $C'(u)$ and $C'(v)$. 

We set $k' = \min_{u \in V(G')} |C'(u)|$. Note that every $C'$-colouring $\phi'$ of $G'$ can be extended to a $C$-colouring $\phi$ of $G$ by letting $\phi(v) = \phi'(v)$ if $v\in V(G')$ and $\phi(v)=f(v)$ otherwise. Moreover, we could truncate each $C'(u)$ to an arbitrary subset of size $k'$ restricting further the possible $C'$-colorings. However, this is not technically needed since the definition of $k$-correspondence assignment we use requires only lists of size at least (not necessarily equal to) $k$. 

It remains to show that both of the following hold with high probability: $V(G')$ is $\mu$-quasirandom; and $k'\geq k - (1-\mu - \gamma)\Delta$. 

To this end we define a collection of events and random variables. Firstly, for each pair of vertices $u,v \in V(G)$ such that the distance from $u$ to $v$ is at most $2$, we define a random variable $N_{u,v}$ by $N_{u,v} = |N(u) \cap N(v) \cap V(G')|$. In particular, for a vertex $u \in V(G)$, we have $N_{u,u} = d_{V(G')}(u)$. Let $B_{u,v}$ be the event that $$|N_{u,v} - \mu |N(u)\cap N(v)|| \geq \sqrt{\Delta}(\log \Delta)^5.$$ 

We show that the probabilities of all these bad events are small in the following two claims.

\begin{claim}\label{cl:Nuv}
For every $u,v \in V(G)$, we have $\mathbb{P}[B_{u,v}] \leq \Delta^{-\frac12 \log \log \Delta}.$
\end{claim}
\begin{proofclaim}
By Proposition~\ref{prop:prob}, we have $\mathbb{E}[N_{u,v}] = \mu|N(u) \cap N(v)|$. Thus $\mathbb{P}[B_{u,v}]=\mathbb{P}[|N_{u,v} - \mathbb{E}[N_{u,v}]| \geq \sqrt{\Delta}(\log \Delta)^5]$. In Section~\ref{sec:concentration} we argue that the random variable $N_{u,v}$ is highly concentrated about its expectation. More precisely, it follows from Lemma~\ref{lem:NuvConcentration} that $$\mathbb{P}[|N_{u,v} - \mathbb{E}[N_{u,v}]| \geq \sqrt{\Delta}(\log \Delta)^5] \leq \Delta^{-\frac{1}{2}\log \log \Delta}.$$ Hence the conclusion.
\end{proofclaim}

For every vertex $u \in V(G)$, let 
$$P_u := |\{v_1v_2\in E(\overline{G}[N(u)]): C_{v_1u}(f_1(v_1)) = C_{v_2u}(f_1(v_2)), v_1,v_2\notin V(G')\}|.$$ 
\noindent That is, $P_u$ denotes the number of non-adjacent pairs of vertices in $N(u)$ whose colours under $f$ correspond to the same colour at $u$. For a graph $H$, let $T(H)$ denote the set of triangles of $H$. For every vertex $u\in V(G)$, let 

$$T_u := |\{v_1v_2v_3\in T(\overline{G}[N(u)]): C_{v_1u}(f_1(v_1)) = C_{v_2u}(f_1(v_2)) = C_{v_3u}(f_1(v_3)), v_1,v_2,v_3\notin V(G')\}|.$$ 
\noindent That is, $T_u$ denotes the number of non-adjacent triples of vertices in $N(u)$ whose colours under $f$ correspond to the same colour at $u$. 

For convenience, for each $u\in V(G)$, let $\delta_u > \delta$ be a fixed constant such that $N(u)$ induces precisely $(1 - \delta_u) {\Delta \choose 2}$ edges.

We begin by finding a lower bound on the expectation of $P_u$ as follows.

\begin{claim}\label{cl:P}
For each vertex $u\in V(G)$, we have
$$\mathbb{E}[P_u]\geq (1-o(1)) \cdot \delta_u \frac{\Delta^2}{2k}e^{-\frac{\Delta}{k}},$$
where $o(1)$ denotes a function that tends to $0$ as $\Delta$ tends to infinity.
\end{claim}
\begin{proofclaim}

Let $c\in C(u)$. First let $v_1$ and $v_2$ be non-adjacent neighbours of $u$. Let $c_1 = C_{uv_1}(c)$ and $c_2=C_{uv_2}(c)$.

Note that
$$\mathbb{P}[f(v_1)=c_1,f(v_2)=c_2] = \mathbb{P}[f_1(v_1)=c_1,f_1(v_2) = c_2] \cdot \mathbb{P}[v_1,v_2\not\in V(G')| f_1(v_1)=c_1,f_1(v_2)=c_2].$$
Yet 
$$\mathbb{P}[f_1(v_1)=c_1, f_1(v_2) = c_2] = \mathbb{P}[f_1(v_1)=c_1] \cdot \mathbb{P}[f_1(v_2)=c_2] = \frac{1}{k^2},$$ 
since the events are independent. 

Thus we proceed to calculate $\mathbb{P}[v_1,v_2\not\in V(G')| f_1(v_1)=c_1, f_1(v_2)=c_2]$ as follows. For each $xy\in E(G)$, let $U_{x,y}$ be the event that $C_{xy}(f_1(x))=f_1(y)$ and $D(xy)=x$ (that is the event that $y$ `uncolours' $x$). Note for each $xy\in E(G)$, $\mathbb{P}[U_{x,y}] = \frac{1}{2k}$ and hence $\mathbb{P}[\overline{U_{x,y}}]= 1 - \frac{1}{2k}$. 

\begin{align*}
\mathbb{P}[v_1,& v_2\not\in V(G')| f_1(v_1)=c_1, f_1(v_2)=c_2]\\
&= \mathbb{P}\Big[ \Big( \bigcap_{w\in N(v_1)} \overline{U_{v_1,w}} \Big) \cap \Big (\bigcap_{x\in N(v_2)} \overline{U_{v_2,x}} \Big)\Big]   \\ 
&= \prod_{w\in N(v_1)\setminus N(v_2)} \mathbb{P}[ \overline{U_{v_1,w}}] \ \times \prod_{x\in N(v_2)\setminus N(v_1)} \mathbb{P}[\overline{U_{v_2,x}}] \ \times \prod_{y\in N(v_1)\cap N(v_2)} \mathbb{P}[\overline{U_{v_1,y}} \cap \overline{U_{v_2,y}}].
\end{align*}

 For each $y\in N(v_1)\cap N(v_2)$, we have by the union bound that $\mathbb{P}[U_{v_1,y} \cup U_{v_2,y}] \le \frac{1}{k}$ and hence $\mathbb{P}[\overline{U_{v_1,y}}\cap \overline{U_{v_2,y}}] \ge 1 - \frac{1}{k}$.

Let $|N(v_1)\cap N(v_2)| = \ell$. Since $G$ is $\Delta$-regular, we have that $|N(v_1)\cap N(v_2)|=|N(v_2)\cap N(v_1)| = \Delta-\ell$. Hence 
$$\mathbb{P}[v_1, v_2 \notin V(G')| f_1(v_1)=f_1(v_2)=c] \geq \left( 1-\frac{1}{2k} \right)^{2\Delta-2\ell} \cdot \left( 1-\frac{1}{k} \right)^\ell \geq \left(1-\frac{1}{k}\right)^\Delta.$$

Thus for each $c\in C(u)$
$$\mathbb{P}[C_{v_1u}(f(v_1))=C_{v_2u}(f(v_2))=c] \geq \frac{1}{k^2} \cdot \left(1-\frac{1}{k}\right)^\Delta.$$

Since $|C(u)|=k$, we find that 

$$\mathbb{P}[C_{v_1u}(f(v_1))=C_{v_2u}(f(v_2))] \geq \frac{1}{k} \cdot \left(1-\frac{1}{k}\right)^\Delta.$$

Yet

$$\mathbb{E}[P_u] = \sum_{v_1v_2 \in E(\overline{N(u)})} \mathbb{P}[C_{v_1u}(f(v_1))=C_{v_2u}(f(v_2))].$$

As there are precisely $\delta_u {\Delta \choose 2}$ non-adjacent pairs in $N(u)$, we conclude that
\begin{align*}
\mathbb{E}[P_u]&\geq \delta_u {\Delta \choose 2} \cdot \frac{1}{k}\cdot \left(1-\frac{1}{k}\right)^\Delta\\
&\geq (1-o(1)) \cdot \delta_u \frac{\Delta^2}{2k}e^{-\frac{\Delta}{k}},
\end{align*}
as desired, where the last inequality follows because the two inequalities that $\gamma$ is assumed to satisfy imply that $k = \Theta(\Delta)$.
\end{proofclaim}

We now compute an upper bound on the expectation of $T_u$, the number of non-adjacent triples of vertices in $N(u)$ whose colours under $f$ correspond to the same colour at $u$, as follows.

\begin{claim}\label{cl:T}
For each vertex $u\in V(G)$, we have
$$\mathbb{E}[T_u]\leq \frac{\Delta^3\delta^{\frac{3}{2}}}{6k^2} e^{-\frac{7\Delta}{8k}}.$$
\end{claim}
\begin{proofclaim}
Let $c\in C(u)$. Let $v_1,v_2,v_3 \in T(\overline{G}[N(u)])$. For $i\in \{1,2,3\}$, let $\ell_i$ denote the number of vertices in $N(v_1) \cup N(v_2) \cup N(v_3)$ that have precisely $i$ neighbours in $\{v_1,v_2,v_3\}$. Note that $\ell_1+2\ell_2+3\ell_3=3\Delta$. 

We now proceed with an analysis similar to that for the pairs. Let $c_1=C_{uv_1}(c)$, $c_2=C_{uv_2}(c)$ and $c_3=C_{uv_3}(c)$. 

Since 

$$\mathbb{P}[f_1(v_1)=c_1,f_1(v_2)=c_2,f_1(v_3) = c_3]=\frac1{k^3},$$

it suffices to compute 
$$ \mathbb{P}[v_1,v_2,v_3\not\in V(G')| f_1(v_1)=c_1,f_1(v_2)=c_2,f_1(v_3)=c_3].$$

For each $y$ with precisely two neighbours in $\{v_1,v_2,v_3\}$, say $y\in N(v_1)\cap N(v_2)$ and $y \not\in N(v_3)$, we have $\mathbb{P}[U_{v_1,y} \cup U_{v_2,y}] \ge \frac{3}{4k}$. Indeed, $\mathbb{P}[U_{v_1,y} \cap U_{v_2,y}]=\mathbb{P}[f_1(y)=C_{v_1y}(c_1)= C_{v_2y}(c_2)] \cdot \mathbb{P}[D(yv_1)=v_1 \textrm{ and } D(yv_2)=v_2]$. Note that $\mathbb{P}[D(yv_1)=v_1 \textrm{ and } D(yv_2)=v_2]=\frac34$. The value of $\mathbb{P}[f_1(y)=C_{v_1y}(c_1)= C_{v_2y}(c_2)]$ is either $0$ (if $C_{v_1y}(c_1) \neq C_{v_2y}(c_2)$) or $\frac1{k}$ (if $C_{v_1y}(c_1) = C_{v_2y}(c_2)$). In both cases, $\mathbb{P}[U_{v_1,y} \cap U_{v_2,y}]\leq \frac1{4k}$, hence $\mathbb{P}[U_{v_1,y} \cup U_{v_2,y}]\geq \mathbb{P}[U_{v_1,y}]+\mathbb{P}[U_{v_2,y}] - \frac1{4k}=\frac3{4k}$. It follows that $\mathbb{P}[\overline{U_{v_1,y}}\cap \overline{U_{v_2,y}}] \le 1 - \frac{3}{4k}$.

Similarly, for each $y\in N(v_1)\cap N(v_2) \cap N(v_3)$, we have $\mathbb{P}[U_{v_1,y} \cup U_{v_2,y}\cup U_{v_3,y}] \ge \frac{7}{8k}$ and hence $\mathbb{P}[\overline{U_{v_1,y}}\cap \overline{U_{v_2,y}}\cap \overline{U_{v_3,y}}] \le 1 - \frac{7}{8k}$.

We find that the probability that $v_1,v_2,v_3 \in V(G)\setminus V(G')$ is at most $\left(1-\frac{1}{2k}\right)^{\ell_1}\cdot \left(1-\frac{3}{4k}\right)^{\ell_2}\cdot \left(1-\frac{7}{8k}\right)^{\ell_3}$. Since we can check that $(1-\frac{1}{2k})^3\leq (1-\frac{7}{8k})$ and $(1-\frac{3}{4k})^2\leq (1-\frac{7}{8k})$, we find that the probability that $v_1,v_2,v_3 \in V(G)\setminus V(G')$ is at most:

$$\left(1-\frac{1}{2k}\right)^{\ell_1}\cdot \left(1-\frac{3}{4k}\right)^{\ell_2}\cdot \left(1-\frac{7}{8k}\right)^{\ell_3}\leq  \left(1-\frac{7}{8k}\right)^{\ell_1/3+2\ell_2/3+\ell_3}=\left(1-\frac{7}{8k}\right)^{\Delta}.$$

A result of Rivin~\cite{Rivin} states that every graph with $\delta_u {\Delta \choose 2}$ edges contains at most $\frac{\delta^{3/2}_u\Delta^3}{6}$ triangles. Thus, for large enough $\Delta$,
$$\mathbb{E}[T_u]\leq \frac{\delta^{3/2}_u\Delta^3}{6}
\cdot k \cdot \frac{1}{k^3} \cdot \left(1-\frac{7}{8k}\right)^{\Delta} \leq  \frac{\Delta^3\delta^{\frac{3}{2}}}{6k^2} e^{-\frac{7\Delta}{8k}},$$
as desired.
\end{proofclaim}

Now, using linearity of expectation and Claims~\ref{cl:P} and~\ref{cl:T}, we have
\begin{align*}
\mathbb{E}[P_u - T_u] &\geq (1-o(1)) \cdot \frac{\Delta^2\delta_u}{2k}e^{-\frac{\Delta}{k}} -  \frac{\Delta^3\delta_u^{\frac{3}{2}}}{6k^2} e^{-\frac{7\Delta}{8k}}\\
&\geq (1-o(1)) \left( \frac{\Delta\delta}{2k}e^{-\frac{\Delta}{k}} - \frac{\Delta^2\delta^{\frac{3}{2}}}{6k^2} e^{-\frac{7\Delta}{8k}}\right)\Delta.
\end{align*}

As discussed after Proposition~\ref{prop:partial}, the value of $P_u-T_u$ is a lower bound on the number of repeated colours. Let $A_u$ be the event that $$P_u - T_u \leq \left( 1 - \frac{1}{\log \Delta}\right) \left( \frac{\Delta\delta}{2k}e^{-\frac{\Delta}{k}} - \frac{\Delta^2\delta^{\frac{3}{2}}}{6k^2} e^{-\frac{7\Delta}{8k}}\right)\Delta.$$ 

\begin{claim}\label{cl:Au}
For every $u \in V(G)$, we have $\mathbb{P}[A_u] \leq 2\Delta^{-\frac12 \log \log \Delta}.$
\end{claim}
\begin{proofclaim}
We argue in Section~\ref{sec:concentration} that the random variables $P_u$ and $T_u$ are highly concentrated about their expectations. More precisely, Lemmas~\ref{lem:PuConcentration} and~\ref{lem:TuConcentration} state that for large enough $\Delta$ we have that 

$$\mathbb{P}[|P_u - \mathbb{E}[P_u]| \geq \sqrt{\Delta} \log^4 \Delta ] \leq \Delta^{-\frac12 \log \log \Delta}$$ and $$\mathbb{P}[|T_u - \mathbb{E}[T_u]| \geq \sqrt{\Delta} \log^5 \Delta ] \leq \Delta^{-\frac12 \log \log \Delta}.$$

Thus, it follows from Claims~\ref{cl:P} and~\ref{cl:T} that for $\Delta$ large enough, we have $\mathbb{P}[A_u] \leq 2 \Delta^{-\frac12 \log \log \Delta}$ as desired.
\end{proofclaim}

Let $\mathcal{A}$ and $\mathcal{B}$ denote the set of events of the form $A_u$ and $B_{u,v}$ respectively. For $x,y\in V(G)$, let $d_G(x,y)$ denote the distance from $x$ to $y$ in $G$. Note that $A_u$ only depends on random variables in $\{f_1(w) : d_G(u,w)\le 2\} \cup \{D(wx): d_G(u,w)\le 1\}$ and similarly $B_{u,v} $ depends only on random variables in $\{f_1(w) : d_G(u,w)\le 2\}\cup \{D(wx): d_G(u,w)\le 1$. 

Let $d= \Delta^9$. A routine calculation show that for each $x\in V(G)$ and integer $i\ge 0$, we have $|\{y: d(x,y)\le i\}| \le \Delta^{i+1}.$ 

For each $u\in V(G)$, let $\Dep(A_u) := \{ A_v: d_G(v,u)\le 4\}\cup \{B_{v,w}: d_G(v,u)\le 4\}$. Note that, by the mutual independence principle, $A_u$ is mutually independent of all events in $(\mathcal{A}\cup \mathcal{B}) \setminus \Dep(A_u)$. Furthermore $|\Dep(A_u)| \le \Delta^5 + \Delta^5 \cdot \Delta^3 \le \Delta^9$ since $\Delta\ge 2$. Hence $|\Dep(A_u)| \le d$. By Claim~\ref{cl:Au}, it follows that $\mathbb{P}[A_u] \le 2\Delta^{-\frac12 \log \log \Delta} \le \frac{1}{4d}$ where the last equality follows since $\Delta$ is large enough. 

Similarly for each $u,v$ such that $d_G(u,v)\le 2$, let $\Dep(B_{u,v}) := A_w: d_G(w,u)\le 4\}\cup \{B_{w,x}: d_G(w,u)\le 4\}.$ Note that, by the mutual independence principle, $B_{u,v}$ is mutually independent of all events in $(\mathcal{A}\cup \mathcal{B}) \setminus \Dep(B_{u,v})$. Furthermore as above $|\Dep(B_{u,v})| \le  \Delta^5 + \Delta^5\cdot\Delta^3 \le \Delta ^9 = d$. By Claim~\ref{cl:Au}, it follows that $\mathbb{P}[A_u] \le \Delta^{-\frac12 \log \log \Delta} \le \frac{1}{4d}$ where the last equality follows since $\Delta$ is large enough. 

The Lov\'{a}sz Local Lemma then implies that with positive probability none of the events in $\mathcal{A} \cup \mathcal{B}$ occur. Since no event in $\mathcal{B}$ occurs, the uncoloured subgraph $G'$ is $\mu$-quasirandom. In particular, every vertex $u \in V(G)$ has at most $(1-\mu)\Delta + \sqrt{\Delta}(\log \Delta)^5$ coloured neighbours. Similarly, since no event in $\mathcal{A}$ occurs, we have that, if $\Delta$ is large enough, then $P_u - T_u \geq \gamma \Delta + \sqrt{\Delta}(\log \Delta )^5$ for every $u \in V(G')$.

Now, the $k'$-correspondence assignment $C'$ of $G'$ satisfies $$|C'(u)| \geq k - ((1-\mu)\Delta + \sqrt{\Delta}(\log \Delta)^5) + P_u - T_u$$ at every vertex $u \in V(G')$. Thus, we have $k' \geq k - (1-\mu - \gamma)\Delta$ as desired.
\end{proof}

\subsection{Concentration Details}\label{sec:concentration}

In this section we prove the concentration results required in the proof of Lemma~\ref{lem:colouring}. Our main tool will be a modified version of Talagrand's inequality, developed by Bruhn and Joos~\cite{BruhnJoos}.

Consider a random variable $X$ determined by a set of independent trials. If changing the outcome of a small number of trials does not affect $X$ very much, then a well known concentration inequality may apply. Unfortunately, in the naive colouring procedure, changing the colour of one vertex can have a large effect. Indeed changing the colour of a vertex $u$ may cause all vertices in $N(u)$ to lose their colour during Step~\ref{step:Uncolour} of Procedure~\ref{proc:ncp}. However such an outcome is very unlikely, since it requires that the colours assigned to the vertices in $N(u)$ all correspond to the same colour at $u$. 

Bruhn and Joos~\cite{BruhnJoos} developed a version of Talagrand's Inequality capable of handling such outcomes. To describe it, let $\Omega$ be a product space of discrete probability spaces, and define a set $\Omega^* \subseteq \Omega$ of \emph{exceptional} outcomes. We say that \emph{$X$ has downward $(s,c)$-certificates} if for every $t>0$, and for every $\omega\in\Omega\setminus\Omega^*$ there is an index set $I$ of size at most $s$ so that $X(\omega')< X(\omega)+t$ for every $\omega'\in\Omega\setminus\Omega^*$ where the restrictions $\omega|_I$ and ${\omega'}|_I$ differ in less than $t/c$ coordinates. 

In other words, for each non-exceptional outcome, there is a small index set which can guarantee that the random variable $X$ is not too much larger for similar outcomes. We can now state the theorem of Bruhn and Joos.

\begin{theorem}\label{thm:BruhnJoosTalagrand}\emph{\cite{BruhnJoos}}
Let $((\Omega_i,\Sigma_i,\mathbb{P}_i))_{i=1}^n$ be discrete probability spaces, $(\Omega,\Sigma,\mathbb{P})$ be their product space, and let $\Omega^*\subset \Omega$ be a set of exceptional outcomes.
Let $X:\Omega\to\mathbb R$ be a random variable, let $M=\max\{\sup |X|,1\}$, and let $c\geq 1$.
If $\mathbb{P}[\Omega^*]\leq M^{-2}$
and $X$ has downward $(s,c)$-certificates, then for $t> 50 c\sqrt{s}$, $$\mathbb{P}[|X- \mathbb{E}[X]|\geq t] \leq 4e^{-\frac{t^2}{16c^2s}}+4\mathbb{P}[\Omega^*].$$
\end{theorem}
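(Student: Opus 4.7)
The plan is to derive Theorem~\ref{thm:BruhnJoosTalagrand} by adapting Talagrand's classical convex-distance inequality to accommodate the exceptional set $\Omega^*$ and the asymmetric certificate hypothesis. The starting point is the decomposition
$$\mathbb{P}[|X - \mathbb{E}[X]| \geq t] \leq \mathbb{P}\bigl[\{|X - \mathbb{E}[X]| \geq t\} \cap (\Omega \setminus \Omega^*)\bigr] + \mathbb{P}[\Omega^*],$$
so that the second term is immediately absorbed into the $4\mathbb{P}[\Omega^*]$ summand of the conclusion, and only the non-exceptional part requires genuine concentration.

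For the non-exceptional part, I would reinterpret the downward $(s,c)$-certificate as a lower bound on Talagrand's convex distance $d_T$. Let $m$ be the median of $X$ restricted to $\Omega \setminus \Omega^*$, and set $A = \{\omega \in \Omega \setminus \Omega^* : X(\omega) \leq m\}$. For any non-exceptional $\omega'$ with $X(\omega') \geq m + t$, the certificate applied at each $\omega \in A$ with threshold $t$ produces an index set $I_\omega$ of size at most $s$ on which $\omega$ and $\omega'$ must disagree in at least $\lceil t/c \rceil$ coordinates, for otherwise $X(\omega') < X(\omega) + t \leq m + t$, a contradiction. A standard minimax (Hahn--Banach / LP-duality) argument then converts this family of pointwise witnesses into a single weight vector $\alpha$ with $\|\alpha\|_2 = 1$ satisfying $\sum_i \alpha_i \mathbf{1}[\omega_i \neq \omega'_i] \geq (t/c)/\sqrt{s}$ for every $\omega \in A$, so that $d_T(\omega', A) \geq (t/c)/\sqrt{s}$. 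Talagrand's isoperimetric inequality $\mathbb{P}[A] \cdot \mathbb{E}[\exp(d_T(\cdot, A)^2/4)] \leq 1$ combined with Markov's inequality then yields $\mathbb{P}[X \geq m + t,\, \omega \notin \Omega^*] \leq 2\exp(-t^2/(4c^2s))$, and a symmetric argument handles the lower tail by reversing the roles of $\omega$ and $\omega'$.

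The final step is to transfer the bound from the conditional median $m$ to the unconditional mean $\mathbb{E}[X]$. The hypothesis $\mathbb{P}[\Omega^*] \leq M^{-2}$ gives $|\mathbb{E}[X] - \mathbb{E}[X \mid \Omega \setminus \Omega^*]| \leq M \cdot \mathbb{P}[\Omega^*] \leq M^{-1} \leq 1$, while integrating the one-sided tail bound shows $|m - \mathbb{E}[X \mid \Omega \setminus \Omega^*]| = O(c\sqrt{s})$; the assumption $t > 50c\sqrt{s}$ is precisely calibrated so that both discrepancies are swallowed into $t$ while preserving the exponent $t^2/(16c^2 s)$ and the prefactor of $4$. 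The main technical obstacle I anticipate is the convex-distance lower bound in the middle step: the index sets $I_\omega$ depend on the point $\omega \in A$, so one must verify that a single weight vector $\alpha$ works simultaneously for all such $\omega$. This requires a careful minimax argument and is precisely where the proof of Bruhn and Joos deviates most from the classical Talagrand template; by contrast, the decomposition at the start and the median-to-mean transfer at the end are essentially bookkeeping, made clean by the explicit numerical thresholds in the hypothesis.
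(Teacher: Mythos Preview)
The paper does not prove Theorem~\ref{thm:BruhnJoosTalagrand}. It is quoted verbatim from Bruhn and Joos~\cite{BruhnJoos} and used as a black box; there is no argument in the present paper for you to compare against. So the relevant question is only whether your sketch stands on its own.

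There is a genuine gap in the middle step. You fix a high point $\omega'$ with $X(\omega')\ge m+t$, take $A=\{X\le m\}\setminus\Omega^*$, and apply the certificate at each $\omega\in A$, obtaining a family of index sets $I_\omega$ that depend on $\omega$. You then claim that a ``standard minimax/Hahn--Banach'' argument produces a single unit weight vector $\alpha$ working for all $\omega\in A$. But this is the wrong direction of the minimax inequality: what you have established is $\min_{\omega\in A}\sup_{\alpha}\langle\alpha,\mathbf{1}[\omega\ne\omega']\rangle\ge t/(c\sqrt{s})$, whereas Talagrand's convex distance is $d_T(\omega',A)=\sup_{\alpha}\min_{\omega\in A}\langle\alpha,\mathbf{1}[\omega\ne\omega']\rangle$. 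Since $\sup\min\le\min\sup$ in general, no duality argument gets you from the former to the latter, and there is no reason a single $\alpha$ should exist.

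The fix is to reverse the roles of the fixed point and the level set. Take $B=\{X\ge m+t\}\setminus\Omega^*$ and evaluate $d_T(\omega_0,B)$ at a \emph{low} point $\omega_0$ with $X(\omega_0)\le m$. Now apply the downward certificate at $\omega_0$: it yields one index set $I_{\omega_0}$, depending only on $\omega_0$, such that every $\omega'\in B$ (being non-exceptional with $X(\omega')\ge X(\omega_0)+t$) must differ from $\omega_0$ in at least $t/c$ coordinates of $I_{\omega_0}$. The uniform weight vector on $I_{\omega_0}$ then gives $d_T(\omega_0,B)\ge t/(c\sqrt{s})$ directly, with no minimax needed. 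Talagrand's inequality applied to $B$ together with $\mathbb{P}[\{X\le m\}\setminus\Omega^*]\ge\tfrac12-\mathbb{P}[\Omega^*]$ bounds the upper tail; the lower tail is symmetric with $B=\{X\ge m\}\setminus\Omega^*$ and $\omega_0$ satisfying $X(\omega_0)\le m-t$. Your median-to-mean transfer at the end is fine.
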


For each vertex $v\in V(G)$, let $\Omega_v$ denote the discrete probability space that is selecting a colour $f_1(v)$ from $C(v)$ uniformly at random. For each edge $uv\in E(G)$, let $\Omega_{uv}$ denote the discrete probability space that is selecting an end $D(uv)$ uniformly at random from $\{u,v\}$. Let $\Omega$ denote the product probability space $\prod_{v\in V(G)} \Omega_v \times \prod_{uv\in E(G)} \Omega_{uv}$. Thus each outcome $\omega \in \Omega$ is indexed by $V(G) \cup E(G)$. 

For each vertex $v\in V(G)$, let $Q_v$ be the set of outcomes $\omega \in \Omega$ such that there exists a subset $S$ of $N(v)$, $|S|\ge \log \Delta$, and $c\in C(v)$ such that $C_{wv}(f_1(w)) = c$ for all $w\in S$ (that is at least $\log \Delta$ vertices in $N(v)$ have colours corresponding to the same colour at $v$).

Let $u \in V(G)$ be a fixed vertex. We define the exceptional outcomes 
$$\Omega^* := \bigcup_{v: d(u,v)\le 2} Q_v.$$

\begin{lemma}\label{lem:OmegaStar}
For large enough $\Delta,$ 
$$\mathbb{P}[\Omega^*]\leq  \Delta^{-\frac23 \log \log \Delta}.$$
\end{lemma}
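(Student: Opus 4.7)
The plan is to apply the union bound twice: first, for each vertex $v$, bound $\mathbb{P}[Q_v]$ by summing over the choice of the colour $c \in C(v)$ and the subset $S \subseteq N(v)$; second, sum over the at most $1 + \Delta + \Delta(\Delta-1) \le \Delta^2$ vertices $v$ with $d_G(u,v) \le 2$. For the inner bound it suffices to consider only subsets $S$ of size exactly $s := \lceil \log \Delta \rceil$, since any set of integer size at least $\log \Delta$ has size at least $s$ and therefore contains such a subset on which the coincidence event still holds.

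For the inner bound, recall that in the setup of Lemma~\ref{lem:colouring} we assumed without loss of generality that $C$ is total and that $|C(w)| = k$ for every $w$. Hence each correspondence map $C_{wv}$ is a bijection between two $k$-sets, so for any fixed $c \in C(v)$ and any neighbour $w$ of $v$ there is a unique $c_w \in C(w)$ with $C_{wv}(c_w) = c$, and $\mathbb{P}[f_1(w) = c_w] = 1/k$. Since the random variables $f_1(w)$ are mutually independent across $w \in S$, this yields
\[
\mathbb{P}[Q_v] \;\le\; k \cdot \binom{\Delta}{s} \cdot k^{-s}.
\]

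To evaluate this, note that the two-sided constraint on $\gamma$ in Lemma~\ref{lem:colouring} forces $k = \Theta(\Delta)$, so $\Delta/k$ is bounded above by a constant. Using $\binom{\Delta}{s} \le (e\Delta/s)^s$, the right-hand side is at most $k \cdot (c/\log\Delta)^{\log\Delta}$ for some absolute constant $c$, and a short calculation gives $(c/\log\Delta)^{\log\Delta} = \Delta^{O(1) - \log\log\Delta}$. Multiplying by the $\Delta^2$ vertices in the outer union bound preserves this shape, so
\[
\mathbb{P}[\Omega^*] \;\le\; \Delta^{O(1) - \log\log\Delta}.
\]
Since $\log\log\Delta \to \infty$, for $\Delta$ large enough the additive $O(1)$ in the exponent is absorbed by the gap between $-\log\log\Delta$ and $-\tfrac{2}{3}\log\log\Delta$, yielding the desired bound.

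The only real obstacle is the exponent bookkeeping: I have to confirm that the constant arising from $\Delta/k$, from the $e\Delta/s$ factor in the binomial estimate, and from the $\Delta^2$ outer count all get swallowed by the difference $(1 - 2/3)\log\log\Delta = \tfrac{1}{3}\log\log\Delta$ once $\Delta$ is sufficiently large. Conceptually there is nothing deep here — the probability of a single coincidence decays like $\Delta^{-\log\log\Delta}$ while the number of $(v, c, S)$ triples to consider grows only like a polynomial in $\Delta$ times a sub-dominant $(e\Delta/s)^s$ factor, so polynomials lose to $\Delta^{\log\log\Delta}$ comfortably.
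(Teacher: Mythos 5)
Your argument is correct and follows essentially the same route as the paper: a first-moment/union bound on each $Q_v$ (which the paper simply cites from Bruhn--Joos, noting it generalises to correspondence colouring, while you derive it explicitly via the injectivity of the correspondence maps), followed by a union bound over the vertices within distance two of $u$ and the same exponent bookkeeping $\Delta^{O(1)-\log\log\Delta}\leq \Delta^{-\frac{2}{3}\log\log\Delta}$ for large $\Delta$. (One immaterial slip: the number of vertices at distance at most two is $1+\Delta+\Delta(\Delta-1)=\Delta^2+1$, not at most $\Delta^2$, which is absorbed into the polynomial prefactor anyway.)
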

\begin{proof}
First we calculate $\mathbb{P}[Q_v]$. This calculation can be found in~\cite[p. 18 (arXiv version)]{BruhnJoos} and trivially generalises to correspondence colouring. Hence we have the following:
$$\mathbb{P}[Q_v] \leq \Delta^2 (\frac{e \Delta}{k \log \Delta})^{\log \Delta}.$$
As the number of vertices at distance at most two from a vertex is at most $\Delta^2+1$, we have
\begin{equation}
\mathbb{P}[\Omega^*]\leq (\Delta^2+1)\Delta^2 \left(\frac{e\Delta}{k \log \Delta}\right)^{\log \Delta} = \Delta^{6+ \log\left(\frac{\Delta}{k}\right) - \log\log \Delta} \leq \Delta^{-\frac23 \log \log \Delta},
\end{equation}

\noindent as desired, where the last inequality follows since $\Delta$ is large enough. Note that the middle term is $\Delta^2$ times bigger than the probability obtained in~\cite{BruhnJoos}, but that increase is negligible given how fast it decreases in $\Delta$.
\end{proof}

\begin{proposition}\label{prop:PuCertificate}
For each $u \in V(G)$, the random variable $P_u$ has downward $(s, c)$-certificates where $s = 3\Delta$ and $c = \log^2 \Delta$.
\end{proposition}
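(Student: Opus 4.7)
The idea is to let $I$ record exactly the coordinates controlling which pairs in $N(u)$ contribute to $P_u$. For every $v \in N(u)$ we include $f_1(v)$ in $I$; additionally, whenever $v \in N(u)$ is uncoloured in $\omega$, fix one witness $w(v) \in N(v)$ with $U_{v, w(v)}$ occurring in $\omega$ (which exists since $v$ is uncoloured), and include both $f_1(w(v))$ and $D(v w(v))$ in $I$. Each $v \in N(u)$ contributes at most three coordinates, so $|I| \leq 3\Delta$.

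Now fix $t > 0$ and let $\omega' \notin \Omega^*$ with $\omega|_I$ and $\omega'|_I$ differing in $m < t/\log^2 \Delta$ coordinates. I decompose the pairs contributing to $P_u(\omega') \setminus P_u(\omega)$ into two types: \emph{Type A}, in which some endpoint has a changed $f_1$-value, and \emph{Type B}, in which $f_1$ is unchanged on both endpoints but at least one endpoint, uncoloured in $\omega$, becomes coloured in $\omega'$. Let $V_{\mathrm{ch}}, V_{\mathrm{nc}} \subseteq N(u)$ denote the corresponding vertex sets.

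For Type A, $|V_{\mathrm{ch}}| \leq m$, and since $\omega' \notin Q_u$, any $v \in V_{\mathrm{ch}}$ has strictly fewer than $\log \Delta$ matching partners in $N(u)$ under $\omega'$, giving at most $m(\log \Delta - 1)$ Type A new pairs. For Type B, each $v \in V_{\mathrm{nc}}$ requires $U_{v, w(v)}$ to fail in $\omega'$, which, since $f_1'(v) = f_1(v)$, forces a change in $f_1(w(v))$ or $D(v w(v))$ inside $I$. A $D$-coordinate change accounts for at most one $v \in V_{\mathrm{nc}}$, while a change in some $f_1(w)$ accounts for at most the $v \in N(u) \cap N(w)$ with $w(v) = w$; since $\omega \notin Q_w$ (where $w$ lies at distance at most $2$ from $u$), this count is less than $\log \Delta$. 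Hence $|V_{\mathrm{nc}}| \leq m(\log \Delta - 1)$, and another invocation of non-exceptionality of $\omega'$ at $u$ bounds matching partners per newly coloured vertex by $\log \Delta - 1$, yielding at most $m(\log \Delta - 1)^2$ Type B new pairs. Summing, the total is at most $m(\log \Delta - 1) \log \Delta < m \log^2 \Delta \leq t$, which gives $P_u(\omega') < P_u(\omega) + t$ as required.

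The main subtlety is the dual use of non-exceptionality --- at $u$ in $\omega'$ to cap matching partners, and at each witness $w(v)$ in $\omega$ to cap the number of vertices sharing a witness --- which is precisely why $\Omega^*$ is defined as the union of $Q_v$ over all $v$ at distance at most $2$ from $u$; any weaker version of $\Omega^*$ would not support both bounds simultaneously.
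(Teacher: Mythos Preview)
Your proof is correct and follows essentially the same approach as the paper's: the index set $I$ is identical, your sets $V_{\mathrm{ch}}$ and $V_{\mathrm{nc}}$ correspond precisely to the paper's $A_1$ and $B\setminus A_1$, and your Type~A/Type~B split mirrors the paper's bound $P_u(\omega')\le P_u(\omega)+\sum_{w\in A_1\cup B}P_u^w(\omega')$ followed by $|B\setminus A_1|\le |A_2|+|A_3|\log\Delta$. Your explicit remark on the dual use of non-exceptionality (at $u$ in $\omega'$ and at each witness $w(v)$ in $\omega$) is a nice addition that the paper leaves implicit.
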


The proof is almost identical to that of Bruhn and Joos~\cite[Lemma 7]{BruhnJoos} except that, since we deal with correspondence colouring, it is possible that a single vertex $v$ affects the colours of many vertices in $N(u)$, all of which correspond to different colours at $u$.

\begin{proof}
Let $\omega \in \Omega \setminus \Omega^*$. For every vertex $v\in N(u)\cap V(G')$, let $a_v$ denote a neighbour $w$ of $v$ such that $C_{wv}(f_1(w)(\omega))=f_1(v)(\omega)$ and $D(wv)(\omega) = v$ (such exist since $v\in V(G')$). Let $I= \big( \bigcup_{v\in N(u)} \Omega_v \big) \cup \big( \bigcup_{v\in N(u)\cap V(G')} (\Omega_{a_v} \cup \Omega_{va_v} ) \big)$. Note that $|I| \leq 3\Delta$.

To prove that $P_u$ has downward $(3\Delta, \log^2 \Delta)$-certificates it now suffices to prove the following claim.

\begin{claim}
For every $t > 0$ and $\omega' \in \Omega \setminus \Omega^*$ such that that $P_u(\omega') \geq P_u(\omega) + t$, then $\omega|_I$ and $\omega'|_I$ differ in at least $\frac{t}{\log^3 \Delta}$ coordinates. 
\end{claim}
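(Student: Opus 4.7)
The plan is to show that each coordinate in $I$ that is flipped between $\omega$ and $\omega'$ can introduce at most $\log^3\Delta$ new pairs into $P_u$, so that an increase of at least $t$ forces at least $t/\log^3\Delta$ flips. The first step is to observe that pairs of $P_u(\omega)$ that are lost in $\omega'$ only decrease $P_u$, so it suffices to upper-bound $|P_u(\omega')\setminus P_u(\omega)|$. Crucially, if $(v_1,v_2)$ lies in $P_u(\omega')\setminus P_u(\omega)$, then since $f_1(v_1)$ and $f_1(v_2)$ are coordinates of $I$, the correspondence condition $C_{v_1u}(f_1(v_1)) = C_{v_2u}(f_1(v_2))$ is invariant whenever these two coordinates agree in $\omega$ and $\omega'$. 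Moreover, every uncoloured vertex $v \in N(u)\cap V(G')$ remains uncoloured in $\omega'$ as long as the ``witness triple'' $(f_1(v), f_1(a_v), D(va_v))$ is preserved, because then $a_v$ continues to witness the uncolouring of $v$. Consequently, every new pair must be attributable to a flip of some coordinate in $I$, and I will bound the number of new pairs per flip.

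I would then run a case analysis on the three types of flipped coordinates in $J := \{i \in I : \omega_i \neq \omega'_i\}$. For a flip of $f_1(v)$ with $v \in N(u)$, any new pair $(v,v')$ must satisfy $C_{v'u}(f_1(v')(\omega')) = C_{vu}(f_1(v)(\omega'))$ with $v'$ coloured in $\omega'$, and since $\omega' \notin Q_u$ at most $\log\Delta$ such $v'$ exist. For a flip of $D(va_v)$ with $v \in N(u)\cap V(G')$, only $v$'s witness status can be directly affected, so if $v$ becomes newly coloured it contributes at most $\log\Delta$ new pairs. For a flip of $f_1(a_v)$, besides $v$ itself any other previously-uncoloured $w \in N(a_v)\cap N(u)$ whose \emph{only} witness was $a_v$ may also become coloured. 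Since $a_v$ lies at distance at most $2$ from $u$ and $\omega \notin Q_{a_v}$, at most $\log\Delta$ neighbours $w$ of $a_v$ satisfy $C_{wa_v}(f_1(w)(\omega)) = f_1(a_v)(\omega)$, which is exactly the colour condition for $a_v$ to serve as a witness of $w$ in $\omega$. Each such $w$ then creates at most $\log\Delta$ new pairs, so a single $f_1(a_v)$ flip contributes at most $\log^2\Delta$ new pairs.

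Taking a generous common bound of $\log^3\Delta$ new pairs per flipped coordinate, I would conclude $P_u(\omega') - P_u(\omega) \leq |J|\log^3\Delta$. The contrapositive of $P_u(\omega') \geq P_u(\omega) + t$ then gives $|J| \geq t/\log^3\Delta$, establishing the claim.

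The hardest part will be the $f_1(a_v)$ case: a single flip can in principle re-colour many vertices of $N(a_v)$, and the whole bound rests on excluding $Q_{a_v}$ from $\Omega^*$ to cap the candidate $w$'s at $\log\Delta$. One must also verify that $w$ actually becomes coloured only if $a_v$ was its \emph{unique} witness in $\omega$, so the relevant count is in fact a subset of this $\log\Delta$-bounded set. Everything else amounts to bookkeeping and to invoking $\omega,\omega'\notin\Omega^*$ at the right moments.
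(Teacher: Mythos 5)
Your argument is correct and follows essentially the same route as the paper: attribute every new pair either to a colour flip at a neighbour of $u$ or to a vertex that becomes re-coloured because its witness triple $(f_1(v),f_1(a_v),D(va_v))$ was disturbed, then use $\omega'\notin Q_u$ to cap the pairs per vertex at $\log\Delta$ and $\omega\notin Q_{a_v}$ to cap at $\log\Delta$ the vertices a single witness-colour flip can re-colour (your worry about \emph{unique} witnesses is unnecessary, since preservation of the chosen witness already keeps $v$ uncoloured). The only difference is bookkeeping: the paper aggregates through the sets $A_1,A_2,A_3,B$ and obtains the sharper bound $t/\log^2\Delta$, which is what Proposition~\ref{prop:PuCertificate} with $c=\log^2\Delta$ actually uses (the exponent $3$ in the claim statement is a typo), whereas your per-coordinate accounting yields the stated $t/\log^3\Delta$.
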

\begin{proofclaim}
First we characterize how the coordinates in $I$ may differ between $\omega$ and $\omega'$ as follows. Let 
$$A_1 := \{ v \in N(u): \Omega_v(\omega) ) \ne \Omega_v(\omega')\},$$ that is the set of neighbours of $u$ that have different colours under $f_1$ in $\omega$ versus $\omega'$. Similarly let 
$$A_2 := \{ v \in N(u): \Omega_{va_v}(\omega) \ne \Omega_{va_v}(\omega') \},$$
that is the neighbours of $u$ where $D(va_v)$ differs in $\omega$ and $\omega'$. Finally let 
$$A_3 := \{w\in V(G): \exists v\in N(u)\cap V(G'), w=a_v, \Omega_w(\omega) \ne \Omega_w(\omega')\},$$ that is the vertices $w\in V(G)$ for which $f_1(w)$ differs in $\omega$ and $\omega'$ and are also an $a_v$ for some $v\in N(u)\cap V(G')$. Note that $\omega|_I$ and $\omega'|_I$ differ in at most $|A_1|+|A_2|+|A_3|$ coordinates. 

Now let $$P_u^w := |\{wx\in E(\overline{G}[N(u)]): C_{wu}(f_1(w)) = C_{xu}(f_1(x)), w,x\notin V(G')\}|,$$ that is the number of pairs counted in $P_u$ in which $w$ appears. Let 
$$B=\{ v\in N(u): v\in V(G'(\omega))\setminus V(G'(\omega'))\},$$
that is the set of neighbours of $u$ that are in $V(G')$ in $\omega$ but not in $V(G')$ in $\omega'$.
Now
$$P_u(\omega') \le P_u(\omega) + \sum_{w\in A_1\cup B} P_u^w(\omega').$$

For each vertex $v\in V(G)$ and colour $c \in C(v)$, define 
$$N_{v,c}:= \{x\in N(v): C_{xv}(f_1(x))=c\},$$
\noindent that is the set of vertices $x\in N(v)$ whose colour in $f_1$ corresponds to colour $c$ at $x$.
Since $\omega, \omega' \not\in \Omega^*$, we have that $\omega,\omega' \not\in Q_v.$ This implies that for each vertex $v\in V(G)$ and colour $c\in C(v)$, we have 
$$|N_{v,c}| \le \log \Delta.$$ 
Note that if $w\in N(u)$ and we let $c=C_{wu}(\Omega_w(\omega'))$, then $P_u^w(\omega') \le |N_{u,c}| \le \log \Delta$, hence
$$P_u(\omega') \le P_u(\omega) + (|A_1|+|B\setminus A_1|)\log \Delta.$$
Yet
$$B\setminus A_1 \subseteq A_2 \cup \big( \bigcup_{w\in A_3} N_{w,\Omega_(w)}  (\omega) \ \big),$$
hence
$$|B\setminus A_1| \le |A_2| + |A_3|\log \Delta.$$
Combining, we have
$$t \le P_u(\omega')-P_u(\omega) \le (|A_1|+|B\setminus A_1|)\log \Delta \le (|A_1|+|A_2|+|A_3|)\log^2 \Delta.$$
Hence the number of coordinates in which $\omega|_I$ and $\omega'|_I$ differ is at least $\frac{t}{\log^2\Delta} = \frac{t}{c}$ as desired.
\end{proofclaim}

\end{proof}

\begin{lemma}\label{lem:PuConcentration}
If $\Delta$ is large enough, then $\mathbb{P}[|P_u - \mathbb{E}[P_u]| \geq \sqrt{\Delta} \log^4 \Delta ] \leq \Delta^{-\frac12 \log \log \Delta}.$
\end{lemma}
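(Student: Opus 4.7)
The plan is to apply the modified Talagrand inequality (Theorem~\ref{thm:BruhnJoosTalagrand}) to $P_u$ using the exceptional set $\Omega^*$ introduced before Lemma~\ref{lem:OmegaStar}, the certificate parameters $s = 3\Delta$ and $c = \log^2 \Delta$ provided by Proposition~\ref{prop:PuCertificate}, and the deviation $t = \sqrt{\Delta}\log^4 \Delta$. Since the structural work has already been carried out in those two results, what remains is essentially verifying the hypotheses of Theorem~\ref{thm:BruhnJoosTalagrand} and computing the resulting tail bound.

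First I would check the technical preconditions. The random variable $P_u$ counts pairs in $N(u)$, so $P_u \le \binom{\Delta}{2}$, and therefore the constant $M$ appearing in Theorem~\ref{thm:BruhnJoosTalagrand} satisfies $M \le \Delta^2$. The hypothesis $\mathbb{P}[\Omega^*] \le M^{-2}$ then reduces to $\mathbb{P}[\Omega^*]\le \Delta^{-4}$, which follows from Lemma~\ref{lem:OmegaStar} once $\Delta$ is large enough. The condition $t > 50c\sqrt{s}$ becomes $\sqrt{\Delta}\log^4 \Delta > 50 \sqrt{3\Delta}\,\log^2 \Delta$, i.e.\ $\log^2 \Delta > 50\sqrt 3$, which also holds for $\Delta$ sufficiently large.

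With the hypotheses verified, Theorem~\ref{thm:BruhnJoosTalagrand} gives
$$\mathbb{P}[|P_u - \mathbb{E}[P_u]| \geq t] \leq 4e^{-t^2/(16 c^2 s)} + 4 \mathbb{P}[\Omega^*].$$
Substituting our parameters, $t^2/(16 c^2 s) = (\Delta \log^8 \Delta)/(48\,\Delta \log^4 \Delta) = \log^4\Delta/48$, so the first term equals $4 e^{-\log^4 \Delta/48}$, which decays much faster than $\Delta^{-\frac12 \log\log \Delta}$. The second term is at most $4\Delta^{-\frac23 \log\log\Delta}$ by Lemma~\ref{lem:OmegaStar}, and is likewise $o\!\left(\Delta^{-\frac12 \log\log \Delta}\right)$. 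Summing the two contributions yields the desired bound for $\Delta$ large enough. There is no real obstacle at this stage: the conceptual difficulty sits inside Proposition~\ref{prop:PuCertificate}, where the downward certificates are constructed while taking care of the fact that under correspondence colouring a single vertex can influence several neighbours of $u$, and inside Lemma~\ref{lem:OmegaStar}, which controls precisely the exceptional outcomes that would otherwise spoil the certificate size.
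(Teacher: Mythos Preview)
Your proposal is correct and follows essentially the same route as the paper: apply Theorem~\ref{thm:BruhnJoosTalagrand} with $s=3\Delta$, $c=\log^2\Delta$, $t=\sqrt{\Delta}\log^4\Delta$, invoking Proposition~\ref{prop:PuCertificate} for the certificates and Lemma~\ref{lem:OmegaStar} to bound $\mathbb{P}[\Omega^*]$. If anything, you are slightly more careful, explicitly checking the condition $t>50c\sqrt{s}$ and tracking the factor of $4$ in front of $\mathbb{P}[\Omega^*]$.
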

\begin{proof}
We will apply Theorem~\ref{thm:BruhnJoosTalagrand} with $t=\sqrt{\Delta} \log^4 \Delta$, $s = 3\Delta$ and $c = \log^2 \Delta$ but first we check that the hypotheses of Theorem~\ref{thm:BruhnJoosTalagrand} are satisfied. 

Note that by Proposition~\ref{prop:PuCertificate}, $P_u$ has downward $(s,c)$-certificates. Next note  that $M = \sup P_u \leq \Delta^2$. By Lemma~\ref{lem:OmegaStar}, $\mathbb{P}[\Omega^*] \leq  \Delta^{-\frac23 \log \log \Delta}$ which is at most $\Delta^{-4}$ when $\Delta$ is large enough. Thus we have $\mathbb{P}[\Omega^*] \leq \Delta^{-4} \leq M^{-2}$. Hence all of the hypotheses of Theorem~\ref{thm:BruhnJoosTalagrand} are satisfied. 

Applying Theorem~\ref{thm:BruhnJoosTalagrand} with the parameters above, we conclude that for large enough $\Delta$, we have $$\mathbb{P}[|P_u - \mathbb{E}[P_u]| \geq \sqrt{\Delta} \log^4 \Delta ] \leq 4 \Delta^{- \frac{1}{48}{\log \Delta}} + \Delta^{-\frac23 \log \log \Delta},$$

which is at most $ \Delta^{-\frac12 \log \log \Delta} $ for large enough $\Delta$.
\end{proof}

In an analogous way one can show that the random variable $T_u$ is concentrated about its expectation. The only difference in the argument is that there could be up to ${\log \Delta \choose 3}$ triples of vertices in $N(u)$ which correspond to a fixed colour at $u$. Nevertheless, taking $t$ and $c$ to be $\log \Delta $ times larger than for $P_u$ above we obtain the following from Theorem~\ref{thm:BruhnJoosTalagrand}.

\begin{lemma}\label{lem:TuConcentration} If $\Delta$ is large enough, then $\mathbb{P}[|T_u - \mathbb{E}[T_u]| \geq \sqrt{\Delta} \log^5 \Delta ] \leq  \Delta^{-\frac12 \log \log \Delta}.$
\end{lemma}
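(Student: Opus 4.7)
The plan is to run the same Talagrand-style argument used for $P_u$ in Lemma~\ref{lem:PuConcentration}, applied to $T_u$ on the same product space $\Omega$ with the same exceptional set $\Omega^*$ defined before Lemma~\ref{lem:OmegaStar}. Concretely, I will apply Theorem~\ref{thm:BruhnJoosTalagrand} with parameters $t=\sqrt{\Delta}\log^5\Delta$, $s=3\Delta$, and $c=\log^3\Delta$, so the main work is to establish that $T_u$ has downward $(3\Delta,\log^3\Delta)$-certificates. Observe that $M=\max\{\sup|T_u|,1\}\le \Delta^3$ and, by Lemma~\ref{lem:OmegaStar}, $\mathbb{P}[\Omega^*]\le \Delta^{-\frac{2}{3}\log\log\Delta}\le M^{-2}$ once $\Delta$ is large, and $t>50c\sqrt{s}$ is immediate; so the only hypothesis that requires proof is the certificate condition.

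For the certificates I would reuse exactly the index set $I$ of Proposition~\ref{prop:PuCertificate}: for a fixed $\omega\in\Omega\setminus\Omega^*$ put $I=\bigcup_{v\in N(u)}\Omega_v \cup \bigcup_{v\in N(u)\cap V(G')}(\Omega_{a_v}\cup\Omega_{va_v})$, where $a_v$ is a neighbour of $v$ witnessing that $v$ is uncoloured in $\omega$. Then $|I|\le 3\Delta=s$. Given any $\omega'\in\Omega\setminus\Omega^*$ with $T_u(\omega')\ge T_u(\omega)+t$, partition the differing coordinates of $I$ into the analogous sets $A_1$ (colours of neighbours of $u$), $A_2$ (directions $D(va_v)$), and $A_3$ (colours of the $a_v$'s). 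Writing $T_u^w$ for the contribution to $T_u$ from triples containing $w$, and letting $B$ be the set of $v\in N(u)$ uncoloured in $\omega$ but coloured in $\omega'$, we still have
\[
T_u(\omega')-T_u(\omega)\ \le\ \sum_{w\in A_1\cup B} T_u^w(\omega').
\]
The key new ingredient, and the one genuinely different from the $P_u$ argument, is the bound on $T_u^w$: since $\omega'\notin\Omega^*$, for $w\in N(u)$ with $c=C_{wu}(f_1(w)(\omega'))$ we have $|N_{u,c}|\le\log\Delta$, so the number of triangles of $\overline{G}[N(u)]$ through $w$ with all three vertices corresponding to $c$ at $u$ is at most $\binom{|N_{u,c}|-1}{2}\le\log^2\Delta$. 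Combining this with the already proved estimate $|B\setminus A_1|\le|A_2|+|A_3|\log\Delta$ yields
\[
t\ \le\ T_u(\omega')-T_u(\omega)\ \le\ (|A_1|+|B\setminus A_1|)\log^2\Delta\ \le\ (|A_1|+|A_2|+|A_3|)\log^3\Delta,
\]
so the coordinates of $I$ on which $\omega$ and $\omega'$ differ number at least $t/c$ with $c=\log^3\Delta$, establishing the certificate.

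With the certificate in hand, Theorem~\ref{thm:BruhnJoosTalagrand} gives
\[
\mathbb{P}[|T_u-\mathbb{E}[T_u]|\ge \sqrt{\Delta}\log^5\Delta]\ \le\ 4\exp\!\Big(-\tfrac{\log^4\Delta}{48}\Big)+4\Delta^{-\frac{2}{3}\log\log\Delta},
\]
which is bounded above by $\Delta^{-\frac{1}{2}\log\log\Delta}$ for all sufficiently large $\Delta$. The main (and really only) obstacle is the $T_u^w$ estimate; everything else is a verbatim repetition of the $P_u$ argument with one extra factor of $\log\Delta$ propagated through $c$ and correspondingly through $t$.
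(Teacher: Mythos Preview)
Your proposal is correct and is exactly the approach the paper indicates: it states that the argument is analogous to that for $P_u$, with the only difference being that a fixed colour at $u$ can now account for up to $\binom{\log\Delta}{3}$ triples, and that one should take $t$ and $c$ a factor of $\log\Delta$ larger (so $s=3\Delta$, $c=\log^3\Delta$, $t=\sqrt{\Delta}\log^5\Delta$), which is precisely what you do. Your explicit bound $T_u^w(\omega')\le\binom{|N_{u,c}|-1}{2}\le\log^2\Delta$ is the correct form of the ``extra $\log\Delta$'' needed in the certificate, and the remaining verification is identical to Lemma~\ref{lem:PuConcentration}.
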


For the random variable $N_{u,v}$, $u,v \in V(G)$, we can take $c=\log \Delta$, $s=3 \Delta$ and $t= \sqrt{\Delta} \log^2 \Delta$. An argument analogous to that of Proposition~\ref{prop:PuCertificate} shows that $N_{u,v}$ has downward $(s,c)$-certificates. Then Theorem~\ref{thm:BruhnJoosTalagrand} implies the following.

\begin{lemma}\label{lem:NuvConcentration} If $\Delta$ is large enough, then $\mathbb{P}[|N_{u,v} - \mathbb{E}[N_{u,v}]| \geq \sqrt{\Delta} \log^5 \Delta ] \leq  \Delta^{-\frac12 \log \log \Delta}.$
\end{lemma}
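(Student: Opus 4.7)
The plan is to apply Theorem~\ref{thm:BruhnJoosTalagrand} to $N_{u,v}$ with parameters $s = 3\Delta$, $c = \log \Delta$, and $t = \sqrt{\Delta}\log^2 \Delta$, as suggested in the discussion preceding the lemma. The crux is to verify that $N_{u,v}$ has downward $(3\Delta, \log \Delta)$-certificates by an argument modelled on Proposition~\ref{prop:PuCertificate}, adapted to the fact that $N_{u,v}$ counts individual vertices rather than pairs.

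Given a non-exceptional outcome $\omega$, I would define the index set $I$ to consist of $\Omega_w$ for every $w \in N(u) \cap N(v)$, together with the witness coordinates $\Omega_{a_w}$ and $\Omega_{a_w w}$ for each $w \in N(u) \cap N(v)$ that is uncoloured in $\omega$, where $a_w$ is a chosen neighbour of $w$ certifying that status. Since $|N(u) \cap N(v)| \leq \Delta$, we have $|I| \leq 3\Delta$. Mirroring the $P_u$ argument, it suffices to show that for every non-exceptional $\omega'$ with $N_{u,v}(\omega') \geq N_{u,v}(\omega) + t$, the restrictions $\omega|_I$ and $\omega'|_I$ differ in at least $t/\log \Delta$ coordinates.

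Bounding $N_{u,v}(\omega') - N_{u,v}(\omega)$ reduces to bounding $|B'|$, where $B'$ is the set of vertices in $N(u) \cap N(v)$ that are coloured in $\omega$ but uncoloured in $\omega'$. I would partition $B'$ according to whether $\Omega_w$ changed, and in the remaining case trace the new uncolouring witness back to a change of $\Omega_{a_w}$ or $\Omega_{a_w w}$, exactly as in the $P_u$ proof. The non-exceptional bound $|N_{z,c}| \leq \log \Delta$ controls how many vertices in $N(u) \cap N(v)$ can be affected by a single coordinate change in $I$. The accounting then yields $|B'| \leq (\text{differences on } I) \cdot \log \Delta$, with the multiplicative constant $\log \Delta$ (as opposed to $\log^2\Delta$ for $P_u$) arising because each newly uncoloured vertex contributes only $1$ to $N_{u,v}$ rather than up to $\log \Delta$ pairs.

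With the certificate property in hand, the remaining hypotheses are routine: $M = \max\{\sup|N_{u,v}|,1\} \leq \Delta$, so $\mathbb{P}[\Omega^*] \leq \Delta^{-\frac{2}{3}\log\log\Delta} \leq M^{-2}$ by Lemma~\ref{lem:OmegaStar}, and $t \geq 50 c\sqrt{s}$ holds for $\Delta$ large. Theorem~\ref{thm:BruhnJoosTalagrand} then yields a bound of the form $4\exp(-\Theta(\log^2\Delta)) + 4\mathbb{P}[\Omega^*]$, which is at most $\Delta^{-\frac12 \log\log\Delta}$ for large $\Delta$; the stated lemma with threshold $\sqrt{\Delta}\log^5 \Delta$ follows \emph{a fortiori}. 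The main obstacle is the certificate bookkeeping: one must carefully identify the right witness coordinates for $N_{u,v}$ and check that the non-exceptional condition indeed delivers the $\log \Delta$ per-change bound, in direct analogy with the treatment of $P_u$.
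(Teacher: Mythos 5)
Your overall route (apply Theorem~\ref{thm:BruhnJoosTalagrand} with $s=3\Delta$, $c=\log\Delta$, $t=\sqrt{\Delta}\log^2\Delta$, then get the stated threshold $\sqrt{\Delta}\log^5\Delta$ \emph{a fortiori}) is the intended one, and the routine hypotheses ($M\le\Delta$, $\mathbb{P}[\Omega^*]\le M^{-2}$, $t>50c\sqrt{s}$) are checked correctly. The gap is in the certificate verification, and it is not mere bookkeeping: you cannot ``trace the new uncolouring witness back to a change of $\Omega_{a_w}$ or $\Omega_{a_w w}$ exactly as in the $P_u$ proof'', because the direction of change is reversed. In Proposition~\ref{prop:PuCertificate} an increase of $P_u$ is caused by vertices that were uncoloured in $\omega$ becoming coloured in $\omega'$; their uncolouring witnesses exist already in $\omega$, so their coordinates can be placed in $I$, and any $\omega'$ that re-colours such a vertex must differ from $\omega$ on one of these stored coordinates. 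For $N_{u,v}=|N(u)\cap N(v)\cap V(G')|$ an increase is caused by vertices that were \emph{coloured} in $\omega$ becoming \emph{uncoloured} in $\omega'$. For such a $w$ (your set $B'$) no witness was stored --- you only stored witnesses for vertices already uncoloured in $\omega$, which cannot contribute to an increase --- and the cause of the new uncolouring is the \emph{creation} of a conflict on some edge $wx$: either $f_1(x)$ changed or $D(wx)$ flipped, with $x$ ranging over all of $N(w)$. These coordinates need not meet $I$ at all. Concretely, if in $\omega$ every $w\in N(u)\cap N(v)$ is coloured but has a neighbour $x_w$ with $C_{wx_w}(f_1(w))=f_1(x_w)$ and $D(wx_w)=x_w$, then flipping the coordinates $D(wx_w)$ (none of which lie in your $I$) yields a non-exceptional $\omega'$ with $\omega|_I=\omega'|_I$ while $N_{u,v}$ increases by up to $|N(u)\cap N(v)|\gg t$; so the certificate property fails for the index set you describe.

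The repair is to run the $P_u$-style argument on the complementary variable $\overline{N}_{u,v}:=|N(u)\cap N(v)|-N_{u,v}$, the number of common neighbours that \emph{retain} their colour. An increase of $\overline{N}_{u,v}$ does correspond to destroying witnesses present in $\omega$, so the index set consisting of $\Omega_w$ for $w\in N(u)\cap N(v)$ together with $\Omega_{a_w}$ and $\Omega_{wa_w}$ for the $w\in N(u)\cap N(v)\cap V(G')$ works, and by non-exceptionality a single changed coordinate can re-colour at most $\log\Delta$ of the relevant vertices, each contributing exactly $1$; this gives downward $(3\Delta,\log\Delta)$-certificates and is where $c=\log\Delta$ (rather than $\log^2\Delta$) genuinely comes from. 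Since $\overline{N}_{u,v}$ differs from $N_{u,v}$ by the deterministic constant $|N(u)\cap N(v)|$, the two-sided bound of Theorem~\ref{thm:BruhnJoosTalagrand} for $\overline{N}_{u,v}$ is exactly the statement of the lemma; this is how the paper's one-line appeal to ``an argument analogous to Proposition~\ref{prop:PuCertificate}'' must be read. With that substitution, the rest of your verification goes through unchanged.
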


\subsection{Iterating the Procedure}

We now argue that given the properties of the colouring obtained after applying Lemma~\ref{lem:colouring}, the graph induced by the uncoloured vertices retains some of the sparsity of the original graph.

\newcounter{DeltaStableSparsity}
\setcounter{DeltaStableSparsity}{\thei}
\addtocounter{i}{1}
\begin{lemma}\label{lem:stablesparsity}
Let $\delta, \mu > 0$, let $G$ be a graph of maximum degree $\Delta$ such that every neighbourhood induces at most $(1-\delta){\Delta \choose 2}$ edges, and let $G'$ be a $\mu$-quasirandom subgraph of $G$. For every $\delta' < \delta$, there exists $\Delta_\theDeltaStableSparsity(\mu,\delta,\delta')$ such that if $\Delta \geq \Delta_\theDeltaStableSparsity(\mu,\delta,\delta')$, then every neighbourhood of $G'$ induces at most $(1-\delta'){\Delta(G') \choose 2}$ edges.
\end{lemma}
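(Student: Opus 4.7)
The plan is to fix an arbitrary vertex $u \in V(G')$ and bound $|E(G'[N_{G'}(u)])|$ by applying the $\mu$-quasirandomness condition (Definition~\ref{def:mu-quasirandom}) twice, picking up one factor of $\mu$ per endpoint of an edge in $N_G(u)$. Writing $A := V(G')$, the starting identity is
$$2|E(G'[N_{G'}(u)])| = \sum_{v \in N_G(u) \cap A} |N_G(v) \cap N_G(u) \cap A|.$$
First I would apply quasirandomness to each pair $(u,v)$ to replace $|N_G(v) \cap N_G(u) \cap A|$ by $\mu |N_G(v) \cap N_G(u)| + O(\sqrt{\Delta}(\log \Delta)^5)$. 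After summing over $v \in N_G(u) \cap A$ and swapping the order of summation to reindex the inner sum over $w \in N_G(u)$, a second application of quasirandomness to each pair $(u,w)$ gives
$$2|E(G'[N_{G'}(u)])| \le 2\mu^2 |E(G[N_G(u)])| + O(\Delta^{3/2}(\log \Delta)^5).$$
Using the $\delta$-sparsity assumption $|E(G[N_G(u)])| \le (1-\delta){\Delta \choose 2}$ then yields
$$|E(G'[N_{G'}(u)])| \le \mu^2(1-\delta){\Delta \choose 2} + O(\Delta^{3/2}(\log \Delta)^5).$$

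Next I would reduce the comparison to ${\Delta(G') \choose 2}$ by assuming without loss of generality that $G$ is $\Delta$-regular, via the standard doubling construction described at the end of Subsection~\ref{subsec:ncp}; both $\delta$-sparsity and the $\mu$-quasirandomness of $V(G')$ survive this reduction. In the regular case, the $u = v$ instance of Definition~\ref{def:mu-quasirandom} gives $d_{G'}(v) \ge \mu \Delta - \sqrt{\Delta}(\log \Delta)^5$ for every $v \in V(G')$, so $\Delta(G') \ge \mu \Delta - \sqrt{\Delta}(\log \Delta)^5$ and hence
$${\Delta(G') \choose 2} \ge \mu^2 {\Delta \choose 2} - O(\Delta^{3/2}(\log \Delta)^5).$$
Dividing the two estimates, the ratio $|E(G'[N_{G'}(u)])| / {\Delta(G') \choose 2}$ equals $(1-\delta) + o(1)$ as $\Delta \to \infty$ (uniformly in $u$), so for $\Delta$ sufficiently large in terms of $\mu, \delta, \delta'$ it is at most $1 - \delta'$.

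The conceptual point is the clean pairing: restricting to $A$ at each of the two endpoints of an edge in $N_G(u)$ contributes one factor of $\mu$, producing the $\mu^2$ in the numerator, and this is exactly matched by the factor $\mu^2$ appearing in ${\Delta(G') \choose 2} \approx \mu^2 {\Delta \choose 2}$ in the denominator. The sparsity ratio is therefore essentially preserved, allowing us to recover any $\delta' < \delta$. The main technical obstacle is controlling the additive error $O(\Delta^{3/2}(\log \Delta)^5)$ against the main term $\mu^2 \Delta^2$: this forces the threshold $\Delta_{\theDeltaStableSparsity}$ to depend on $\mu$ and to grow rapidly as $\mu \to 0$ or $\delta - \delta' \to 0$.
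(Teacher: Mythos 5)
Your main estimate is exactly the paper's argument: double count $2|E(G'[N_{G'}(u)])|$, apply the quasirandomness condition once per endpoint to get $2\mu^2|E(G[N_G(u)])| + O(\Delta^{3/2}(\log\Delta)^5)$, use $\delta$-sparsity of $G$, and then compare with ${\Delta(G') \choose 2}$ via a lower bound of the form $\Delta(G') \geq \mu\Delta - \sqrt{\Delta}(\log\Delta)^5$. All of that is fine and matches the paper's proof step for step.

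The one step that would fail is the ``without loss of generality $G$ is $\Delta$-regular'' reduction: the doubling construction does \emph{not} preserve $\mu$-quasirandomness of $V(G')$. If $u \in V(G')$ has $d_G(u) = \Delta - m$, then in the regularised graph the $m$ added neighbours of $u$ are all clones of $u$, hence all lie in the blown-up copy of $V(G')$; so the number of neighbours of $u$ in that set becomes roughly $\mu(\Delta-m) + m$, which differs from $\mu\Delta$ by about $(1-\mu)m$ and violates the allowed error $\sqrt{\Delta}(\log\Delta)^5$ as soon as $m$ is larger than that (and $\mu<1$, as in the intended application). Moreover the reduction adds edges inside the blown-up copy of $V(G')$, so it changes $G'$ and can increase its maximum degree; a sparsity bound relative to ${\Delta(G'^{*}) \choose 2}$ for the regularised pair does not transfer back to the desired bound relative to ${\Delta(G') \choose 2}$. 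The good news is that regularity is only used by you to obtain the lower bound on $\Delta(G')$, and this is precisely what the paper extracts directly from the $u=v$ instance of Definition~\ref{def:mu-quasirandom} (asserting $\mu\Delta \leq \Delta(G') + \sqrt{\Delta}(\log\Delta)^5$; in the paper's application, via Lemma~\ref{lem:colouring}, the ambient graph is $\Delta$-regular, so every vertex of $G'$ already has $d_{G'}(v) \geq \mu\Delta - \sqrt{\Delta}(\log\Delta)^5$). Dropping your regularisation step and quoting that bound instead repairs the argument and makes it coincide with the paper's proof.
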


\begin{proof}
Let $u \in V(G')$, and for simplicity let $N'(u) = N(u) \cap V(G')$ and $d'(u) = d_{V(G')}(u)$. If $S$ is a set of vertices, we also write $E(S)$ to mean $E(G[S])$. Counting the edges induced by $N'(u)$, we have $2 |E(N'(u))| =  \sum_{v \in N'(u)} d_{N'(u)}(v)$. For any $v \in N'(u)$, since $G'$ is $\mu$-quasirandom and $d_{N'(u)}(v) = |N(u) \cap N(v) \cap V(G')|$, we have
\begin{equation}\label{eq:stabledegree}
d_{N'(u)}(v) \leq \mu d_{N(u)}(v) + \sqrt{\Delta}(\log \Delta )^5.
\end{equation}
Thus we have
\begin{align*}
2 |E(N'(u))| & \leq \sum_{v \in N'(u)} \left(\mu d_{N(u)}(v) + \sqrt{\Delta}(\log \Delta)^5\right)\\
& \leq \mu\sum_{v \in N'(u)} d_{N(u)}(v) + \Delta\sqrt{\Delta}(\log \Delta)^5.
\end{align*}
Rewriting the sum we have 
\begin{align*}
\sum_{v \in N'(u)} d_{N(u)}(v) &= \sum_{v \in N'(u)} \sum_{w \in N(u) \cap N(v)}1\\
& = \sum_{w \in N(u)} \sum_{v \in N'(u) \cap N(w) }1\\
& =  \sum_{w \in N(u)} d_{N'(u)}(w),
\end{align*}
so another application of~\eqref{eq:stabledegree} gives
\begin{align*}
2|E(N'(u))| &\leq \mu \sum_{w \in N(u)} \left(\mu d_{N(u)}(w)+ \sqrt{\Delta}(\log \Delta)^5\right)+ \Delta\sqrt{\Delta}(\log \Delta)^5\\
&= \mu^2 \sum_{w \in N(u)} d_{N(u)}(w) +  \mu\Delta \sqrt{\Delta}(\log \Delta)^5+ \Delta\sqrt{\Delta}(\log \Delta)^5\\
& \leq 2\mu^2 |E(N(u))| + 2\Delta\sqrt{\Delta}(\log \Delta)^5.
\end{align*}
Since every neighbourhood of $G$ induces at most $(1-\delta){\Delta \choose 2}$ edges, we have $$|E(N'(u))| \leq \mu^2 (1-\delta){\Delta \choose 2} + \Delta\sqrt{\Delta}(\log \Delta)^5,$$ and since $\mu^2 {\Delta \choose 2} \leq {\mu\Delta \choose 2} + \mu \Delta$ for any $\mu > 0$, we have $$|E(N'(u))| \leq (1-\delta){\mu\Delta \choose 2} + 2\Delta\sqrt{\Delta}(\log \Delta)^5.$$ Because $G'$ is a $\mu$-quasirandom subgraph of $G$, we have that $\mu \Delta \leq \Delta(G') + \sqrt{\Delta}(\log \Delta)^5$, so

\begin{align*}
|E(N'(u))| &\leq (1-\delta){ \Delta(G') + \sqrt{\Delta}(\log \Delta)^5 \choose 2} +  2\Delta\sqrt{\Delta}(\log \Delta)^5\\
& \leq (1-\delta) \left[ { \Delta(G') \choose 2} + { \sqrt{\Delta}(\log \Delta)^5\choose 2}+ \Delta(G')\sqrt{\Delta}(\log \Delta)^5 \right] + 2\Delta\sqrt{\Delta}(\log \Delta)^5
\end{align*}
Thus we have $|E(N'(u))|  \leq (1-\delta) { \Delta(G') \choose 2} + O(\Delta\sqrt{\Delta}(\log \Delta)^5)$. Finally, for any $\delta' < \delta$ we have $|E(N'(u))|  \leq (1-\delta') { \Delta(G') \choose 2}$ provided that $\Delta$ is large enough.
\end{proof}

We are now able to prove the main Theorem of this section.

\newcounter{DeltaMainThm}
\setcounter{DeltaMainThm}{\thei}
\addtocounter{i}{1}
\begin{theorem}\label{thm:MainThmCorr}
Let $\varepsilon, \delta >0$ be such that $\varepsilon<0.5$ and $$\varepsilon< e^{\frac{1}{2(1-\varepsilon)}}\left(\frac{ \delta}{2(1-\varepsilon)}e^{-\frac{1}{1-\varepsilon}}-\frac{ \delta^{3/2}}{6(1-\varepsilon)^2}e^{-\frac{7}{8(1-\varepsilon)}}\right).$$ There exists $\Delta_\theDeltaMainThm(\varepsilon,\delta) > 0$ such that if $G$ is a $\delta$-sparse graph of maximum degree $\Delta > \Delta_\theDeltaMainThm(\varepsilon,\delta)$, then $\chi_c(G) \leq (1-\varepsilon)\Delta$.
 \end{theorem}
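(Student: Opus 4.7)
The plan is to apply Lemma~\ref{lem:colouring} iteratively, producing a descending chain of graphs $G_0 \supseteq G_1 \supseteq \cdots$ and corresponding correspondence assignments $C_i$ of $G_i$, until the maximum degree drops below the minimum list size and the remainder can be coloured greedily. After the padding argument from Section~\ref{subsec:ncp} I may assume $G$ is $\Delta$-regular; set $G_0 := G$, $\Delta_0 := \Delta$, $\delta_0 := \delta$, $k_0 := \lceil (1-\varepsilon)\Delta \rceil$, and fix an arbitrary $k_0$-correspondence assignment $C_0$. I would build the chain inductively, with $k_i$-correspondence assignments $C_i$ such that every $C_{i+1}$-colouring of $G_{i+1}$ extends to a $C_i$-colouring of $G_i$, stopping at the first index $T$ where $\Delta(G_T) \leq k_T - 1$; at that point a greedy colouring of $G_T$ lifts through the chain to the desired $C_0$-colouring of $G$.

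At step $i$, I would take $\gamma_i$ to be, say, half of $\tfrac{\Delta_i \delta_i}{2 k_i} e^{-\Delta_i/k_i} - \tfrac{\Delta_i^2 \delta_i^{3/2}}{6 k_i^2} e^{-7 \Delta_i /(8 k_i)}$, which is positive because the ratio $r_i := \Delta_i/k_i$ stays in $[1, 1/(1-\varepsilon)]$ and $\delta_i$ stays bounded away from $0$ (both verified below). Lemma~\ref{lem:colouring} then yields a $\mu_i$-quasirandom subgraph $G_{i+1}$ and a $k_{i+1}$-correspondence assignment with $\Delta_{i+1} \leq \mu_i \Delta_i + \sqrt{\Delta_i}(\log \Delta_i)^5$ and $k_{i+1} \geq k_i - (1 - \mu_i - \gamma_i) \Delta_i$, where $\mu_i = 1 - (1 - 1/(2 k_i))^{\Delta_i}$. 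Lemma~\ref{lem:stablesparsity} then guarantees $G_{i+1}$ is $\delta_{i+1}$-sparse for any prescribed $\delta_{i+1} < \delta_i$ provided $\Delta_i$ is large enough; I would set $\delta_{i+1} := \delta_i - \delta/(2T)$ so that every $\delta_i \geq \delta/2$, where $T$ is the iteration bound determined below.

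Termination is driven by the sequence $r_i$. Plugging the bounds above into $r_{i+1} \leq \mu_i \Delta_i/(k_i - (1-\mu_i-\gamma_i)\Delta_i)$ and neglecting vanishing error terms, a short calculation shows that $r_{i+1} < r_i$ is equivalent to $\gamma_i > (1-1/r_i)(1-\mu_i) = (1-1/r_i) e^{-r_i/2}$, and specialising to $r_0 = 1/(1-\varepsilon)$ and $\delta_0 = \delta$ this is precisely the hypothesis on $\varepsilon$. I expect that $\gamma(r) - (1-1/r)e^{-r/2}$ is in fact positive throughout $[1, r_0]$: it is positive at $r = 1$ because the right-hand side vanishes while $\gamma(1) > 0$ for $\delta > 0$, and at $r_0$ by hypothesis. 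Granting this on the whole interval, a compactness argument over $[1, r_0] \times [\delta/2, \delta]$ produces a uniform $\tau = \tau(\varepsilon,\delta) > 0$ with $r_i - r_{i+1} \geq \tau$ while $r_i \geq 1$, so setting $T := \lceil (r_0-1)/\tau \rceil + 1$ forces $r_T < 1$ and hence $\Delta_T \leq k_T - 1$.

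The principal obstacle is twofold. First, the asymptotic error terms from Lemma~\ref{lem:colouring} and Lemma~\ref{lem:stablesparsity} must be propagated through the chain of $T = T(\varepsilon,\delta)$ iterations without violating the large-$\Delta$ hypothesis of either lemma; since $T$ depends only on $\varepsilon$ and $\delta$ and each $\Delta_i$ shrinks by at most a multiplicative factor bounded away from both $0$ and $1$, choosing $\Delta_{\theDeltaMainThm}(\varepsilon,\delta)$ large enough makes this routine. The more delicate point is verifying that the strict-decrease inequality $\gamma(r) > (1-1/r)e^{-r/2}$ holds throughout the whole interval $[1, r_0]$ rather than merely at the endpoints; this is the genuine content of the hypothesis and the step where the iteration could break down if an interior zero were to appear.
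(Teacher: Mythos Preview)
Your outline is essentially the paper's own argument: iterate Lemma~\ref{lem:colouring}, use Lemma~\ref{lem:stablesparsity} to maintain sparsity, track the ratio $r_i = \Delta_i/k_i$ (equivalently $\varepsilon_i = 1 - 1/r_i$), and show it strictly decreases by a fixed amount each round until it falls below $1$. Your derivation that the decrease is governed by the inequality $\gamma_i > (1-1/r_i)e^{-r_i/2}$, i.e.\ $g(\varepsilon_i,\delta_i) > \varepsilon_i\, e^{-1/(2(1-\varepsilon_i))}$, is correct and matches the paper.

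The gap you yourself flag at the end is real but closes by a one-line monotonicity check, which is exactly what the paper does. With $x = 1-\varepsilon \in [\tfrac12,1]$, both $x \mapsto \tfrac{1}{x}e^{-1/x}$ and $x \mapsto -\tfrac{1}{x^2}e^{-7/(8x)}$ are increasing on $[\tfrac12,1]$ (differentiate and check signs), so $g(\varepsilon,\delta)$ is \emph{decreasing} in $\varepsilon$ on $[0,\tfrac12]$; and $\varepsilon \mapsto \varepsilon\, e^{-1/(2(1-\varepsilon))}$ is \emph{increasing} on $[0,\tfrac12]$ (its derivative is $(1-\tfrac{\varepsilon}{2(1-\varepsilon)^2})e^{-1/(2(1-\varepsilon))}>0$ there). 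Together with the obvious monotonicity of $g$ in $\delta$, the single hypothesis at $(\varepsilon,\delta)$ therefore forces the strict inequality at every pair $(\varepsilon^*,\delta^*)$ with $\varepsilon^* \le \varepsilon$ and $\delta^* \ge \delta'$. No interior zero can appear, and you get a uniform slack $\beta>0$ depending only on $(\varepsilon,\delta)$, hence a bound $T = O(\varepsilon/\beta)$ on the number of rounds.

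One small correction: choosing $\gamma_i$ to be \emph{half} of the upper bound from Lemma~\ref{lem:colouring} is unsafe, since the hypothesis only says that upper bound exceeds $(1-1/r_i)e^{-r_i/2}$, not twice it. The paper instead takes $\gamma_i$ just above the lower threshold, $\gamma_i = \varepsilon_i\, e^{-1/(2(1-\varepsilon_i))} + \beta$; the monotonicity above guarantees this still lies strictly below the upper bound at every step.
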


\begin{proof}
For convenience we define 
\begin{equation}\label{eq:gFunc}
g(\varepsilon, \delta) = \frac{ \delta}{2(1-\varepsilon)}e^{-\frac{1}{1-\varepsilon}}-\frac{ \delta^{3/2}}{6(1-\varepsilon)^2}e^{-\frac{7}{8(1-\varepsilon)}}.
\end{equation}

Set $k = \lfloor(1-\varepsilon)\Delta\rfloor$ and let $C$ be a $k$-correspondence assignment for $G$. We will show that $G$ is $C$-colourable by repeatedly applying Lemma~\ref{lem:colouring} to the remaining uncoloured graph. We frequently assume that the maximum degree of this graph is sufficiently large, and explain at the end of this proof why we may do this.

To simplify the analysis, let $\varepsilon'>0$ be such that $(1-\varepsilon')\Delta = k$. If $\Delta_\theDeltaMainThm$ is large enough, then this can always be done in such a way that $\varepsilon'$ and $\delta$ still satisfy the conditions of the theorem provided $\Delta > \Delta_\theDeltaMainThm$. We also choose some $\delta' < \delta$ such that $\delta'$ and $\varepsilon'$ still satisfy the condition. When iterating the procedure, the sparsity of the uncoloured subgraph may decrease, but by taking $\Delta_\theDeltaMainThm$ to be large enough, we will ensure that it never drops below $\delta'$. In this way, the condition of the theorem is always satisfied and we can apply the naive colouring procedure again.

Let $\beta > 0$ be a constant such that $\varepsilon' e^{-\frac{1}{2(1-\varepsilon')}} + \beta < g(\varepsilon', \delta')$. Informally, we show that in the subgraph induced by uncoloured vertices, the ratio of number of colours available over maximum degree increases by at least $\beta/2$ after every iteration of the naive colouring procedure. Thus, this ratio eventually reaches $1$, which guarantees we can colour the final uncoloured subgraph greedily. Additionally, note that the upper-bound on the number of iterations does not depend on $\Delta$.

Let $T= \lceil\frac{2\varepsilon}{\beta}\rceil + 1$. First we define parameters for the small constants we use as follows. Define for each integer $i$ such that $0\le i \le T$ the following:
\begin{itemize}
    \item $\varepsilon_i = \varepsilon' - i \frac{\beta}{2}$
    \item $\gamma_i = \varepsilon_{i} e^{-\frac{1}{2(1-\varepsilon_{i})}} + \beta$
    \item $\delta_i = \delta - \frac{i}{T} (\delta - \delta')$
\end{itemize}

Let $r_0 = \Delta$. We now define the constants we use for the numbers of colours, degrees and quasirandomness as follows. Define for each integer $i$ such that $0\le i \le T$ the following:

\begin{itemize}
    \item $k_i = (1 - \varepsilon_i)r_i$
    \item $\mu_{i} = 1- (1-\frac{1}{2k_i})^{r_i}$
    \item $r_i = \left(\mu_{i-1}+\frac{\beta}{2}\right)r_{i-1}$ for $i\ne 0$
\end{itemize}

First we argue that $r_T$ will be large enough provided that $\Delta_\theDeltaMainThm(\varepsilon, \delta)$ is, as follows.

\begin{claim}\label{cl:DeltaLargeEnough}
For every $C_T$, there exists $C$ such that if $\Delta_\theDeltaMainThm(\varepsilon, \delta) > C$, then $r_T > C_T$.
\end{claim}
\begin{proof}
Note that given $\varepsilon$ and $\delta$, we have that $\mu_i \geq 1 - e^{\frac{1}{2(1-\varepsilon_i)}} > 0$ for every $i \in \{1, \dots, T\}$. Since $r_{i+1} \geq \mu_i r_i - \sqrt{r_i}(\log r_i)^5$, we have that $r_{i+1}$ grows with $r_i$ for every $i \in \{1, \dots, T\}$.
\end{proof}

We then argue two useful monotone properties.

\begin{claim}\label{cl:deltaeps}
If $\delta^* \geq \delta'$ and $\varepsilon^* \leq \varepsilon'$, then $g(\varepsilon^*,\delta^*)\geq g(\varepsilon',\delta')$.
\end{claim}

\begin{proofclaim}
It is easily checked that for fixed $\varepsilon$, the function $g$ is increasing in $\delta$. Therefore, it remains to argue that $g(\varepsilon^*,\delta')\geq g(\varepsilon',\delta')$, in other words, that $g$ is a decreasing function of $\varepsilon$ for fixed $\delta$.
We point out that $x \mapsto \frac{\delta'}{2x}e^{-\frac{1}{x}}$ is an increasing function of $x$ for $x \in [0.5,1]$, as well as $x \mapsto -\frac{\delta'^{\frac32}}{6x}e^{-\frac{7}{8x}}$ for $x \in [0.5,1]$. It follows that the sum is also an increasing function of $x\in [0.5,1]$. Setting $x=1-\varepsilon$, we obtain that $g$ is a decreasing function of $\varepsilon$ for $\varepsilon \in ]0,0.5]$ and for fixed $\delta$.
\end{proofclaim}

We can similarly argue the following.

\begin{claim}\label{cl:decreasinggamma}
For every $i \in \{1,\ldots,T-1\}$, we have $\gamma_{i+1}\leq \gamma_i$.
\end{claim}

\begin{proofclaim}
Since $\gamma_i = \varepsilon_{i} e^{-\frac{1}{2(1-\varepsilon_{i})}} + \beta$ and $\varepsilon_{i}$ is a decreasing function of $i$, it suffices to argue that the function $x \mapsto x \cdot e^{-\frac{1}{2(1-x)}}$ is increasing on $[0,0.5]$. This is easy to check by computing its derivative ($x \mapsto (1-\frac{x}{2(1-x^2)}) \cdot e^{-\frac{1}{2(1-x)}}$) and noticing that it is positive on $[0,0.5[$.
\end{proofclaim}

Now we argue that there inductively exists by Lemma~\ref{lem:colouring} subgraphs of $G$ and new correspondence assignments for those subgraphs whose parameters are defined as above.

\begin{claim}\label{cl:iterate}
There exist a family of graphs $(G_i: i\in [0,T])$ with $G_0=G$ and correspondence assignments $C_i$ for $G_i$ with $C_0=C$ such that all of the following hold for each $i\in [1,T]$:
\begin{enumerate}[1)]
    \item $G_i$ is $\delta_i$-sparse \label{item:delta_i}
    \item $G_{i}$ has maximum degree at most $r_i$ \label{item:Delta_i}
    \item $C_i$ is a $k_i$-correspondence assignment \label{item:colours}
    \item If there exists a $C_{i}$-colouring of $G_{i}$, then there exists a $C$-colouring of $G$\label{item:extend}
\end{enumerate}
\end{claim}
\begin{proof}
We proceed by induction on $i$. Hence $G_{i-1}$ is $\delta_{i-1}$-sparse graph with maximum degree $r_{i-1}$ and $C_{i-1}$ is $k_{i-1}$-correspondence assignment for $G_{i-1}$ such that~\ref{item:extend}) holds.

By choice of $\beta$, we have $\varepsilon'\cdot e^{-\frac{1}{2(1-\varepsilon')}}+\beta< g(\varepsilon',\delta')$. Since the left term is exactly $\gamma_0$, we rewrite the previous equation: $\gamma_0< g(\varepsilon',\delta')$.

Note that for every $i$, we have $\varepsilon_i \leq \varepsilon'$ and $\delta_i \geq \delta'$. By combining Claims~\ref{cl:deltaeps} and~\ref{cl:decreasinggamma}, we obtain,
for each $i \geq 1$, 
$$\gamma_i = \varepsilon_i e^{-\frac{1}{2(1-\varepsilon_i)}} + \beta < g(\varepsilon_i, \delta_i).$$ 

By the remark following Proposition~\ref{prop:partial}, there exists a $r_{i-1}$-regular graph $G'_{i-1}$ of sparsity $\delta_{i-1}$ that contains $G_{i-1}$ as a subgraph. We extend $C_{i-1}$ to $k_{i-1}$-correspondence assignment of $G$ arbitrarily.

Applying Lemma~\ref{lem:colouring} with $\gamma=\gamma_{i-1},G=G'_{i-1}, \Delta = r_{i-1}, \delta=\delta_{i-1}, k=k_{i-1}$ and $C=C_{i-1}$, we find that there exists a $\mu_{i-1}$-quasirandom subgraph of $G'_{i-1}$, call it $G_i$, such that there exists an $\ell$-correspondence assignment $C_{i}$ of $G_i$ such that any $C_i$-coloring of $G_i$ extends to $C_{i-1}$-coloring of $G'_{i-1}$. Hence there exists a $C_{i-1}$-coloring of $G_{i-1}$ and $\ell \ge k_{i-1} - ( 1-\mu_{i-1}-\gamma_{i-1})r_{i-1}$. 

Since $r_i$ is large enough by Claim~\ref{cl:DeltaLargeEnough}, Lemma~\ref{lem:stablesparsity} implies that $G_i$ is $\delta_i$-sparse and hence~\ref{item:delta_i}) holds for $G_i$. Since~\ref{item:extend}) holds for $G_{i-1}$, we find that~\ref{item:extend}) holds for $G_i$.

\begin{subclaim}
\ref{item:Delta_i}) holds for $G_i$.
\end{subclaim}
\begin{proof}
 Since $G_{i}$ is a $\mu_{i-1}$-quasirandom subgraph of $G_{i-1}$, it follows that 
$$r_i \leq \mu_{i-1}r_{i-1} +  \sqrt{r_{i-1}}(\log r_{i-1})^5.$$

\noindent Since $r_i$ is large enough, we have that $\sqrt{r_{i-1}}(\log r_{i-1})^5\le \frac{\beta}{2}r_{i-1}$ and hence

$$r_i \leq \left(\mu_{i-1}+\frac{\beta}{2}\right)r_{i-1},$$
as desired.
\end{proof}

\begin{subclaim}
\ref{item:colours}) holds for $G_i$.
\end{subclaim}
\begin{proofclaim}
It suffices to show that $\ell \ge k_i$. Recall that $\ell \ge k_{i-1} - (1-\mu_{i-1}-\gamma_{i-1})r_{i-1}$. 

Since $k_{i-1} = (1- \varepsilon_{i-1})r_{i-1}$ and $\gamma_{i-1} = \varepsilon_{i-1} e^{-\frac{1}{2(1-\varepsilon_{i-1})}} + \beta$, we have that
\begin{align*}
k_{i} &\geq (1-\varepsilon_{i-1})r_{i-1}-(1-\mu_{i-1})r_{i-1} + (\varepsilon_{i-1} e^{-\frac{1}{2(1-\varepsilon_{i-1})}} + \beta)r_{i-1}\\
& = \left(\mu_{i-1} - (1-e^{-\frac{1}{2(1-\varepsilon_{i-1})}})\varepsilon_{i-1} + \beta  \right)r_{i-1}.
\end{align*}
Since $\mu_{i-1} \geq 1 - e^{-\frac{1}{2(1-\varepsilon_{i-1})}}$, we find that
\begin{align*}
k_{i} &\geq ((1 - \varepsilon_{i-1})\mu_{i-1} + \beta) r_{i-1}\\
&= (1 - \varepsilon_{i-1})\mu_{i-1}r_{i-1} + \beta r_{i-1}.
\end{align*}

Since $r_i = \left(\mu_{i-1}+\frac{\beta}{2}\right)r_{i-1}$, we have that
\begin{align*}
k_{i} &= (1 - \varepsilon_{i-1})(r_i - \frac{\beta}{2}r_{i-1}) + \beta r_{i-1}\\
&= (1 - \varepsilon_{i-1})r_i + \frac{\beta}{2}r_{i-1}\\
&\geq \left(1 - \varepsilon_{i-1} + \frac{\beta}{2}\right)r_{i}\\
&= (1-\varepsilon_i)r_i,
\end{align*}
as desired.
\end{proofclaim}

\end{proof}

Since $T= \lceil\frac{2\varepsilon}{\beta}\rceil + 1$ and $\varepsilon_T = \varepsilon - T \frac{\beta}{2}$, we find that $\varepsilon_T < 0$. Hence $k_T = (1-\varepsilon_T)r_T > r_T + 1$ provided $r_T$ is large enough. Thus there exists a $C_T$-colouring of $G_T$ using a greedy algorithm. By Claim~\ref{cl:iterate}(\ref{item:extend}), it follows that $G$ is $C$-colourable.
\end{proof}

Bruhn and Joos~\cite{BruhnJoos} note that for $\delta \in [0, 0.9]$, setting $\varepsilon = 0.1827\delta - 0.0778\delta^{3/2}$ satisfies $\varepsilon < g(\varepsilon, \delta)$, where $g$ is the function defined in~\eqref{eq:gFunc}. Since $\sqrt{e} < e^{\frac{1}{2(1-\varepsilon)}}$ for all $\varepsilon>0$, we have that setting $\varepsilon = (0.1827\delta - 0.0778\delta^{3/2})\sqrt{e}$ satisfies $\varepsilon < e^{\frac{1}{2(1-\varepsilon)}}g(\varepsilon, \delta)$ for $\delta$ in the same range. Therefore we deduce Theorem~\ref{thm:SparsityApprox} as a corollary.

\section{Application to Strong Edge Colouring}\label{sect:strongedge}
 
In this section we prove Theorem~\ref{thm:MainThmStrongEdge}. Recall that $L(H)$ denotes the \emph{line graph} of $H$, that is, the graph with vertex set $E(H)$ and where two edges are adjacent if they were incident in $H$. The \emph{square} of a graph $G$ is obtained from $G$ by adding an edge between every pair of vertices $u, v \in V(G)$ which have distance precisely $2$ in $G$. In other words, two vertices are adjacent in the square of $G$ if and only if they are at distance $1$ or $2$ in $G$. If $H$ is a graph, we denote the square of the line graph of $H$ by $L^2(H)$. Thus, a strong edge colouring of $H$ is a vertex colouring of $L^2(H)$. If $uv \in E(H)$, then $N^s (uv)$ denotes the \emph{strong neighborhood} of $uv$, \textit{i.e.} the set of edges of $H$ which have an endpoint adjacent to $u$ or $v$. Equivalently, $N^s(uv)$ is the neighbourhood of the vertex $uv$ in the graph $L^2(H)$. We also let $d^s(uv) = |N^s (uv)|$. Given a set of vertices $A$ and a vertex $u$, we define $d_{\overbar{A}}(u)$ as $d(u) - d_A(u)$. Similarly, given a set of edges $B$ and an edge $uv$, we define $d^s_{\overbar{B}}(uv)$ as $|N^s (uv)\setminus B|$.

\subsection{A Sparsity Bound for Squares of Linegraphs}
Molloy and Reed~\cite{MolloyReedSCI} and Bruhn and Joos~\cite{BruhnJoos} gave evidence for Conjecture~\ref{conj:1.25} by improving the constant from the trivial bound of $2\Delta^2$. To do this they showed that for any graph $H$, the graph $L^2(H)$ is a subgraph of a graph $G$ such that $\Delta(G)=2\Delta(H)^2$ and $G$ is $\delta$-sparse for some $\delta >0$. This was achieved directly by carefully bounding the number of edges induced by the strong neighbourhood of an edge of $H$. Bruhn and Joos obtained the following inequalities and bounds which we shall make use of later. 
  
  \begin{lemma}\emph{\cite{BruhnJoos}}\label{lem:BruhnJoosBounds}
 Let $H$ be a graph of maximum degree $\Delta$, and $G = L^2(H)$. Let $uv \in E(H)$ and define $X = N_H(u)\cup N_H(v) \setminus \{u,v\}$ and $Y = N_H(X) \setminus (X \cup \{u,v\})$. Letting $C_4(X,Y)$ denote the number of $4$-cycles $x_1y_1x_2y_2$ such that $x_1,x_2 \in X$ and $y_1, y_2 \in Y$, we have $$\sd(uv) \leq (2-\alpha - \beta) \Delta^2 - 2\Delta,$$
$$C_4(X,Y) \geq \frac12 \left( \frac{(2-\alpha - 2\beta  - \gamma)^2}{2(2-\alpha)^2}\Delta^4 - (7 - \frac{\gamma}2)\Delta^3 \right),$$ and $$|E(G[\sn(uv)])| \leq \left( 2- \alpha - \beta - \frac{\gamma}2 \right) \Delta^4 - 2 C_4(X,Y) + \left( \frac{\gamma}2 - 2 \right) \Delta^3,$$ where $\alpha \Delta = |N(u) \cap N(v)|$, $\beta \Delta^2 = |E(H[X])|$ and $\gamma \Delta^3 = \sum_{y \in Y}d_X(y)(\Delta - d_X(y))$.
 \end{lemma}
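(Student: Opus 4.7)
My plan is to prove the three inequalities in order, using direct counting for the first, convexity for the second, and a careful analysis of adjacencies in $G[N^s(uv)]$ exploiting four-cycles for the third. Throughout, I would reduce to the case where $H$ is $\Delta$-regular by the padding observation in Section~\ref{subsec:ncp}.

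\emph{First inequality.} Every edge of $N^s(uv) \cup \{uv\}$ has at least one endpoint in $X \cup \{u,v\}$, since any edge at distance at most two from $uv$ in $L(H)$ shares a vertex with some edge incident to $u$ or $v$. I would bound the number of such edges by
\[
\sum_{w \in X \cup \{u,v\}} d(w) \;-\; |E(H[X \cup \{u,v\}])|,
\]
then use $|X| \le (2-\alpha)\Delta - 2$ (from $|N(u) \cap N(v)| = \alpha\Delta$ and $u \in N(v)$), $\sum_{x \in X} d(x) \le |X|\Delta$, and $|E(H[X \cup \{u,v\}])| = \beta\Delta^2 + d(u) + d(v) - 1$. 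Subtracting one for $uv$ itself gives the claimed bound.

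\emph{Second inequality.} I would rewrite $C_4(X,Y) = \sum_{\{x_1,x_2\} \subseteq X} \binom{c_{12}}{2}$ with $c_{12} = |N(x_1) \cap N(x_2) \cap Y|$, and set $P = \sum_{y \in Y} \binom{d_X(y)}{2} = \sum_{\{x_1,x_2\}} c_{12}$ (the number of length-two paths $x_1 y x_2$ with $y \in Y$). Convexity of $\binom{\cdot}{2}$ applied to the $c_{12}$ yields $C_4(X,Y) \ge P^2/(2\binom{|X|}{2}) - P/2$. From the definition of $\gamma$ one obtains the identity $2P = (\Delta-1)\sum_y d_X(y) - \gamma\Delta^3$, and $\sum_y d_X(y) = |E(X,Y)| = \sum_{x \in X}(d(x) - d_X(x) - d_{\{u,v\}}(x)) \ge |X|\Delta - 2\beta\Delta^2 - O(\Delta)$ in the $\Delta$-regular setting. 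Combining these with $|X| \le (2-\alpha)\Delta$ in the denominator and keeping track of lower-order terms recovers the stated bound.

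\emph{Third inequality and main obstacle.} I would bound $2|E(G[N^s(uv)])| = \sum_{e \in N^s(uv)} d^s_{G[N^s(uv)]}(e)$ by applying an analogue of the first inequality to each $e$ and summing. The leading term $(2-\alpha-\beta-\gamma/2)\Delta^4$ emerges once one accounts for the contribution of the $Y$-vertices weighted by $d_X(y)(\Delta - d_X(y))$, which is exactly $\gamma\Delta^3$. The $-2 C_4(X,Y)$ correction comes from four-cycles $x_1 y_1 x_2 y_2$ with $x_i \in X$, $y_i \in Y$: each such cycle produces pairs of edges in $N^s(uv)$ whose adjacency in $G$ is witnessed via two distinct short paths, leading to an overcount that must be cancelled. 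The hardest part is performing this cancellation cleanly: one must classify edges in $N^s(uv)$ by structural type (inside $X \cup \{u,v\}$ versus crossing from $X$ to $Y$) and track, via inclusion-exclusion, which pairs are adjacent in $G$ via shared vertices versus via connecting edges, with four-cycles being precisely the configurations where both mechanisms apply. This combinatorial bookkeeping is where I expect the main difficulty to lie and where extracting the exact coefficient $\gamma/2$ and the sharp $-2C_4(X,Y)$ correction becomes delicate.
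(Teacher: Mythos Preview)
The paper does not prove this lemma --- it is quoted from Bruhn and Joos --- but the mechanism behind the third inequality is reproduced (and sharpened) in the proof of Lemma~\ref{lem:improvedBound}, via Claims~\ref{clm:PB4C4} and~\ref{clm:B1}. Your treatment of the first two inequalities is correct and matches the standard arguments.

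For the third inequality your instinct is right (opposite edges of a cycle in $C_4(X,Y)$ are joined by two distinct length-three paths, causing an overcount), but the framework you propose --- classifying edges of $N^s(uv)$ by type and running an inclusion--exclusion over ``shared vertex'' versus ``connecting edge'' adjacencies --- is heavier than necessary and is exactly where you anticipate trouble. The clean route, as in Claim~\ref{clm:PB4C4}, is simply to count \emph{directed $3$-paths} $pqrs$ in $H$ with $pq\in Z:=N^s(uv)$. There are at most $2\Delta^2|Z|$ of them; every ordered adjacent pair in $G[Z]$ contributes at least one such path (since $rs\in Z$ forces $r\in X$ or $s\in X$), and contributes at least two when $pq,rs$ are opposite edges of a $4$-cycle in $C_4(X,Y)$, giving the $-4C_4(X,Y)$ correction directly with no case split. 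The $\gamma/2$ term then falls out by subtracting the paths with $r,s\notin X$: those of the shape $(p,x,y,q)$ with $x\in X$, $y\in Y$, $q\notin X$ already number at least $(\Delta-1)\sum_{y\in Y}d_X(y)(\Delta-d_X(y))=(\Delta-1)\gamma\Delta^3$ (this is Claim~\ref{clm:B1}). Dividing the resulting bound $2|E(G[Z])|\le 2\Delta^2|Z|-(\Delta-1)\gamma\Delta^3-4C_4(X,Y)$ by two and inserting the first inequality for $|Z|$ gives exactly the stated estimate, with no further bookkeeping.
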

 We first slightly improve the bound on the number of edges induced by the strong neighbourhood of an edge.
 
 \begin{lemma}\label{lem:improvedBound}
 Let $H$ be a graph of maximum degree $\Delta$, and $G = L^2(H)$. Let $X = N_H(u)\cup N_H(v) \setminus \{u,v\}$ and $Y = N_H(X) \setminus (X \cup \{u,v\})$, and let $C_4(X,Y)$ denote the number of $4$-cycles $x_1y_1x_2y_2$ such that $x_1,x_2 \in X$ and $y_1, y_2 \in Y$. If $uv \in E(H)$, then $$|E(G[\sn(uv)])| \leq \left( 2- \alpha - \beta - \frac{\gamma}2 \right) \Delta^4 - 2 C_4(X,Y)  - \frac{\gamma^2}{2(2-\alpha-\beta)}\Delta^4 + \left( \frac{\gamma}2 - 2 \right) \Delta^3,$$ where $\alpha \Delta = |N(u) \cap N(v)|$, $\beta \Delta^2 = |E(H[X])|$ and $\gamma \Delta^3 = \sum_{y \in Y}d_X(y)(\Delta - d_X(y))$.
 \end{lemma}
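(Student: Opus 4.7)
I plan to build directly on the derivation of Bruhn and Joos. Their proof of Lemma~\ref{lem:BruhnJoosBounds} bounds $|E(G[\sn(uv)])|$ by partitioning the edges of $\sn(uv)$ into those contained in $H[X \cup \{u,v\}]$ and the $X$-$Y$ edges, then counting, for each edge, how many other edges of $\sn(uv)$ lie at distance at most $2$ in $H$; an overcount correction of $-2C_4(X,Y)$ handles pairs of $X$-$Y$ edges connected through two four-cycle paths. The inputs $\alpha$, $\beta$ and $\gamma$ control, respectively, the cardinalities of $N(u)\cap N(v)$ and $E(H[X])$, and a weighted count of non-$X$ degrees at vertices of $Y$. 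The goal is to retrace this bookkeeping and interject one extra Cauchy-Schwarz step.

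The key new ingredient is as follows. Setting $S := \sum_{y \in Y} d_X(y)$ (which is exactly the number of $X$-$Y$ edges in $H$), the first inequality of Lemma~\ref{lem:BruhnJoosBounds} gives $S \le (2-\alpha-\beta)\Delta^2 + O(\Delta)$. Applying the Cauchy-Schwarz inequality to the vectors $(\sqrt{d_X(y)})_{y\in Y}$ and $(\sqrt{d_X(y)}(\Delta - d_X(y)))_{y\in Y}$ yields
\[
(\gamma\Delta^3)^2 = \Bigl(\sum_{y\in Y}d_X(y)(\Delta-d_X(y))\Bigr)^2 \le \Bigl(\sum_{y\in Y}d_X(y)\Bigr)\Bigl(\sum_{y\in Y}d_X(y)(\Delta-d_X(y))^2\Bigr),
\]
which rearranges, using the bound on $S$, to
\[
\sum_{y\in Y}d_X(y)(\Delta-d_X(y))^2 \ge \frac{(\gamma\Delta^3)^2}{S} \ge \frac{\gamma^2\Delta^4}{2-\alpha-\beta}.
\]
The left-hand side is an upper bound on the number of ordered triples consisting of an $X$-$Y$ edge $yx$ together with two distinct neighbours of $y$ lying outside $X$. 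Each such triple witnesses a structural over-count in the Bruhn-Joos bookkeeping for $|E(G[\sn(uv)])|$, because the edges $yx$ and the two $y$-incident edges leaving $X$ contribute to a path-of-length-2 configuration in $L(H)$ that is counted by their upper bound but cannot be realised within $\sn(uv)$. Passing from ordered to unordered (dividing by $2$) gives the additional saving of $\frac{\gamma^2}{2(2-\alpha-\beta)}\Delta^4$, exactly as claimed.

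The main technical hurdle is pinning down the precise point in the Bruhn-Joos counting at which the Cauchy-Schwarz refinement can be inserted without disturbing the $-2C_4(X,Y)$ correction, since the $4$-cycle bookkeeping already accounts for certain pairs of $y$-paths. My expectation is that once the Bruhn-Joos count is rewritten as a sum over $y\in Y$ of local contributions, the Cauchy-Schwarz bound applies to exactly one of these local sums, and the remaining lower-order error terms (from the slack in $S\le (2-\alpha-\beta)\Delta^2 + O(\Delta)$ and from the crude substitution $d(y)-d_X(y)\le \Delta - d_X(y)$) are $O(\Delta^3)$ and can be absorbed into the pre-existing $(\gamma/2-2)\Delta^3$ correction.
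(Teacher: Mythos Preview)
Your Cauchy--Schwarz step is exactly the one the paper uses, and the quantity $\sum_{y\in Y} d_X(y)(\Delta-d_X(y))^2$ is indeed the right intermediate object. However, your combinatorial interpretation of what is being over-counted is not correct, and this is where the real work lies.

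The over-count one may legitimately subtract consists of directed \emph{paths} $xywz$ in $H$ with $x\in X$, $y\in Y$, and $w,z\notin X$: here the first edge $xy$ lies in $\sn(uv)$ but the last edge $wz$ does not, so the pair $(xy,wz)$ is counted in the crude upper bound for $2|E(G[\sn(uv)])|$ but is not an actual edge of $G[\sn(uv)]$. In such a path $z$ is a neighbour of $w$, not of $y$; your ``triples consisting of an $X$--$Y$ edge together with two neighbours of $y$ outside $X$'' are a different object, and even their direction is stated backwards (you call the sum an upper bound on the triples, but you need it to be a \emph{lower} bound on the over-count in order to subtract the Cauchy--Schwarz estimate). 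The honest path count is
\[
B_2=\sum_{y\in Y} d_X(y)\sum_{w\in N(y)\setminus X} d_{\overbar X}(w),
\]
which is not obviously at least $\sum_{y\in Y} d_X(y)\,d_{\overbar X}(y)^2$. The paper closes this gap by rewriting the double sum as a sum over edges of $H[Y]$ and applying $2ab\le a^2+b^2$ to the factors $d_X(y)d_X(w)$; only then does Cauchy--Schwarz apply.

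On your stated technical worry: the paper avoids any interference with the $-2C_4(X,Y)$ correction by rebuilding the whole count as $2|E(G[\sn(uv)])|\le P-B-4C_4(X,Y)$, where $B$ collects paths $pqrs$ with $pq\in\sn(uv)$ and $r,s\notin X$. The $B_2$-paths above and the $C_4$-relevant paths are disjoint by construction (the second vertex lies in $Y$ for the former and in $X$ for the latter), so the two savings add. Inserting your argument \emph{into} the existing Bruhn--Joos derivation, as you propose, would require you to verify this disjointness by hand; the cleaner route is to redo the count from scratch.
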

 
 \begin{proof}
 By the remark following Proposition~\ref{prop:partial}, we may assume that $H$ is $\Delta$-regular. 
 Let $Z = \sn(uv)$. 
 
 We denote by $P$ the number of all (directed) paths $pqrs$ such that $pq \in Z$. We denote by $B$ the number of all paths $pqrs$ such that $pq \in Z$ and $r, s \not\in X$.

 \begin{claim}\label{clm:PB4C4}
 We have $2|E(G[\sn(uv)])| \leq P - B - 4 |C_4(X,Y)|$.
 \end{claim}
 
 \begin{proofclaim}
  We note that $2|E(G[\sn(uv)])|$ is at most the number of paths $pqrs$ where $pqrs$ is a path with $pq\in Z$ and $rs \in Z$. Since every edge in $Z$ has an endpoint in $X$, this is at most $P-B$.
  
  In fact, if $pqrs$ is a cycle, then we count both paths $pqrs$ and $qpsr$ for the edge $(pq,rs)$. If $pqrs$ is a cycle in $C_4(X,Y)$, then this double couting is repeated for each directed pair of opposite edges on the cycle. We derive $2|E(G[\sn(uv)])| \leq P - B - 4 |C_4(X,Y)|$, as desired.
 \end{proofclaim}
 
 We note that $P \leq 2 \Delta^2 \cdot |Z|$. From Lemma~\ref{lem:BruhnJoosBounds}, we know that $|Z|\leq (2-\alpha - \beta) \Delta^2 - 2\Delta$, hence $P \leq 2 \Delta^2 \cdot ((2-\alpha - \beta) \Delta^2 - 2\Delta)$. We focus on lower-bounding $B$. 
 
 We will lower-bound $B$ by considering two distinct types of such paths, as follows.
We denote by $B_1$ the number of all paths $(p,x,y,q)$ such that $x \in X$, $y\in Y$ and $q \not\in X$. We denote by $B_2$ the number of all paths $(x,y,w,z)$ such that $x \in X$, $y \in Y$ and $w, z \not\in X$. See Figure~\ref{fig:EXY} for an illustration of both types.
 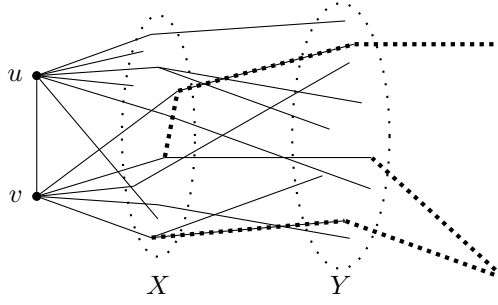
\begin{figure}[h]
\centering
\begin{tikzpicture}[scale=1.6]
\tikzstyle{whitenode}=[draw=white,circle,fill=white,minimum size=0pt,inner sep=0pt]
\tikzstyle{blacknode}=[draw,circle,fill=black,minimum size=3pt,inner sep=0pt]
\draw (0,0) node[blacknode] (u) [label=left:$u$] {};
\draw (u)
--++ (-90:1cm)  node[blacknode] (v) [label=left:$v$] {};
\draw[thick,loosely dotted] (1,-0.5) ellipse[x radius=0.3cm, y radius=1cm];
\draw (1,-1.5) node (e1) [label=-90:$X$] {};
\draw[thick,loosely dotted] (2.5,-0.5) ellipse [x radius=0.4cm, y radius=1.1cm];
\draw (2.5,-1.5) node (e2) [label=-90:$Y$] {};
\foreach \i/\j in {20/1,13/0.9,4/1,-6/0.8,-17/1.1,-50/1.55} {\draw (u) --++ (\i:\j cm) node[whitenode] (a\i) {};}
\foreach \i/\j in {20/1,4/1,-6/0.8,-17/1.1,-37/1.45} {\draw (v) --++ (-\i:\j cm) node[whitenode] (b\i) {};}

\foreach \i/\j/\k/\l in {20/1/20/1.5,20/1/5/1.6,4/1/-10/1.6,-6/0.8/30/2.05,-17/1.1/0/1.7,-37/1.45/15/1.5} {\draw (v) ++ (-\i:\j cm) --++ (\k:\l cm) node[whitenode] (c\i) {};}
\foreach \i/\j/\k/\l in {20/1/4/1.6,4/1/-10/1.7,4/1/-20/1.5,-17/1.1/-20/1.8} {\draw (u) ++ (\i:\j cm) --++ (\k:\l cm) node[whitenode] (d\i) {};}

\draw[ultra thick,dotted] (b-17) -- (b-37) -- (c-37) --++ (0:1.2cm) node (x) {};

\draw[ultra thick,dotted] (b20) -- (c20) --++ (-20:1.4cm) node (x) {} -- (c-17);

\end{tikzpicture}\caption{An example of a path $pxyq$ (dotted, top) and of a path $xywz$ (dotted, bottom).}\label{fig:EXY}
\end{figure}

Note that both types are indeed taken into account in $B$, and that no path can be of both types: the second vertex belongs to $X$ in the case of $B_1$, to $Y$ in the case of $B_2$. Therefore, we have $B \geq B_1+B_2$.

 \begin{claim}\label{clm:B1}
 We have $B_1 \geq  \gamma \Delta^4 - \gamma \Delta^3$.
 \end{claim}
 
 \begin{proofclaim}
 We prove this claim following Bruhn and Joos~\cite[Lemma 2.1]{BruhnJoos}. Since $H$ is $\Delta$-regular, for every fixed path $xyq$ with $x \in X$, $y \in Y$ and $q \not\in X$, there are $\Delta-1$ choices of $p$ to extend it. The number of such $xyq$ is $$\sum_{y\in Y}d_X(y)(\Delta-d_X(y))=\gamma \Delta^3.$$
 
 It follows that $B_1 \geq \gamma \Delta^3 \cdot (\Delta-1)$, hence the conclusion.

 \end{proofclaim}
 
  \begin{claim}\label{clm:B2}
 We have $B_2 \geq \frac{\gamma^2}{(2-\alpha-\beta)} \Delta^4$.
 \end{claim}
 
 \begin{proofclaim}
  Each vertex $y \in Y$ can be extended to a path in $R$ by choosing a neighbour of $y$ in $X$, and a path of length two starting at $y$ and avoiding $X$. Thus, since $H$ is
$\Delta$-regular, we have
 \begin{align*}
|B_2| &= \sum_{y \in Y} \left( d_X(y) \sum_{w \in N(y) \setminus X}d_{\overbar{X}}(w) \right)\\
& = \sum_{y \in Y} \left( d_X(y)\sum_{w \in N(y) \setminus X}(\Delta - d_{X}(w)) \right). \numberthis\label{eq:ExtraPaths1}
 \end{align*}
 Expanding the sum, equation~\eqref{eq:ExtraPaths1} becomes
 \begin{equation}\label{eq:ExtraPaths2}
|B_2| = \sum_{y \in Y} d_X(y)\Delta d_{\overbar{X}}(y) - \sum_{y \in Y} \sum_{w \in N(y) \setminus X} d_X(y) d_{X}(w).
 \end{equation}
If, for some $y \in Y$, $w$ is adjacent to $y$ and not in $X$, then either $w \in Y$, or $d_X(w) = 0$. Thus, the second sum in~\eqref{eq:ExtraPaths2} is really a sum over the edges of $H[Y]$.
\begin{align*}
\sum_{y \in Y} \sum_{w \in N(y) \setminus X} d_X(y) d_{X}(w) & = \sum_{y \in Y} \sum_{w \in N(y) \cap Y}d_X(y) d_{X}(w)\\
&=\sum_{yw \in E(H[Y])} 2d_X(y) d_{X}(w).
\end{align*}
Since $2ab \leq a^2+b^2$ for all integers $a$ and $b$, we have 
\begin{align*}
\sum_{yw \in E(H[Y])} 2d_X(y) d_{X}(w) & \leq \sum_{yw \in E(H[Y])} (d_X(y)^2 + d_{X}(w)^2)\\
& = \sum_{y \in Y}d_Y(y)d_{X}(y)^2\\
& \leq \sum_{y \in Y}d_{\overbar{X}}(y)d_{X}(y)^2.\numberthis\label{eq:ExtraPaths3}
\end{align*}
Substituting the expression in~\eqref{eq:ExtraPaths3} into~\eqref{eq:ExtraPaths2}, recombining and simplifying gives
\begin{align*}
|B_2| &\geq \sum_{y \in Y} \left( d_X(y)\Delta d_{\overbar{X}}(y) - d_{\overbar{X}}(y)d_{X}(y)^2 \right)\\
&= \sum_{y \in Y}  d_X(y)d_{\overbar{X}}(y)(\Delta - d_{X}(y)) \\
&= \sum_{y \in Y} d_X(y)d_{\overbar{X}}(y)^2.\numberthis \label{eq:ExtraPaths4}
\end{align*}
Let us denote by $E(X,Y)$ the set of edges with an endpoint in $X$ and the other in $Y$. If $e \in E(X,Y)$, we denote by $y_e$ the endpoint of $e$ in $Y$. Writing the sum in~\eqref{eq:ExtraPaths4} as a sum over edges we obtain $$|B_2| \geq \sum_{y \in Y} d_X(y)d_{\overbar{X}}(y)^2 =\sum_{e \in E(X,Y)} d_{\overbar{X}}(y_e)^2.$$ 

Now using the Cauchy-Schwarz inequality, we have
\begin{align*}
|B_2| &\geq |E(X,Y)| \left( \frac{\sum_{e \in E(X,Y)} d_{\overbar{X}}(y_e)}{|E(X,Y)|}\right)^2\\
&=\frac{1}{|E(X,Y)|}\left( \sum_{e \in E(X,Y)} d_{\overbar{X}}(y_e)\right)^2\\
 &=  \frac{1}{|E(X,Y)|}\left( \gamma \Delta^3 \right)^2. \numberthis \label{eq:ExtraPathsFinal}
\end{align*}
By Lemma~\ref{lem:BruhnJoosBounds}, we have that $|E(X,Y)| \leq \sd(uv) \leq (2-\alpha - \beta)\Delta^2$.  Substituting this into~\eqref{eq:ExtraPathsFinal} gives that $|B_2| \geq \frac{\gamma^2}{(2-\alpha-\beta)}\Delta^4$ as claimed.
 \end{proofclaim}
 
 Combining Claims~\ref{clm:PB4C4},~\ref{clm:B1} and~\ref{clm:B2} with the fact that $P \leq 2 \Delta^2 \cdot ((2-\alpha - \beta) \Delta^2 - 2\Delta)$, we obtain the desired bound.
 
 \end{proof}

\subsection{Restricting the Set of Interesting Edges}

Let $G$ be a graph with maximum degree $r$ such that for every vertex $u \in V(G)$, the graph induced by the neighbourhood of $u$ has at most $(1-\delta){r \choose 2}$ edges. Theorem~\ref{thm:SparsityApprox} shows that there is some $\gamma > 0$, which increases with the sparsity, such that $G$ is colourable with $(1-\gamma)r$ colours. However given this fact, one need not colour all the vertices. Indeed if $A \subseteq V(G)$ is the set of vertices with degree at least $(1-\gamma)r$, then it suffices to colour $A$. After this, the remaining vertices of $G$ can be coloured greedily without introducing any new colours. In fact, we can repeat this argument to show that we only need to colour the maximum subgraph $F$ of $G$ with minimum degree at least $(1-\gamma)r$ (note that $F$ may be empty). We show that in our application to the strong chromatic index, the graph $F$ thus obtained is even sparser than $G$.

\begin{lemma}\label{lem:Fdensity}
Let $H$ be a graph with maximum degree $\Delta$, and set $G = L^2(H)$. Let $\eta \in [0,0.3]$ be a fixed constant and let $F \subseteq E(H)$ be the maximum set of edges $e$ such that $d_F(e) \geq (2-\eta)\Delta^2$. Finally, for $e\in E(H)$, let $F_{e}$ be the set $F \cap N_G(e)$. If $e \in E(F)$, then $$|E(G[F_{e}])| \leq \left( \frac{31}{6} - \frac{128}{3(10 -3\eta)}+4\eta -\eta^2 \right) \Delta^4.$$
\end{lemma}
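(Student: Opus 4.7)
I would follow the same path-counting template as in the proof of Lemma~\ref{lem:improvedBound}, but applied to $Z := F_e$ rather than $Z := N^s(e)$, and with an additional structural input coming from $e \in F$. Write $e = uv$, $X = N(u) \cup N(v) \setminus \{u,v\}$, $Y = N(X) \setminus (X \cup \{u,v\})$, $\alpha\Delta = |N(u) \cap N(v)|$, $\beta\Delta^2 = |E(H[X])|$, and $\gamma\Delta^3 = \sum_{y \in Y} d_X(y)(\Delta - d_X(y))$. The first observation is that the bound $d^s(e) \le (2-\alpha-\beta)\Delta^2 - 2\Delta$ from Lemma~\ref{lem:BruhnJoosBounds}, combined with $d_F(e) \ge (2-\eta)\Delta^2$, forces $\alpha + \beta \le \eta - 2/\Delta$, and consequently $|N^s(e) \setminus F_e| \le (\eta - \alpha - \beta)\Delta^2 + O(\Delta)$.

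I would then count directed paths $pqrs$ of $H$ with $pq \in F_e$, which number at most $2|F_e|\Delta^2 \le 2(2-\alpha-\beta)\Delta^4$. As in the proof of Lemma~\ref{lem:improvedBound}, one subtracts the paths whose tail avoids $X$ (analogues of $B_1$ and $B_2$) together with four times the relevant $4$-cycles $C_4(X, Y)$, for which the bounds of Claims~\ref{clm:B1}, \ref{clm:B2} and of Lemma~\ref{lem:BruhnJoosBounds} apply; the Cauchy--Schwarz step of Claim~\ref{clm:B2} contributes the quadratic correction $\tfrac{\gamma^2}{2(2-\alpha-\beta)}\Delta^4$. Crucially, I would also subtract those paths $pqrs$ whose final edge $rs$ lies in $N^s(e) \setminus F_e$, contributing an extra saving of order $(\eta - \alpha - \beta)\Delta^4$; this is the term that strengthens the bound beyond what $|E(G[F_e])| \le |E(G[N^s(e)])|$ alone would give, and is needed for small $\eta$, where the raw bound of Lemma~\ref{lem:improvedBound} under $\alpha+\beta \le \eta$ is insufficient. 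Assembling these estimates produces an upper bound on $|E(G[F_e])|/\Delta^4$ of the form $B(\alpha, \beta, \gamma) + O(1/\Delta)$.

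The remaining task is to maximize $B(\alpha, \beta, \gamma)$ over $\alpha, \beta, \gamma \ge 0$ with $\alpha + \beta \le \eta$ and to verify that the maximum is at most $\frac{31}{6} - \frac{128}{3(10-3\eta)} + 4\eta - \eta^2$. I expect the extremizer to sit at $\alpha = 0$, $\beta = \eta$ (by a standard convexity argument in $(\alpha,\beta)$ along the hyperplane $\alpha + \beta = \eta$), with an optimal choice $\gamma = \gamma(\eta)$ that balances the Cauchy--Schwarz subtraction against the $C_4(X,Y)$ lower bound; the denominator $10 - 3\eta$ should arise from this optimization, while the constant $\frac{31}{6}$ and the corrections $4\eta - \eta^2$ come from the first-order terms. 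The hardest step will be this final bookkeeping: correctly propagating the $|N^s(e) \setminus F_e|$ saving through the Cauchy--Schwarz argument so that the closed-form upper bound stated in the lemma emerges cleanly, and confirming that the resulting bound is uniform over the full range $\eta \in [0, 0.3]$.
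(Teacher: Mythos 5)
Your setup is aimed in the right direction (the deduction $\alpha+\beta\le\eta$, and the final optimization with extremizer $\alpha=0$, $\beta=\eta$ producing the $10-3\eta$ denominator, are exactly how the paper finishes), but the mechanism you propose for the extra saving is both unjustified and quantitatively too weak. You plan to redo the path count with $Z=F_e$ and additionally subtract the paths $pqrs$ with $pq\in F_e$ whose last edge lies in $N^s(e)\setminus F_e$, claiming a saving of order $(\eta-\alpha-\beta)\Delta^4$. First, to turn that subtraction into a saving you need a \emph{lower} bound on the number of such paths, i.e.\ you must know that the edges of $N^s(e)\setminus F_e$ carry many $G$-adjacencies into $F_e$; but membership outside $F$ only gives the upper bound $d_F(g)<(2-\eta)\Delta^2$, so no such lower bound follows, and these edges could a priori have almost no neighbours in $F_e$. (Also, with $Z=F_e$ the $-4C_4(X,Y)$ correction of Claim~\ref{clm:PB4C4} is no longer automatic, since the cycles' edges need not lie in $F_e$.) Second, even if granted, a saving of at most $\eta\Delta^4\le 0.3\Delta^4$ cannot reach the stated bound: as $\eta\to 0$ we have $\alpha,\beta\to 0$, and the bound of Lemma~\ref{lem:improvedBound}, after inserting the $C_4(X,Y)$ estimate of Lemma~\ref{lem:BruhnJoosBounds} and taking the worst case $\gamma\approx 0$, is $1.5\Delta^4$, while the target is $\bigl(\tfrac{31}{6}-\tfrac{128}{30}\bigr)\Delta^4=0.9\Delta^4$; a constant-order saving of roughly $0.6\Delta^4$ is required, not an $O(\eta)\Delta^4$ one.

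The missing idea is the structural consequence of $F$-membership that the paper exploits: define the auxiliary graph $C_4(e)$ on vertex set $E(X,Y)$ joining opposite edges of the cycles counted by $C_4(X,Y)$, and observe that $d_G(f)\le 2\Delta^2-d_{C_4(e)}(f)$, so every $f\in F$ satisfies $d_{C_4(e)}(f)\le\eta\Delta^2$. Hence of the $4C_4(X,Y)$ opposite-pair incidences, all but at most $\eta\Delta^2\,|E(X,Y)\cap F|\le\eta(2-\alpha-\beta)\Delta^4$ must sit on edges of $E(X,Y)\setminus F$, whose degrees into $N^s(e)$ are at least their $C_4(e)$-degrees; deleting these edges when passing from $N^s(e)$ to $F_e$ therefore removes at least $4C_4(X,Y)-\eta(2-\alpha-\beta)\Delta^4$ further edges of $G$ (up to the $\tfrac12|E(X,Y)\setminus F|^2$ overlap correction). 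Combined with Lemma~\ref{lem:improvedBound} this gives an effective $-6C_4(X,Y)$ term, and only then does the optimization you sketch (in $x=\beta+\gamma/2$, then $\alpha=0$, $\beta=\eta$) yield $\frac{31}{6}-\frac{128}{3(10-3\eta)}+4\eta-\eta^2$. Without this auxiliary-graph degree argument the proposal does not prove the lemma.
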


\begin{proof}
Let $e$ be an edge $uv$ of $H$ such that $e \in F$. Let $X = N_H(u)\cup N_H(v) \setminus \{u,v\}$ and $Y = N_H(X) \setminus (X \cup \{u,v\})$. We define an auxiliary graph $C_4(e)$ whose vertex set is $E(X,Y)$, and whose edges consist of those pairs $\{f_1,f_2\} \subseteq E(X,Y)$ such that $f_1$ and $f_2$ are opposite edges of a $4$-cycle in $C_4(X,Y)$. For an edge $f \in E(H)$, we have $d_G(f) \leq 2\Delta^2 - d_{C_4(e)}(f)$. If $f$ belongs to $F$, by definition of $F$ we have $(2-\eta)\Delta^2 \leq d_G(f)$. Therefore, for any edge $f \in F$, we have $d_{C_4(e)}(f) \leq \eta \Delta^2$. Note also that $$\sum_{g \in E(X,Y) \setminus F}d_{\sn(e)}(g) \geq \sum_{g \in E(X,Y)\setminus F}d_{C_4(e)}(g)$$ and $$4 C_4(X,Y) = \sum_{f \in E(X,Y)\cap F}d_{C_4(e)}(f) + \sum_{g \in E(X,Y) \setminus F}d_{C_4(e)}(g).$$
Combining these observations we have $$\sum_{g \in E(X,Y) \setminus F}d_{\sn(e)}(g) \geq 4 C_4(X,Y) - \eta |F| \Delta^2.$$
Finally, 
\begin{align*}
|E(G[F_{e}])| &\leq |E(G[\sn(e)])| - \sum_{g \in E(X,Y)\setminus F} d_{\sn(e)}(g) + |E(G[E(X,Y)\setminus F])|\\ 
&\leq |E(G[N^s(e)])| -  4 C_4(X,Y) + \eta |F| \Delta^2 + \frac{1}{2}|E(X,Y)\setminus F|^2. \numberthis \label{eq:newBound}
\end{align*}

Note that $|E(X,Y) \setminus F| \leq |N_G(e) \setminus F|$. Since $e \in F$, by definition of $F$ we have $|N_G(e) \cap F|\geq (2-\eta)\Delta^2$. Thus, using Lemma~\ref{lem:BruhnJoosBounds}, we have $$|E(X,Y) \setminus F| \leq (2-\alpha-\beta) \Delta^2 - (2-\eta)\Delta^2 = (\eta-\alpha-\beta)\Delta^2,$$
where, as usual, $\alpha\Delta = |N(u) \cap N(v)|$ and $\beta\Delta^2 = |E(G[X])|$. We can now bound the last two terms in equation~\eqref{eq:newBound}. 
\begin{align*}
\eta |F| \Delta^2 + \frac{1}{2}|E(X,Y)\setminus F|^2 &\leq \eta |F| \Delta^2 + \frac{1}{2}(\eta-\alpha-\beta)|E(X,Y)\setminus F|\Delta^2\\
&\leq \eta  ( |F| + |E(X,Y)\setminus F|) \Delta^2\\
&\leq \eta  |\sn(e)|\Delta^2\\
&\leq \eta (2-\alpha-\beta)\Delta^4.
\end{align*}
Therefore, by Lemma~\ref{lem:improvedBound} and the above, inequality~\eqref{eq:newBound} becomes $$|E(G[F_{e}])| \leq \left( 2- \alpha - \beta - \frac{\gamma}2 \right) \Delta^4 - \frac{\gamma^2}{2(2-\alpha-\beta)}\Delta^4 + \left( \frac{\gamma}2 - 2 \right) \Delta^3 - 6 C_4(X,Y) + \eta (2-\alpha-\beta)\Delta^4$$
Now, using the expression for $C_4(X,Y)$ from Lemma~\ref{lem:BruhnJoosBounds} gives

\begin{equation}\label{eq:finalBound}
|E(G[F_e])| \leq f(\alpha, \beta, \gamma, \eta)  \Delta^4 + (19-\gamma) \Delta^3,
\end{equation}
where 
\begin{equation}\label{eq:fBound}
f(\alpha, \beta, \gamma, \eta) = 2- \alpha - \beta - \frac{\gamma}2  - \frac{3(2-\alpha - 2\beta - \gamma)^2}{2(2-\alpha)^2}  - \frac{\gamma^2}{2(2-\alpha-\beta)}
 + \eta (2-\alpha-\beta).
\end{equation}

It remains to show that $f(\alpha, \beta, \gamma, \eta) \leq \frac{9}{10} + \eta\left( 4 - \eta - \frac{64}{5(10-3\eta)}\right)$. 
By Lemma~\ref{lem:BruhnJoosBounds} and the definition of $F$, we have $(2-\eta)\Delta^2 \leq d_G(e) \leq (2-\alpha-\beta)\Delta^2$. Thus $\alpha+\beta \leq \eta$. Letting $x = \beta + \frac{\gamma}{2}$, equation~\eqref{eq:fBound} simplifies to
\begin{equation}\label{eq:fBoundx}
f(\alpha, \beta, \gamma, \eta) = f_0(\alpha, \beta, \eta, x) =  2- \alpha - x  - \frac{3(2-\alpha - 2x)^2}{2(2-\alpha)^2}  - \frac{2(x-\beta)^2}{2-\alpha-\beta}
 + \eta (2-\alpha-\beta).
\end{equation}
We first investigate the dependence on $\alpha$. First, note that 
\begin{align*}
\frac{\partial f_0}{\partial \alpha} & = -1 - \frac32 \cdot 2 \cdot (- \frac{2x}{(\alpha-2)^2}) \cdot (1 - \frac{2x}{2-\alpha}) - \frac{2(x-\beta)^2}{(2-\alpha-\beta)^2} - \eta\\
 & = -1 + \frac{6x}{(2-\alpha)^2}\cdot (1 - \frac{2x}{2-\alpha}) - \frac{2(x-\beta)^2}{(2-\alpha-\beta)^2} - \eta.
\end{align*}
For any positive $a$, the function $x \mapsto x \cdot (1-a\cdot x)$ reaches a maximum of $\frac{1}{4a}$ at $x=\frac1{2a}$. Therefore, for all $\alpha$ in the range considered, the term $$\frac{6x}{(2-\alpha)^2} \left( 1 - \frac{2x}{2-\alpha} \right)$$
attains its maximum at $x = \frac{2-\alpha}{4}$. As $\alpha \leq 1$ by definition, we have $$\frac{6x(2-\alpha-2x)}{(2-\alpha)^3} \leq \frac{6\cdot(2-\alpha)\cdot \frac{2-\alpha}8}{(2-\alpha)^3} =\frac{3}{4(2-\alpha)} < 1,$$
 whence $\frac{\partial f}{\partial \alpha} <0$. Thus, defining $f_1(\beta, \eta, x) = f_0(0, \beta, \eta, x)$ we have
 \begin{equation}\label{eq:fboundxAlpha=0}
 f_0(\alpha, \beta, \eta, x) \leq f_1(\beta, \eta, x) = \frac{1}{2} + 2x - \frac{3}{2}x^2 - \frac{2(x-\beta)^2}{2-\beta}
 + \eta (2-\beta).
 \end{equation}
 For fixed $\beta$ and $\eta$, we calculate $$\frac{\partial f_1}{\partial x} = 2-3x-\frac{4(x-\beta)}{(2-\beta)},$$ so one can check that $\frac{\partial f_1}{\partial x}=0$ only when $x = \frac{4+2\beta}{10-3\beta}$. The second derivative $\frac{\partial^2 f_1}{\partial x^2}$ is easily seen to be negative, so $f_1$ attains its maximum at $x=\frac{4+2\beta}{10-3\beta}$. Thus we have $f_1(\beta, \eta, x) \leq f_2(\beta, \eta)$, where $$f_2(\beta, \eta) = f_1\left(\beta, \eta, \frac{4+2\beta}{10-3\beta} \right) = (2-\eta)\beta + \frac{31}{6} - \frac{128}{3(10 -3\beta)} + 2\eta.$$
Now, $$\frac{\partial f_2}{\partial \beta} = 2-\eta - \frac{128}{(10-3\beta)^2}.$$ Since $\beta \leq \eta$, we have $2-\eta - \frac{128}{(10-3\beta)^2} \geq 2-\eta - \frac{128}{(10-3\eta)^2}$ which is positive for $\eta \leq 0.3$. Thus $f_2$ is increasing in $\beta$. Again, since $\beta \leq \eta$, we conclude that $$f_2(\beta, \eta) \leq f_2(\eta, \eta) = 4\eta -\eta^2 + \frac{31}{6} - \frac{128}{3(10 -3\eta)}.$$
\end{proof}
 
This refined sparsity bound combined with our new colouring procedure is enough to prove Theorem~\ref{thm:MainThmStrongEdge}.

\begin{proof}[Proof of Theorem~\ref{thm:MainThmStrongEdge}]
Let $H$ be a graph of sufficiently large maximum degree, and $G = L^2(H)$. Let $\eta = 0.164$ and let $F$ be the set of edges described in Lemma~\ref{lem:Fdensity}. By the argument preceeding Lemma~\ref{lem:Fdensity}, it suffices to colour $G[F]$. By Lemma~\ref{lem:Fdensity}, for each edge $e \in E(H)$ we have $$|E(G[F_e])| \leq \left(4\eta -\eta^2 + \frac{31}{6} - \frac{128}{3(10 -3\eta)}\right)\Delta^4 + (19-\gamma)\Delta^3 < 1.309 \Delta^4,$$ provided $\Delta$ is large enough. Thus, $|E(G[F_e])| \leq (1-\delta){2\Delta^2 \choose 2}$, where $\delta = 0.345$. Note that $\delta = 0.345$ and $\varepsilon=0.0825$ satisfy the conditions of Theorem~\ref{thm:MainThmCorr}, so $G[F]$ is $(1-\varepsilon)2\Delta^2$-colourable. We derive that $H$ admits a strong edge colouring with at most $1.835 \Delta^2$ colours.
\end{proof}

\section{Reed's Conjecture}\label{sec:ReedsConj}

In this section we prove Theorem~\ref{thm:MainThmReeds}, by combining Theorem~\ref{thm:density} and Theorem~\ref{thm:MainThmCorr} with the technique of King and Reed~\cite{KingReedShort}. The key idea in King and Reed~\cite{KingReedShort} is that for any $\varepsilon > 0$, a smallest counterexample to Theorem~\ref{thm:KingReedEpsilon} cannot contain an independent set $S$ which hits every maximal clique. Otherwise one can check that deleting a maximal independent set containing $S$ produces a smaller counterexample. Thus, by the following result, we may deduce that a smallest counterexample has small clique number.

\begin{theorem}\label{thm:King}\emph{\cite{KingIndep}}
Every graph satisfying $\omega(G) > \frac23 (\Delta(G)+1)$ contains an independent set hitting every maximum clique.
\end{theorem}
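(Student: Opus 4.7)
My approach combines a structural analysis of the family of maximum cliques under the density hypothesis with a Haxell-style independent transversal argument.

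First I would establish the \emph{overlap principle}: if maximum cliques $K, K'$ share a vertex $v$, then $v$ is adjacent to all other vertices of $K\cup K'$, so $|K\cup K'|\le \Delta+1$ and hence $|K\cap K'|\ge 2\omega - \Delta - 1$. Under the hypothesis $\omega > \tfrac{2}{3}(\Delta+1)$ this gives $|K\cap K'|>\tfrac{\Delta+1}{3}$, and dually every vertex of a maximum clique has at most $\Delta-\omega+1<\tfrac{\Delta+1}{3}$ neighbours outside that clique. Iterating inclusion--exclusion shows that any two maximum cliques in the same connected component of the ``shares a vertex'' graph do in fact meet (because a connecting pair gives $|K\cap K_1\cap K'|\ge 3\omega-2\Delta-2>0$ inside $K_1$), and that any three cliques with pairwise non-empty intersection have triple intersection of size at least $3\omega-2\Delta-2>0$.

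Next I would partition the maximum cliques of $G$ into \emph{clumps}, the connected components of the ``shares a vertex'' relation, and for each clump $\mathcal{F}$ define the kernel $W(\mathcal{F}):=\bigcap_{K\in\mathcal{F}} K$. Using the overlap principle and a propagation argument I would show $W(\mathcal{F})\neq \emptyset$: assuming $v\in \bigcap_{i\le r}K_i$, any further $K_{r+1}\in\mathcal{F}$ must share at least $2\omega-\Delta-1$ vertices with each $K_i$ reached through the clump, and if $v\notin K_{r+1}$ then $N(v)$ would be forced to contain both the cliques $K_i$ already built \emph{and} the overlap $K_{r+1}\cap K_i$ at some pivot, ultimately violating the bound $|N[v]|\le \Delta+1$. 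Kernels of distinct clumps are automatically disjoint, since a shared vertex would merge the two clumps.

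Finally I would pick one vertex from each kernel to form an independent set meeting every maximum clique. The key observations are that each kernel has at least $2\omega-\Delta-1$ vertices, while each kernel vertex has at most $\Delta-\omega+1$ neighbours outside its own clump (since its within-clump neighbours already fill up $\omega-1$ slots of its degree). Under the hypothesis $\omega > \tfrac{2}{3}(\Delta+1)$ the kernel size dominates the cross-kernel degree, which is the regime in which a Haxell-type independent transversal theorem---specifically a lopsided Lov\'asz local lemma variant or an inductive Haxell-style alternating-path argument---produces an independent transversal. Since each maximum clique contains its clump's kernel, the resulting set hits every maximum clique.

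The main obstacle will be the Helly-type step: pairwise and triple intersection bounds do not extend to arbitrarily many cliques via naive inclusion--exclusion (one gets $5\omega-4\Delta-4$ at the four-clique stage, which the hypothesis does not guarantee to be positive), so propagating common intersection through an entire clump requires a careful argument that leverages the sharp bound $\Delta-\omega+1$ on out-of-clique neighbours at every step rather than Bonferroni-style counting. A secondary subtlety is that the basic Haxell theorem demands $|V_i|\ge 2\Delta$, which is far too strong here; one must therefore invoke a refinement that is sensitive to the small \emph{cross}-part degrees between distinct kernels rather than to the global maximum degree of $G$.
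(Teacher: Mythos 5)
First, note that the paper does not prove this statement at all: it is quoted from King~\cite{KingIndep}, so the comparison here is with King's original argument, whose architecture (components of the clique-intersection graph, their common intersections as ``kernels'', then an independent transversal of the kernels) your proposal correctly reconstructs, along with the correct elementary estimates $|K\cap K'|\ge 2\omega-\Delta-1$, $|K\cap K_1\cap K'|\ge 3\omega-2\Delta-2$ and the bound $\Delta-\omega+1$ on out-of-clique degrees.

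However, both of the steps you flag as obstacles are genuine gaps, and neither is closed by what you propose. For the Helly step, your ``propagation argument'' is only a gesture: as you yourself compute, iterating intersection bounds degrades by $\Delta+1-\omega$ per clique, and the claim that $v\notin K_{r+1}$ forces a violation of $|N[v]|\le\Delta+1$ is not substantiated. The missing ingredient is Hajnal's clique collection lemma, $|\bigcup\mathcal{K}|+|\bigcap\mathcal{K}|\ge 2\omega$ for any family of maximum cliques: once the running intersection is nonempty, the running union lies inside a closed neighbourhood and so has size at most $\Delta+1$, whence Hajnal restores $|\bigcap|\ge 2\omega-\Delta-1$ at every step and the induction (adding one clique of the clump at a time) goes through; without it, nonemptiness and the kernel-size bound $2\omega-\Delta-1$ that your last paragraph relies on are unproved.

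The transversal step is a more serious gap, because the quantitative claim underlying it is false as stated. ``Kernel size dominates cross-kernel degree'' gives only $|W_i|\ge 2\omega-\Delta-1 > \Delta-\omega+1$, a ratio that tends to $1$ as $\omega\downarrow\tfrac{2}{3}(\Delta+1)$; but no Haxell-type or local-lemma theorem produces independent transversals from ``size exceeds degree''. Haxell's theorem (even after discarding within-part edges, i.e.\ the cross-degree-sensitive refinement you invoke) needs parts of size at least twice the maximum cross-degree, which with your bounds requires $\omega\ge\tfrac{3}{4}(\Delta+1)$ and is tight in general by the Szab\'o--Tardos examples; the LLL variant is worse. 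So the range $\tfrac{2}{3}(\Delta+1)<\omega<\tfrac{3}{4}(\Delta+1)$ is not covered by your argument. King's resolution uses sharper, cluster-dependent bounds (writing $U_i=|\bigcup\mathcal{K}_i|$, a kernel vertex of clump $i$ has at most $\Delta+1-U_i$ neighbours outside $\bigcup\mathcal{K}_i$ while $|W_i|\ge 2\omega-U_i$ by Hajnal, and $U_i\ge\omega$), which makes the degree-versus-size comparison inherently \emph{lopsided} -- each vertex against its own part -- and then he proves a new, bespoke lopsided strengthening of Haxell's theorem via the alternating-tree/Aharoni--Berger--Ziv machinery. That transversal theorem is the main technical content of~\cite{KingIndep}, not an off-the-shelf refinement, so citing ``a Haxell-style variant'' does not close the proof.
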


Using Theorem~\ref{thm:density} and Theorem~\ref{thm:SparsityApprox}, we deduce a bound on the chromatic number of these graphs.

\newcounter{DeltaEpsilon}
\setcounter{DeltaEpsilon}{\thei}
\addtocounter{i}{1}
\begin{lemma}\label{lem:epsilon(alpha,delta)}
Let $G$ be a graph of maximum degree $\Delta$, and clique number $\omega = (1 - \alpha)(\Delta +1)$. There exists $\Delta_\theDeltaEpsilon(\varepsilon,\alpha)$ such that if $\Delta > \Delta_\theDeltaEpsilon(\varepsilon,\alpha)$, then $\chi(G) \leq \lceil (1-\varepsilon)(\Delta+1) + \varepsilon \omega \rceil $, provided $\varepsilon \leq  0.3012 \frac{\alpha}2 (1 - 2\varepsilon)^2 - 0.1283 \frac{\alpha^2}{2\sqrt{2}} (1 - 2\varepsilon)^3$.
\end{lemma}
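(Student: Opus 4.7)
The plan is to derive a contradiction by combining Theorem~\ref{thm:density} and Theorem~\ref{thm:SparsityApprox}, applied to a critical subgraph. First rewrite the target bound using $\omega = (1-\alpha)(\Delta+1)$ as $\chi(G) \leq k$, where
\[
k := \lceil (1-\varepsilon)(\Delta+1) + \varepsilon\omega \rceil = \lceil (1-\varepsilon\alpha)(\Delta+1) \rceil.
\]
Suppose for contradiction that $G$ is not $k$-colourable. Under the constant list assignment $L(v) = \{1,\dots,k\}$, the graph $G$ has no $L$-colouring, so pick $G^* \subseteq G$ that is $L$-critical. Any vertex of degree $<k$ in an $L$-critical graph could be coloured last from its $k$-list, so $\delta(G^*) \geq k$, and hence $k \leq \Delta^* \leq \Delta$, where $\Delta^* := \Delta(G^*)$; in particular $\Delta^*$ is as large as needed whenever $\Delta$ is.

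Next I would apply Theorem~\ref{thm:density} to $G^*$ with $\hat\varepsilon := 1 - k/(\Delta^*+1)$ and $\hat\alpha := 1 - \omega(G^*)/(\Delta^*+1)$. Since $k$ is an integer, $k = \lceil (1-\hat\varepsilon)(\Delta^*+1)\rceil$, so $L$ is a valid list assignment for the theorem. A direct expansion gives
\[
(\hat\alpha - 2\hat\varepsilon)(\Delta^*+1) = 2k - \omega(G^*) - \Delta^* - 1 \geq 2k - \omega - \Delta - 1 \geq \alpha(1-2\varepsilon)(\Delta+1),
\]
where the first inequality uses $\omega(G^*) \leq \omega$ and $\Delta^* \leq \Delta$, and the second follows from $\omega = (1-\alpha)(\Delta+1)$ together with $k \geq (1-\varepsilon\alpha)(\Delta+1)$. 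Dividing by $\Delta^*+1 \leq \Delta+1$ yields $\hat\alpha - 2\hat\varepsilon \geq \alpha(1-2\varepsilon) > 0$, so the hypothesis $\hat\varepsilon < \hat\alpha/2$ of Theorem~\ref{thm:density} is satisfied, and the theorem concludes that $G^*$ is $\delta$-sparse with $\delta := \tfrac{1}{2}\alpha^2(1-2\varepsilon)^2$.

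Now I would invoke Theorem~\ref{thm:SparsityApprox} on $G^*$; since $\delta \leq 1/2 < 0.9$ and $\Delta^*$ is large, this gives
\[
\chi_\ell(G^*) \leq (1-\varepsilon'')(\Delta^*+1), \qquad \varepsilon'' := 0.3012\,\delta - 0.1283\,\delta^{3/2}.
\]
Because $G^*$ is $L$-critical, $\chi_\ell(G^*) > k$, so to derive a contradiction it suffices to show $(1-\varepsilon'')(\Delta^*+1) \leq k$, i.e.\ $\varepsilon'' \geq 1 - k/(\Delta^*+1)$. The map $\Delta^* \mapsto 1 - k/(\Delta^*+1)$ is increasing, so its maximum on $[k,\Delta]$ is attained at $\Delta^* = \Delta$ and equals $1 - k/(\Delta+1) \leq \varepsilon\alpha$, using $k \geq (1-\varepsilon\alpha)(\Delta+1)$. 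Hence it is enough to verify $\varepsilon'' \geq \varepsilon\alpha$; substituting $\delta = \tfrac12\alpha^2(1-2\varepsilon)^2$ and dividing by $\alpha$ recovers exactly the lemma's hypothesis
\[
\varepsilon \leq 0.3012\,\tfrac{\alpha}{2}(1-2\varepsilon)^2 - 0.1283\,\tfrac{\alpha^2}{2\sqrt{2}}(1-2\varepsilon)^3,
\]
yielding the contradiction.

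The main obstacle is the parameter bookkeeping between $G$ and the critical subgraph $G^*$: one must check that neither the sparsity gained from Theorem~\ref{thm:density} nor the colouring savings from Theorem~\ref{thm:SparsityApprox} degrade when passing from $\Delta$ to the potentially smaller $\Delta^*$. Both reductions are short monotonicity arguments (with $\Delta^* = \Delta$ as the worst case), and no further probabilistic or combinatorial input beyond the two input theorems is needed.
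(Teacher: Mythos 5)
Your proposal is correct and follows essentially the same route as the paper: reduce to an $L$-critical (sub)graph, apply Theorem~\ref{thm:density} to get $\delta$-sparsity with $\delta=\tfrac12\alpha^2(1-2\varepsilon)^2$, then apply Theorem~\ref{thm:SparsityApprox} and observe that the required inequality $\varepsilon''\ge\varepsilon\alpha$ is exactly the lemma's hypothesis. The only difference is that you carry out explicitly the monotonicity bookkeeping for the parameters of the critical subgraph, which the paper's proof compresses into ``we may first assume that $G$ is a critical graph.''
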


\begin{proof}
Let $k = \lceil (1-\varepsilon)(\Delta+1) + \varepsilon \omega \rceil$. We set $\varepsilon' = \varepsilon\alpha$ so that $k= \lceil (1 - \varepsilon')(\Delta+1) \rceil$. It suffices to show that $G$ is $k$-colourable. To do so, we may first assume that $G$ is a critical graph. Now by Theorem~\ref{thm:density}, we have that $G$ is $\delta$-sparse where $\delta = \frac12 ( \alpha - 2\varepsilon')^2$. By Theorem~\ref{thm:SparsityApprox}, such a graph can be coloured with $(1- \varepsilon')(\Delta+1)$ colours, provided $\varepsilon' \leq 0.3012\delta - 0.1283\delta^{3/2}$. This simplifies to $\varepsilon \leq  0.3012 \frac{\alpha}2 (1 - 2\varepsilon)^2 - 0.1283 \frac{\alpha^2}{2\sqrt{2}} (1 - 2\varepsilon)^3$, which is satisfied by assumption.
\end{proof}

\begin{figure}

\centering
\begin{tabular}{cccccccc}
\toprule
 $\alpha$ & $\varepsilon$ & &  $\alpha$ & $\varepsilon$  & & $\alpha$ & $\varepsilon$ \\
 \midrule
0.02 & 0.0029 & & 0.32 & 0.0375 & & 0.62 & 0.0603\\
0.04 & 0.0058 & & 0.34 & 0.0393 & & 0.64 & 0.0615\\
0.06 & 0.0085 & & 0.36 & 0.0411 & & 0.66 & 0.0627\\
0.08 & 0.0112 & & 0.38 & 0.0428 & & 0.68 & 0.0639\\
0.10 & 0.0138 & & 0.40 & 0.0445 & & 0.70 & 0.0651\\
0.12 & 0.0163 & & 0.42 & 0.0461 & & 0.72 & 0.0662\\
0.14 & 0.0187 & & 0.44 & 0.0477 & & 0.74 & 0.0673\\
0.16 & 0.0210 & & 0.46 & 0.0492 & & 0.76 & 0.0684\\
0.18 & 0.0233 & & 0.48 & 0.0507 & & 0.78 & 0.0694\\
0.20 & 0.0255 & & 0.50 & 0.0522 & & 0.80 & 0.0704\\
0.22 & 0.0277 & & 0.52 & 0.0536 & & 0.82 & 0.0715\\
0.24 & 0.0297 & & 0.54 & 0.0550 & & 0.84 & 0.0724\\
0.26 & 0.0318 & & 0.56 & 0.0564 & & 0.86 & 0.0734\\
0.28 & 0.0337 & & 0.58 & 0.0577 & & 0.88 & 0.0743\\
0.30 & 0.0356 & & 0.60 & 0.0590 & & 0.90 & 0.0752\\
 \bottomrule
\end{tabular}

\caption*{Table 1: Values of $\alpha$ and $\varepsilon$ which satisfy the conditions in Lemma~\ref{lem:epsilon(alpha,delta)}.}
\label{tableOmega}
\end{figure}

Since the condition in Lemma~\ref{lem:epsilon(alpha,delta)} is somewhat involved, Table 1 lists values of $\alpha$ and $\varepsilon$ which satisfy it. Before proving Theorem~\ref{thm:MainThmReeds}, we mention the following result which we require.

\newcounter{DeltaReed}
\setcounter{DeltaReed}{\thei}
\addtocounter{i}{1}
\begin{theorem}\emph{\cite{Reeds}}\label{th:reedo}
There is a constant $\Delta_\theDeltaReed$ such that any graph $G$ with $\Delta(G)\geq \Delta_\theDeltaReed$ and $\omega(G) \geq (1-\frac1{7\cdot 10^7}) \Delta(G)$ satisfies $\chi(G)\leq \frac{\Delta(G)+\omega(G)+1}2$.
\end{theorem}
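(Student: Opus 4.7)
Let $\varepsilon_0 := 1/(7\cdot 10^7)$ and suppose $\omega \geq (1-\varepsilon_0)\Delta$. Write $t := \lfloor (\Delta+\omega+1)/2 \rfloor$, so $t \geq \omega$ and the slack $t-\omega$ is at least $\lfloor (\Delta-\omega+1)/2 \rfloor$; the aim is to exhibit a proper $t$-colouring. Because $\omega$ is within $\varepsilon_0\Delta$ of $\Delta$, the neighbourhood of every vertex in a maximum clique is extremely dense, so the sparsity-based probabilistic tools of Sections~\ref{sec:density}--\ref{sec:sparsity} give no purchase. A purely structural approach, pinned to a maximum clique, is required.

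The plan is as follows. Fix a maximum clique $K$ of size $\omega$. By maximality, each vertex of $K$ has at most $\Delta-\omega+1 \leq \varepsilon_0\Delta+1$ external neighbours, so the bipartite subgraph between $K$ and $V(G)\setminus K$ has at most $\omega(\varepsilon_0\Delta+1)$ edges. Partition $V(G)\setminus K$ into \emph{heavy} vertices (many neighbours in $K$) and \emph{light} vertices (few). An averaging argument over the bipartite edge count bounds the number of heavy vertices by a constant multiple of $\varepsilon_0\Delta$, while each light vertex has, by construction, many non-neighbours in $K$.

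Next I would colour $K$ with the palette $\{1,\dots,\omega\}$ and extend to heavy vertices by \emph{re-using} these same colours via a Hall-type matching: to each heavy vertex $v$ associate an injective non-neighbour $\phi(v)\in K$, and colour $v$ with the colour assigned to $\phi(v)$. Hall's condition would be verified by the same edge count: each $u\in K$ has at most $\varepsilon_0\Delta+1$ non-neighbours among the heavy vertices, so any subset $S$ of heavy vertices contains at most $(\varepsilon_0\Delta+1)|N_K(S)|$ pairs, forcing $|N_K(S)|$ to grow with $|S|$. Finally, each light vertex $v$ has at least a positive constant fraction of $\omega$ non-neighbours in $K$; the colours assigned to these non-neighbours are all available at $v$, and together with the $t-\omega$ unused colours they form a palette strictly larger than the number of already-coloured neighbours of $v$. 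Greedy completion then finishes the colouring.

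The main obstacle is the tight budgeting of the slack $t-\omega$, which is only of order $\varepsilon_0\Delta$. The heavy/light threshold, the Hall-defect allowance, and the light-vertex palette lower bound each must consume only a constant-factor share of this slack. The constant $7\cdot 10^7$ appears precisely because these three estimates chain multiplicatively, and tightening the matching argument for heavy vertices that share many common non-neighbours in $K$ (a local bottleneck that requires a separate iterative reduction) is the delicate step that forces such a small $\varepsilon_0$.
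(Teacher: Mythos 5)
This statement is not proved in the paper at all: it is quoted verbatim from Reed \cite{Reeds}, whose proof is a long global argument (a decomposition of the graph into dense sets, independent sets hitting the maximum cliques, and probabilistic colouring of the sparse remainder), so your sketch is an attempt at a genuinely new, purely local proof --- and it has a fatal structural gap. Everything in your plan is pinned to one maximum clique $K$, but the theorem is a global statement. The vertices with few or no neighbours in $K$, which may constitute almost all of $G$, induce an arbitrary graph of maximum degree at most $\Delta$ whose clique number can again be as large as $\omega$, realised by maximum cliques far away from $K$; colouring that part with only about $\frac{\Delta+\omega+1}{2} < \Delta+1$ colours is exactly the original problem over again, and your plan disposes of it with ``greedy completion'', which fails because a vertex can have up to $\Delta$ previously coloured neighbours while the palette is smaller than $\Delta+1$. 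Even for light vertices adjacent to $K$, the claim that the colours of their non-neighbours in $K$ are ``available'' is wrong as stated: those colours may already appear on neighbours outside $K$.

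The local step also has a hole you half-acknowledge: Hall's condition for matching heavy vertices to non-neighbours in $K$ can simply fail. Two heavy vertices each adjacent to all of $K$ except the same single vertex $u$ give a set $S$ with $|S|=2$ whose non-neighbourhood in $K$ is $\{u\}$, and such configurations are fully consistent with $\omega(G)=\omega$ (they merely create further maximum cliques). Handling this is not a matter of tightening constants; it is precisely where Reed's actual argument needs independent sets meeting every maximum clique and a careful analysis of the dense parts of the graph. As it stands, the proposal does not prove the theorem, and the paper itself offers no proof to fall back on --- it uses the result as a black box.
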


We are now ready to prove Theorem~\ref{thm:MainThmReeds}.

\begin{proof}[Proof of Theorem~\ref{thm:MainThmReeds}]
Let $\Delta_\theDeltamain= \max (1.4\cdot 10^8 \cdot \Delta_\theDeltaSparsity, \Delta_\theDeltaReed)$ and let $G$ be a graph with $\Delta(G) > \Delta_\theDeltamain$. First note that if $\omega(G) > \Delta(G)-2\Delta_\theDeltaSparsity$, then by the choice of $\Delta_\theDeltamain$ we have $\omega(G) \geq (1-\frac1{7 \cdot 10^7})\cdot \Delta(G)$. By Theorem~\ref{th:reedo}, the conclusion strongly holds. Therefore, from now on we can assume that $\omega(G) \leq \Delta(G)-2\Delta_\theDeltaSparsity$.

If $G$ has clique number $\omega(G) \leq \frac{2}{3}(\Delta(G)+1)$, then we set $G'' = G$ in what follows. Otherwise, if $\omega(G) > \frac{2}{3}(\Delta(G)+1)$, then by Theorem~\ref{thm:King} there is an independent set $S \subseteq V(G)$ which contains a vertex of every clique of size $\omega(G)$. Extend $S$ to a maximal independent set $S'$ and set $G' = G- S'$. Since $S'$ is maximal, $\Delta(G') \leq \Delta(G)-1$, and since $S'$ contains a vertex in every maximal clique, $\omega(G') = \omega(G) -1$. Furthermore, $\chi(G')\geq \chi(G)-1$ since $S$ is an independent set. While $\omega(G') > \frac23 (\Delta(G') + 1)$, we repeatedly apply this reduction until we obtain a graph $G''$ with $\omega(G'') \leq \frac {2}{3}(\Delta(G'')+1)$. Since the clique number decreases by precisely one each time, the process terminates after $p$ steps, where $p \leq \omega(G)$. 

Note that $\Delta(G'') \leq \Delta(G)-p$, $\omega(G'') = \omega(G) -p$ and $\chi(G'')\geq \chi(G)-p$. If $\Delta(G'') > \Delta_\theDeltaSparsity$, then Lemma~\ref{lem:epsilon(alpha,delta)} implies that $\chi(G'') \leq \frac{25}{26}\Delta(G'') + \frac{1}{26}\omega(G'')$ provided $\frac{1}{26} \leq  0.3012 \frac{\alpha}2 (1 - \frac{2}{26})^2 - 0.1283 \frac{\alpha^2}{2\sqrt{2}} (1 - \frac{2}{26})^3$. This is easily seen to hold for all $1/3 \leq \alpha \leq 1$. We deduce that $\chi(G) \leq \frac{25}{26}\Delta(G'') + \frac{1}{26}\omega(G'') + p \leq \frac{25}{26}\Delta(G) + \frac{1}{26} \omega(G)$.

Thus we may suppose that $\Delta(G'') < \Delta_\theDeltaSparsity$. In this case, we have
\begin{align*} 
\chi(G) &\leq  \Delta_\theDeltaSparsity + p \\ 
&\leq \Delta_\theDeltaSparsity+\omega(G) \\
&\leq \Delta_\theDeltaSparsity + (1-\varepsilon)\cdot \omega(G)+ \varepsilon \cdot \omega(G).
\end{align*}
Finally, by the assumption on $\omega(G)$, we have
\begin{align*} 
\chi(G) &\leq \Delta_\theDeltaSparsity + (1-\varepsilon)\cdot (\Delta(G)-2 \Delta_\theDeltaSparsity)+ \varepsilon \cdot \omega(G)\\
&= (1-\varepsilon)\cdot \Delta(G)+ \varepsilon \cdot \omega(G)+(2\varepsilon -1)\cdot \Delta_\theDeltaSparsity\\
&\leq (1-\varepsilon)\cdot \Delta(G)+ \varepsilon \cdot \omega(G),
\end{align*}
hence the theorem holds.

\end{proof}

\section{Acknowledgements}
Most of this work was done while the first author was a postdoc and the second author was a visiting student at the University of Waterloo.
\bibliographystyle{plain}
\bibliography{references}
\end{document}